\renewcommand{\P}{\textbf{P}}
\newcommand{\C}{\mathscr{C}}
\newcommand{\T}{\textbf{T}}
\newcommand{\U}{\textbf{U}}
\newcommand{\D}{\textbf{D}}
\newcommand{\I}{\textbf{I}}
\newcommand{\Z}{\textbf{Z}}
\newcommand{\X}{\textbf{X}}
\newcommand{\CC}{\textbf{C}}
\newcommand{\TT}{\mathcal{T}}
\renewcommand*\env@matrix[1][*\c@MaxMatrixCols c]{%
  \hskip -\arraycolsep
  \let\@ifnextchar\new@ifnextchar
  \array{#1}}
\newtheorem{theorem}{Theorem}[section]
\newtheorem{corollary}[theorem]{Corollary}
\newtheorem{proposition}[theorem]{Proposition}
\newtheorem{lemma}[theorem]{Lemma}
\theoremstyle{definition}
\newtheorem{definition}[theorem]{Definition}
\newtheorem{remark}[theorem]{Remark}
\newtheorem{example}[theorem]{Example}
\newtheorem{question}[theorem]{Question}
\title{Dimension of unicycle posets}
\author[A.~Abram]{Antoine~Abram}
\address[Antoine Abram]{
Mathématiques dicrètes\\
Bâtiment B37\\
Quartier Polytech 1\\
Allée de la Découverte 12\\
4000\\ Liège\\
Belgique}
\email{abram.antoine@uliege.be}
\urladdr{\url{https://antoineabram.codeberg.page/}}
\author[A.~Segovia]{Adrien~Segovia}
\address[Adrien Segovia]{
Laboratoire d’Algèbre, de Combinatoire et d’Informatique Mathématique (LACIM)\\
Universit\'e du Qu\'ebec \`a Montr\'eal\\
CP 8888 Succ. Centre-Ville\\
Montr\'eal, Qu\'ebec, H3C 3P8\\ Canada}
\email{segovia.adrien@courrier.uqam.ca}
\urladdr{\url{https://sites.google.com/view/adriensegovia}}
\begin{document}

\begin{abstract}
Motivated by the study of the dimension of random posets, it was conjectured by Bollob\'as and Brightwell in 1997
that if $P$ is a finite poset whose cover graph contains at most one cycle then its order dimension is at most $3$. In this paper we prove this conjecture by giving a constructive proof with explicit triplets of linear extensions realizing such posets.
\end{abstract}

\maketitle

\hypersetup{linkcolor=black}
\tableofcontents

\section{Introduction}

Let $(\mathbb{R}^n,\leq)$ be the poset defined by $(x_1,\dots,x_n)\leq (y_1,\dots,y_n)$ if $x_i\leq y_i$ for every $i\in [n]$. The (order) \emph{dimension} of a poset $\P$ is the smallest integer $n$ such that $\P$ is isomorphic to a subposet of $(\mathbb{R}^n,\leq)$. Equivalently, this is the minimum number of linear extensions whose intersection is $\P$, where the intersection is the poset on the set $\P$ whose relations are $x\leq y$ if in each of the chosen linear extensions we have $x\leq y$. A list of linear extensions whose intersection is $\P$ is called a \emph{realizer} of $\P$.
Thus $\dim(\P)$ is the smallest size of a realizer of $\P$. This notion was first introduced in \cite{DushnikMiller} and since then was extensively studied (see \cite{trotter2002combinatorics} for a survey).

The \emph{cover graph} of a poset is the graph formed by its undirected Hasse diagram.
A poset is a \emph{tree poset}, a \emph{cycle poset}, or a \emph{unicycle poset}, if their cover graphs are respectively a tree, a cycle, or is connected and contains a unique cycle.

The only posets of dimension $1$ are the chains.
The \textit{planar} lattices, which correspond to the bounded planar posets, are of dimension $2$ \cite{BakerDim2}.
But there exist planar posets of any dimension \cite{KELLY1981135}.
It is known that the tree posets and all planar posets with a minimum element are of dimension at most $3$ \cite{TROTTERplanarTrees}.
In general, knowing the dimension of a given poset is difficult; it is $NP$-complete to determine if a given poset $\P$ is of dimension $n$, for any $n\geq 3$ \cite{YannakakisNPcomplet}, even if $\P$ is of height $2$ \cite{FelsnerNPhauteur2}.

Shortly after the death of Paul Erd\H{o}s in $1996$, two volumes were published with papers by experts on the mathematics Erd\H{o}s was interested in \cite{GrahamNesetril1997VolI,GrahamNesetril1997VolII}. Our main result is a proof of a conjecture of Bollob\'as and Brightwell that appears in one of these papers.

\begin{theorem}[{\cite[Conjecture 1.1]{bollobas1997dimension}}] \label{thmprincipal}
The dimension of a unicycle poset is at most $3$.   
\end{theorem}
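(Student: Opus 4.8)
The plan is to produce, for every unicycle poset $\P$, three explicit linear extensions $L_1,L_2,L_3$ of $\P$ whose intersection is $\P$; equivalently, three linear extensions such that every pair $\{x,y\}$ that is incomparable in $\P$ is \emph{reversed}, i.e.\ occurs in both orders among the $L_i$. Write the cover graph of $\P$ as its unique cycle $C$ together with (possibly one-point) rooted trees $T_v$ attached along the vertices $v$ of $C$, and record the up/down pattern of the oriented edges of $C$: going once around, $C$ has local maxima (``peaks'') $p_1,\dots,p_m$ and local minima (``valleys'') $q_1,\dots,q_m$ in alternating cyclic order, joined by monotone arcs $A_1,D_1,A_2,D_2,\dots$ (maximal chains of $C$), where $A_i$ climbs from $q_i$ to $p_i$ and $D_i$ descends from $p_i$ to $q_{i+1}$. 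The comparabilities coming from $C$ are easy to list: inside an arc they are the chain order; two elements other than $p_i$ lying in the two arcs meeting at a peak $p_i$ are both below $p_i$ but incomparable to each other; two elements other than $q_i$ in the two arcs meeting at a valley $q_i$ are both above $q_i$ but incomparable; and elements in non-adjacent arcs are incomparable. The pendant trees only add comparabilities ``hanging off'' a single cycle vertex.

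I would treat the pendant trees as the easy part. The bound of $3$ for tree posets is proved constructively in \cite{TROTTERplanarTrees}, and the same style of argument (a depth-first ``peeling'' of the tree) produces a $3$-realizer in which one retains enough freedom about where a chosen root vertex and its ancestors sit; this flexibility is exactly what lets one graft each $T_v$ into the three extensions once the cycle has been handled. So the heart of the matter is a cycle poset (all $T_v$ single points). I would first dispose of the degenerate case $m=1$, where $C$ is just two chains between a global minimum $q_1$ and global maximum $p_1$ of $C$, a planar lattice, hence already of dimension at most $2$; the genuinely new work is for $m\ge 2$, where $C$ has neither a top nor a bottom.

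For $m\ge 2$, the strategy is to build each $L_i$ by ``cutting'' the cycle at a chosen site: a linear extension of $\P$ obtained by unrolling $C$ starting just after one valley (or peak), so that, seen on the cover graph, $L_i$ behaves like a linear extension of the tree poset $C$ minus one edge, with the pendant trees woven in beside their roots. One then picks the (at most three) cutting sites and the local orderings at each peak and valley so that the three extensions jointly reverse every incomparable pair. Concretely I expect two ``opposite'' unrollings to reverse all peak-pairs and valley-pairs of the above types at every peak and valley at once — a single linear extension can be made to put the climbing arc before the descending arc at every peak and dually at every valley, and the other unrolling does the reverse — while the third extension, together with the remaining freedom in the first two, is spent reversing the ``far apart'' pairs and the pairs living in distinct pendant trees. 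The verification then splits by type of incomparable pair: pairs inside one tree or between two trees (from the tree construction), pairs under a common peak or over a common valley of $C$ (from the opposite unrollings), and pairs in non-adjacent arcs or otherwise far on $C$ (from the leftover degrees of freedom).

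The main obstacle I anticipate is global consistency: when $m\ge 3$ a single extension must correctly treat several peaks and valleys simultaneously, so the orderings forced at the different sites cannot be chosen independently, and one must exhibit honest linear extensions of $\P$ realizing the desired local pattern at \emph{all} sites at once while still reversing the long-range pairs. This is where a careful case analysis — on the arrangement of peaks and valleys and on small values of $m$ — together with careful bookkeeping for the insertion of the pendant trees, will be required. A secondary subtlety is checking that reversing a pair at one site does not accidentally fail to reverse a pair at another, i.e.\ that the three chosen linear orders intersect in exactly $\P$ and nothing larger; this is a finite but delicate check once the three extensions are written down explicitly.
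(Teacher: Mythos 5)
Your overall architecture coincides with the paper's: decompose the unicycle poset as a cycle with rooted trees grafted on its vertices, observe that the cycle is a crown (your peaks and valleys are the maximal and minimal elements $z_i,x_i$) with subdivided edges, realize the crown by ``unrollings'' in two directions plus a third extension, and weave the trees in beside their roots. However, what you have written is a plan, not a proof: the three linear extensions are never exhibited, and the two places where you defer the work are precisely where the entire technical content of the theorem lives. First, the assertion that the tree part is ``the easy part'' because Trotter's construction ``retains enough freedom about where a chosen root vertex and its ancestors sit'' is unsubstantiated and is in fact the key lemma. One needs a realizer of each rooted tree $(\T_v,r_v)$ in which the \emph{three} extensions split $\T_v$ around $r_v$ in three prescribed, mutually compatible patterns (in the paper's notation, $\U^-D_1\mid r_v\mid \U^+$, then $\D^-\mid r_v\mid U_1\D^+$, then $D_2\mid r_v\mid U_2$, \cref{prop:Hardcore} and \cref{cor:extlinarbre}); these patterns must then be matched, vertex by vertex and edge by edge, to the positions the cycle realizer forces on $r_v$ in each of the three extensions. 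Nothing in the generic statement ``trees have dimension $3$'' guarantees such a realizer exists, and constructing it requires a genuine induction.

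Second, the claim that ``two opposite unrollings reverse all peak-pairs and valley-pairs at every peak and valley at once'' cannot be literally true: reversing the pair $\{x_i,x_{i+1}\}$ at every peak simultaneously would require a cyclically decreasing sequence, and the pair straddling the cut point must be handled by the third extension together with an asymmetric placement of $x_1$, $z_1$, $z_n$ (this is exactly why $\C_n$ has dimension $3$ for $n\ge 3$; compare \cref{Lem:ExtLinCouronne}). Consequently the ``leftover degrees of freedom'' you propose to spend on long-range pairs and on pairs between distinct pendant trees are already heavily constrained, and the global-consistency obstacle you flag in your final paragraph --- choosing cut sites, local orderings, and tree insertions so that one honest linear extension realizes the right pattern at all sites at once --- is not a secondary bookkeeping issue but the theorem itself. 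The paper resolves it with several pages of explicit formulas ($A_p$, $B_p$, $C_p$ for the subdivided edges, $X_i^{\pm}$, $Z_i^{\pm}$, $\I^{\pm}$ for the grafted trees) and case-by-case verification culminating in \cref{conjFinal}; until you write down your three extensions explicitly and check the incomparabilities, the proof is incomplete.
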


This conjecture is motivated by the study of the dimension of \textit{random posets}. Let $[n]:=\{1,\dots,n\}$ with its usual order denoted by $\preccurlyeq$.
A random simple graph $G(n,p)$ on vertices $[n]$ is generated by creating an edge $\{i,j\}$ with probability $p_n$, for any $i\neq j$ independently.
In this model, the probability $p_n$ depends on $n$.
We orient the edges of $G(n,p)$: any edge $\{i,j\}$ with $i\preccurlyeq j$ is directed from $i$ to $j$. A \emph{random poset} $(\P(n,p),\leq)$ is obtained from the directed version of $G(n,p)$ by taking its transitive closure.
It is well-known for random graphs \cite[Theorem 5e]{ErdosRenyi1960}, and pointed out in \cite{bollobas1997dimension}, that when there exists $0\leq c<1$ such that $\lim(p(n) \,n)=c$, then as $n$ goes to infinity $\P(n,p)$ is almost surely a disjoint union of tree posets and unicycle posets. Thus Theorem \ref{thmprincipal} implies that in this case we have almost surely $\dim(\P(n,p))\leq 3$.

We now list a few results that gave upper bounds on the dimension of unicycle posets.
First, from a special case of a result of Trotter \cite{TROTTERproblems}, as mentioned in \cite{bollobas1997dimension}, the dimension of a unicycle poset is at most 5.
In \cite{felsner2015dimension}, it is proved that if a poset has an \emph{outerplanar} cover graph, which means that its cover graph is planar and all its vertices belong to its \mbox{unbounded} face, then its dimension is at most $4$.
Since a unicycle poset has an \mbox{outerplanar} cover graph, it follows that it is of dimension at most $4$. 
More recently, Gao and Kumar gave in \cite{gao2025} a short proof, provided by Brightwell via personal communication, of the fact that the dimension of a unicycle poset is at most 4.

We start with some basics in Section \ref{notation}, and state in Section \ref{sectionUnicycle} that a unicycle poset is a cycle poset with \textit{grafted trees} on each of its vertices (Proposition \ref{prop:unicycleCrowns}). 
We give in Section \ref{sectionCrowns} a specific realizer of the \textit{crowns}, which are the cycle posets with only minimal and maximal elements. In Section \ref{sectionTrees} we give a specific realizer for the tree posets. In particular, we recover the result that the tree posets are of dimension at most $3$.
From these, we deduce realizers for a cycle poset with grafted trees on its vertices that are neither minimal nor maximal elements (Section \ref{sectionTreesonCovers}), and realizers for a crown with grafted trees on any of its vertices (Section \ref{sectionTreesonVertices}).
We finally combine the graftings in order to build a realizer of any unicycle poset in Section \ref{sectionFinale}. These realizers have size $3$, proving Theorem \ref{thmprincipal}\footnote{One can find SageMath code based on the results of this article here: \url{https://github.com/AntoineAbram/Dimension-of-Unicycle-Posets}}.

\section{Acknowledgments} 

We are very thankful to Nathan Chapelier-Laget for numerous discussions on this project. We thank Samuele Giraudo and Hugh Thomas for their careful reading and proof-checking. We thank Clément Chenevière for earlier discussions.
Finally, we want to thank Oliver Pechenik for presenting the conjecture at the BIRS workshop on Lattice theory at Banff in January 2025, and Jane Gao for providing the reference of the conjecture.

\section{Basics}\label{notation}

We denote $[n]:=\{1,2,\dots,n\}$. We denote by $\P^{op}$ the dual poset of $\P$. A poset is \emph{planar} if there exists a planar drawing without crossings of its Hasse diagram, where edges go upward from smaller to bigger elements. A \emph{linear order} of a poset is any total order of the elements of the poset.
A \emph{linear extension} of a poset $(\P, \leq)$ is a linear order $ \preccurlyeq$ on $\P$ such that for all relations $a\leq b$, we have $a \preccurlyeq b$.
For $x,z\in \P$, a cover relation of $\P$ is denoted by $x\lessdot z$. A \emph{saturated chain} is a chain $x_1\lessdot \dots \lessdot x_k$ made of cover relations.
We mostly use lowercase letters for elements, capital letters for linear extensions and bold capital letters for posets or subsets of a poset (\textit{i.e.} $U$ is a linear extension of $\U$).
To avoid using unnecessary symbols, we denote the linear orders and linear extensions as words; for example we write $a\,c\,b\,d$ for $a<c<b<d$. Thus we will use the terminology of words, for example $a\,b\,d$ is a subword of $a\,c\,b\,d$, and $a$ is to the left of $b$ in $a\,c\,b\,d$.
We use the notations $\prod_{i=1}^k a_i:=a_1a_2\cdots a_k$ and $\prod_{i=k}^1 a_i:=a_k a_{k-1}\cdots a_1$.
For two elements, $a\sim b$ means $a\leq b$ or $a\geq b$, and if it is not the case we write $a\not\sim b$ and say that $a$ and $b$ are \emph{incomparable}.
We also use this notation for two sets $\mathbf{X}$ and $\mathbf{Y}$: by writing $\mathbf{X}\not\sim \mathbf{Y}$ we mean that every element of $\mathbf{X}$ is incomparable to every element of $\mathbf{Y}$.

\begin{figure}
    \begin{subfigure}{.35\textwidth}
        \centering  
\begin{tikzpicture}
        \begin{scope}[scale=0.35]
            \draw (-3,-5) -- (3,5) -- (-3,5) -- (3,-5) -- (-3,-5);
            \draw (1,1.67) -- (4,1.67) -- (3,5);
            \draw (-1,-1.67) -- (-4,-1.67) -- (-3,-5);
            \draw (2.7,2.65) node{$\U^-_a$};
            \draw (0,0) node{$\bullet$};
            \draw (0.2,0) node[right]{$a$};
            \draw (0,3) node{$\U^+_a$};
            \draw (0,-3) node{$\D^-_a$};
            \draw (-2.7,-2.65) node{$\D^+_a$};
            \draw [decorate,decoration={brace,amplitude=7pt,mirror,raise=2ex}]
  (3.5,0.6) -- (3.5,5) node[midway,xshift=2.5em,yshift=0em]{$\U_a$};
  \draw [decorate,decoration={brace,amplitude=7pt, mirror,raise=2ex}]
  (-3.5,-0.6) -- (-3.5,-5) node[midway,xshift=-2.5em,yshift=0em]{$\D_a$};
        \end{scope}    
    \end{tikzpicture}
        \caption{A tree poset $\T$ with $a\in \T$. 
        \label{fig:simpletree}}        
    \end{subfigure}
    ~
    \begin{subfigure}{.6\textwidth}
        \centering
        \begin{tikzpicture}[scale=0.516]
            \draw (0,0)--(1,2)--(0,4)--(-1,2)--(0,0); 
            \draw (0,0) node [below]{$x$};
            \draw (0,4) node [above]{$z$};
            \draw (1,2) node [right]{$b$};
            \draw (-1,2) node [left]{$a$};
            \draw (0,0) node {$\bullet$};
            \draw (0,4) node {$\bullet$};
            \draw (1,2) node {$\bullet$};
            \draw (-1,2) node {$\bullet$};
\begin{scope}[xshift= 3cm,scale=1]
\draw (8.25,1)--(9,0)--(9,4)--(12,0)--(12,4)--(0,0)--(0,4)--(3,0)--(3,4)--(6,0)--(6,4)--(6.75,3);
            \draw[dotted, thick] (8.25,1)--(6.75,3);
            
            \draw (7.3,-0.1) node [below]{\footnotesize$\cdots$};
            \draw (7.3,5) node [below]{\footnotesize$\cdots$};
            
            \draw (0,0) node [below]{$x_1$};
            \draw (3,0) node [below]{$x_2$};
            \draw (6,0) node [below]{$x_3$};
            \draw (9,0) node [below]{$x_{n-1}$};
            \draw (12,0) node [below]{$x_n$};
            \draw (0,4) node [above]{$z_1$};
            \draw (3,4) node [above]{$z_2$};
            \draw (6,4) node [above]{$z_3$};
            \draw (9,4) node [above]{$z_{n-1}$};
            \draw (12,4) node [above]{$z_n$};
            \draw (0,0) node {$\bullet$};
            \draw (3,0) node {$\bullet$};
            \draw (6,0) node {$\bullet$};
            \draw (9,0) node {$\bullet$};
            \draw (12,0) node {$\bullet$};
            \draw (0,4) node {$\bullet$};
            \draw (3,4) node {$\bullet$};
            \draw (6,4) node {$\bullet$};
            \draw (9,4) node {$\bullet$};
            \draw (12,4) node {$\bullet$};
            
            \draw (0,3) node [left]{\textcolor{orange}{\scriptsize 1}};
            \draw (0.95,3) node [right]{\textcolor{orange}{\scriptsize 2}};
            \draw (3,3) node [left]{\textcolor{orange}{\scriptsize 3}};
            \draw (3.95,3) node [right]{\textcolor{orange}{\scriptsize 4}};
            \draw (6,3) node [left]{\textcolor{orange}{\scriptsize 5}};
            \draw (8.9,1) node [right]{\textcolor{orange}{\scriptsize$2n-3$}};
            \draw (9.9,2.8) node [right]{\textcolor{orange}{\scriptsize$2n-2$}};
            \draw (12,1) node [right]{\textcolor{orange}{\scriptsize$2n-1$}};
            \draw (8,3.1) node {\textcolor{orange}{\scriptsize$2n$}};
\end{scope}
        \end{tikzpicture}
        \caption{The crowns $\C_1$ and $\C_n$ for $n\geq 2$.
        The orange edge labelling is the one explained in Definition \ref{def:crown}.\label{fig:crown}}
    \end{subfigure}
    \caption{\label{fig:treeandcrown}}
\end{figure}

Let $\T$ be a tree poset and $a\in \T$. See Figure \ref{fig:simpletree} and Example \ref{ex:treeslinear} for the way we draw the trees and illustrations of the following definitions.
For any $x\in \T\setminus \{a\}$ there exists a unique non-empty path in the Hasse-diagram of $\T$ from $a$ to $x$.
Denote $\U_a$ the set of elements $x\in \T\setminus \{a\}$ such that the path from $a$ to $x$ starts with an up-step, meaning an element bigger than $a$, and similarly denote $\D_a$ the set of elements that can be reached from $a$ by starting with a down-step.
We have $\T\setminus\{a\} = \U_a \sqcup \D_a$.
Denote by $\U_a^{+} \subseteq \U_a$ the subset of the elements of $\U_a$ that are bigger than $a$, and $\U_a^{-}:=\U_a\setminus (\U_a^{+})$ the complement of $\U_a^+$ in $\U_a$.
Similarly, denote by $\D_a^{-} \subseteq \D_a$ the subset of the elements of $\D_a$ that are smaller than $a$, and $\D_a^{+}:=\D_a\setminus (\D_a^{-})$ the complement of $\D_a^-$ in $\D_a$.
We have $\U_a = \U_a^{+} \sqcup \U_a^{-}$ and $\D_a = \D_a^{-} \sqcup \D_a^{+}$.
It will be useful to let $\I_a^+:= \U_a \sqcup \D_a^+$ and $\I_a^-:= \D_a \sqcup \U_a^-$.
Observe that in $\T$ all the elements incomparable to $a$ are those in $\U_a^{-}\sqcup \D_a^{+}$. When no confusion is possible, we drop the subscript $a$ from the notations that we just introduced.

For a linear extension $L$ of $\P$ represented as a word, we denote by $L^{rev}$ the reverse of $L$, meaning the word obtained from $L$ by reading it from right to left. The following result is immediate:

\begin{lemma} \label{lem:realizerPop}
    Let $L_1,\dots , L_k$ be a realizer of a poset $\P$. Then $L_1^{rev},\dots , L_k^{rev}$ is a realizer of $\P^{op}$.
\end{lemma}

In most of the proofs we will make extensive use of the following easy result without reference to it:

\begin{lemma}\label{lem:inducedlinext}
Let $\P=\mathbf{X} \cup \mathbf{Y}$ and let $L_1,\dots,L_k$ be linear extensions of $\P$ such that the restrictions of these linear extensions to $\mathbf{X}$, respectively $\mathbf{Y}$, give a realizer of $\mathbf{X}$, rescpectively $\mathbf{Y}$. If for all $(x,y)\in \mathbf{X} \times \mathbf{Y}$ such that $x\not\sim y$, there exist $i,j\in [k]$ such that $x$ is to the left of $y$ in $L_i$ and to the right of $y$ in $L_j$, then $L_1,\dots,L_k$ is a realizer of $\P$. 
\end{lemma}






\begin{figure}
    \centering
\begin{tikzpicture}[scale=0.55]
\draw[red,thick] (0,0)--(0,8)--(10,2)--(10,7)--(0,0);
\draw (-2,2)--(-1,1)--(0,2);
\draw (-2,4)--(-1,3)--(0,5)--(-1,7)--(-2,6);
\draw (-1,10)--(0,8)--(1,10);
\draw (7.2,5)--(5,8)--(4,10);
\draw (4,7)--(5,8)--(6,10);
\draw (8,1)--(10,0)--(10,2);
\draw (8,7)--(9,9)--(10,7)--(11,9)--(12,7);

\draw[red,thick] (0,0) node {$\bullet$}; 
\draw (0,2) node {$\bullet$};
\draw (0,5) node {$\bullet$};
\draw[red,thick] (0,8) node {$\bullet$};
\draw[red,thick] (0,0) node [left]{$x_1$}; 
\draw (0,2) node [right]{$y_1^1$};
\draw (0,5) node [right]{$y_2^1$};
\draw[red,thick] (0.1,8) node [right]{$z_1$};

\draw (-1,1) node {$\bullet$};
\draw (-1,3) node {$\bullet$};
\draw (-1,7) node {$\bullet$};
\draw (-1,10) node {$\bullet$};
\draw (-1,1) node [below]{$a$};
\draw (-1,3) node [below]{$e$};
\draw (-1,7) node [above]{$h$};
\draw (-1,10) node [left]{$m$};

\draw (-2,2) node {$\bullet$};
\draw (-2,4) node {$\bullet$};
\draw (-2,6) node {$\bullet$};
\draw (-2,2) node [above]{$b$};
\draw (-2,4) node [above]{$f$};
\draw (-2,6) node [below]{$g$};

\draw (1,10) node {$\bullet$};
\draw (1,10) node [right]{$n$};

\draw (3.3,6) node {$\bullet$};
\draw (3.3,6) node [below left]{$y_1^2$};

\draw (4,7) node {$\bullet$};
\draw (4,10) node {$\bullet$};
\draw (4,7) node [left]{$i$};
\draw (4,10) node [left]{$k$};

\draw (5,8) node {$\bullet$};
\draw (5.1,8) node [right]{$j$};

\draw (6,10) node {$\bullet$};
\draw (6,10) node [right]{$l$};

\draw (7.2,5) node {$\bullet$};
\draw (7.2,5.3) node [below right]{$y_1^4$};

\draw (8,1) node {$\bullet$};
\draw (8,7) node {$\bullet$};
\draw (8,1) node [left]{$d$};
\draw (8,7) node [left]{$o$};

\draw (9,9) node {$\bullet$};
\draw (9,9) node [above]{$q$};

\draw (10,0) node {$\bullet$};
\draw[red,thick] (10,2) node {$\bullet$};
\draw (10,4.5) node {$\bullet$};
\draw[red,thick] (10,7) node {$\bullet$};
\draw (10,0) node [right]{$c$};
\draw[red,thick] (10,2) node [right]{$x_2$};
\draw (10,4.5) node [right]{$y_1^3$};
\draw[red,thick] (10,7) node [right]{$z_2$};

\draw (11,9) node {$\bullet$};
\draw (12,7) node {$\bullet$};
\draw (11,9) node [above]{$r$};
\draw (12,7) node [below]{$p$};
\end{tikzpicture}
    \caption{A unicycle poset $\P$ with an underlying cycle poset $\CC_2$ on the elements $x_1,\,y_1^1,y_2^1,\,z_1,\,y_1^2,\,x_2,\,y_1^3,\,z_2,\,y_1^4$. The underlying crown $\C_2$ is depicted in red.}
    \label{fig:runningexunicycle}
\end{figure}

\begin{example}
For the unicycle poset $\P$ of Figure \ref{fig:runningexunicycle}, the following three linear orders form a realizer of $\P$ (they are obtained using Proposition \ref{conjFinal}):
\begin{align*}
\mathcal{L}_1=  g\; e\; f\; a\; b\; x_1\; y_1^1\; y_2^1\; h\; c\; x_2\; y_1^2\; z_1\; m\; n\; y_1^3\; y_1^4\; z_2\; o\; q\; p\; r\; i\; j\; k\; l\; d\\
\mathcal{L}_2=o\; p\; c\; d\; x_2\; y_1^3\; y_1^2\; i\; x_1\; y_1^4\; j\; k\; l\; z_2\; r\; q\; a\; y^1_1\; e\; y_2^1\; z_1\; n\; m\; g\; h\; f\; b\\
\mathcal{L}_3=x_1\; y_1^4\; i\; j\; l\; k\; c\; d\; x_2\; a\; b\; y^1_1\; e\; f\; y_2^1\; g\; h\; y_1^3\; z_2\; p\; r\; o\; q\; y_1^2\; z_1\; n\; m
\end{align*}

For example, the only elements below $y_1^2$ are $c$ and $x_2$. This can be seen by intersecting the three linear orders $\mathcal{L}_1$, $\mathcal{L}_2$ and $\mathcal{L}_3$. Indeed, $c$ and $x_2$ are to the left of $y_1^2$ in each of the linear orders, whereas the only other elements to the left of $y_1^2$ in $\mathcal{L}_2$ are $o,\,p,\,d,\,y_1^3$, but these elements are to the right of $y_1^2$ in $\mathcal{L}_1$.
\end{example}

\begin{remark}
We want to point out that after finding a specific realizer of the crowns in Lemma \ref{lem:ExtLinCouronne} and of the tree posets in Proposition \ref{prop:Hardcore}, all the remaining results of the paper starting from Corollary \ref{cor:extlinarbre} will have always the same kind of proofs. We will be given three linear orders of a poset $\P$ and the fact that these are linear extensions is straightforward to prove (the pictures help). Then we will observe that realizers of some relevant subposets are obtained using previous results as subwords of the three given linear extensions. We will say that we \emph{recover} these subposets. Then we will finish the proofs using Lemma \ref{lem:inducedlinext}, by proving the incomparabilities that are not part of the subposets described previously.
\end{remark}

\section{Unicycle posets}\label{sectionUnicycle}

A \emph{cycle poset} is a poset $\P$ whose cover graph is a cycle. A cycle poset such that all its elements are minimal or maximal elements is called a \emph{fundamental cycle poset}.
An immediate result is that a cycle poset has all its vertices of degree $2$ and the same number of minimal and maximal elements.
It follows that the fundamental cycle posets are the crowns $\C_n$ for $n\geq 2$, whose definition is the following (see Figure \ref{fig:crown}):

\begin{definition}\label{def:crown}
    For $n\geq 2$, the \emph{crown} $\C_{n}$ is the poset having $2n$ elements $x_1,x_2,\dots,x_n ,z_1,z_2,\dots,z_n$ where for all $i$ the $x_i$'s are minimal elements and the $z_i$'s are maximal elements and the relations are $x_{i+1}$ is less than $z_i$ and $z_{i+1}$ for indices taken cyclically.
    We also define a numbering of all covers of the crown $\C_n$ for $n\geq 2$; for all $i\in [n]$, the number $2i-1$ is assigned to the cover $x_i \lessdot z_i$, and $2i$ is assigned to $x_{i+1} \lessdot z_i$.
It means that for all $p\in [2n]$ the cover numbered by $p$ is $x_{\lfloor \frac{p}{2}\rfloor +1} \lessdot z_{\lceil \frac{p}{2}\rceil}$.
We define the crown $\C_1$ in a different way as the square poset on four elements, with minimum $x$, maximum $z$ and incomparable elements $a,b$.
\end{definition}

In the sequel, the indices within the crowns will always be taken cyclically. 

As defined in \cite{Spinrad88}, an (edge) \emph{subdivision} $P'$ of a poset $P$ is a poset obtained by adding elements ``on'' the edges of $P$ in its Hasse diagram. Two subdivisions of the crown $\C_2$ are represented in \cref{fig:Sub}. The following is immediat.

\begin{lemma}\label{lemma:cycleiscrown}
A cycle poset $\CC$ is a subdivision of a crown.
\end{lemma}

We denote by $\CC_n$ a cycle poset that has $\C_n$ as a subposet; we say that $\C_n$ is the \emph{underlying crown of} $\CC_n$.
A \emph{rooted tree poset} is a pair $(\T,r)$ of a tree poset $\T$ and a distinguished element $r$ of $\T$.

\begin{definition}\label{grafting}
    Let $\P$ be a poset.
    Let $\TT(\P):=\{(\T_x,r_x)\mid x\in\P\}$ be a family of rooted tree posets indexed by $\P$.
    The \emph{grafting of $\TT$ on $\P$} is the poset on $\textbf{R}:=\{(x,t)\mid x\in \P,\, t\in\T_x\}$ such that $(a,b)\leq (c,d)$ if and only if
    \begin{enumerate}[(i)]
        \item $a=c$ and $b\leq d$ in $\T_a$; or
        \item $a<c$ in $\P$, $b\leq r_a$ in $\T_a$ and $r_c\leq d$ in $\T_c$.
    \end{enumerate}
\end{definition}

Intuitively, the Hasse diagram of the grafting of $\TT(\P)$ on $\P$ corresponds to the directed graph obtained by identifying each $x\in\P$ with $r_x$. For example, the result of the grafting of rooted trees on the poset of Figure \ref{fig:cycleC1} is given in Figure \ref{fig:unicycleC1}.

\begin{remark} \label{rem:graft}
The grafting defined in this paper is a specific example of a construction
called \emph{amalgamation} \cite{TROTTERproblems}.

Note that both the subdivision and the grafting operations on a poset can increase its dimension. For the grafting, it is intuitive. For the subdivision, see \cref{fig:Sub2} which shows a subdivision of dimension 3 of the crown poset $\C_2$ which has dimension 2.
\end{remark}

\begin{figure}
    \begin{subfigure}{.49\textwidth}
        \centering
        \begin{tikzpicture}
            \draw (0,0)--(0,1)--(0,2)--(1,0)--(1,2)--(0,0);
            \draw (0,0) node{$\bullet$};
            \draw[red] (0,1) node{$\bullet$};
            \draw (0,2) node{$\bullet$};
            \draw (1,0) node{$\bullet$};
            \draw (1,2) node{$\bullet$};
        \end{tikzpicture}
        \caption{A subdivision of $\C_2$ which has dimension 2.}
        \label{fig:sub1}
    \end{subfigure}
    ~
    \begin{subfigure}{.49\textwidth}
        \centering
        \begin{tikzpicture}
            \draw (0,0)--(0,1)--(0,2)--(1,0)--(1,2)--(0,0);
            \draw (0,0) node{$\bullet$};
            \draw[red] (0,1) node{$\bullet$};
            \draw (0,2) node{$\bullet$};
            \draw[red] (0.25,0.5) node{$\bullet$};
            \draw (1,0) node{$\bullet$};
            \draw (1,2) node{$\bullet$};
        \end{tikzpicture}
        \caption{A subdivision of $\C_2$ which has dimension 3.}
        \label{fig:Sub2}
    \end{subfigure}
    \caption{\label{fig:Sub}}
\end{figure}

Since a unicycle poset is a cycle poset with tree posets grafted to any of its vertices, it follows:

\begin{proposition}\label{prop:unicycleCrowns}
    Let $\P$ be a unicycle poset. There exist $n\geq 1$ and $\TT(\CC_n)$ such that the grafting of $\TT$ on $\CC_n$ is isomorphic to $\P$.
\end{proposition}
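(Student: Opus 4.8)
The plan is to read the decomposition straight off the cover graph. Let $G$ be the cover graph of $\P$ and let $\Gamma$ be its unique cycle, with vertex set $V(\Gamma)$. Since $G$ is connected with exactly one cycle, $\Gamma$ has no chord (a chord would create a second cycle), and for each $v\in V(\Gamma)$ the set $S_v$ of vertices whose unique shortest path in $G$ to $V(\Gamma)$ first meets it at $v$ --- including $v$ itself --- induces a subtree $G[S_v]$ of $G$; the $S_v$ partition the vertices of $G$, and $G$ is recovered from $\Gamma$ by identifying, for each $v$, the vertex $v$ with the root of $G[S_v]$. I would then set $\CC:=\P|_{V(\Gamma)}$ (the subposet of $\P$ induced on $V(\Gamma)$) and, for each $v\in V(\Gamma)$, let $\T_v:=\P|_{S_v}$, rooted at $v$.

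First I would check that $\CC$ is a cycle poset and each $\T_v$ is a tree poset, i.e.\ that their cover graphs are $\Gamma$ and $G[S_v]$ respectively. Every edge of $\Gamma$ (resp.\ of $G[S_v]$) is a cover relation of $\P$ between two elements of $V(\Gamma)$ (resp.\ of $S_v$), hence a cover relation of the induced subposet. Conversely, a cover of an induced subposet that is not already an edge of $G$ would be witnessed by a saturated chain of $\P$ leaving the relevant vertex set and returning to it; but $\Gamma$ has no chord and each $S_v$ is attached to the rest of $G$ only through the cut vertex $v$, so such a chain would have to repeat a vertex, which is impossible for a strictly increasing chain. Hence $\CC$ is a cycle poset, so by \cref{lemma:cycleiscrown} it is some $\CC_n$ with $n\geq 1$, and each $\T_v$ is a tree poset.

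It then remains to identify $\P$ with the grafting $\R$ of $\TT:=\{(\T_v,v)\mid v\in V(\Gamma)\}$ on $\CC_n$, via $\varphi\colon\R\to\P$, $(v,t)\mapsto t$, which is visibly a bijection. If $(v,t)\leq(w,s)$ in $\R$, then either $v=w$ and $t\leq s$ inside $\T_v$, or $v<w$ in $\CC_n$ with $t\leq v$ in $\T_v$ and $w\leq s$ in $\T_w$ (the two cases of \cref{grafting}); concatenating the corresponding chains of $\P$ gives $t\leq s$ in $\P$. Conversely, given $t\leq s$ in $\P$ with $t\in S_v$ and $s\in S_w$, a saturated chain from $t$ to $s$ must --- again because each branch is glued to $G$ along a single cut vertex and $\Gamma$ has no chord --- run inside $S_v$ up to $v$, then (if $v\neq w$) along the cycle from $v$ to $w$, then inside $S_w$ up to $s$; this yields exactly $t\leq v$ in $\T_v$, $v\leq w$ in $\CC_n$, and $w\leq s$ in $\T_w$, i.e.\ condition (ii) of \cref{grafting} (or condition (i) when $v=w$). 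So $\varphi$ is an isomorphism of posets.

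The only genuinely non-formal point --- and the step I expect to be the main obstacle, or at least the one deserving care --- is the repeated ``no escape and return'' observation: that in a unicyclic graph a repetition-free path, in particular a saturated chain of $\P$, can neither use a chord of $\Gamma$ nor leave a branch $S_v$ through its cut vertex $v$ and come back. Once this is granted, everything else is bookkeeping against \cref{grafting}.
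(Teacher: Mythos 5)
Your proof is correct and takes essentially the same route as the paper: the authors state \cref{prop:unicycleCrowns} without proof, as an immediate consequence of the cover graph being the unique cycle with pendant trees attached at its vertices, which is precisely the decomposition you carry out. Your careful verification — in particular the ``no escape and return'' argument showing that a saturated chain cannot leave a branch through its cut vertex or use a chord of the cycle — correctly supplies the details the paper leaves implicit.
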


For $\P$ a unicycle poset, the cycle poset $\CC_n$ from which $\P$ is constructed is called its \emph{underlying cycle poset}. For example see Figure \ref{fig:unicycleC1} for a unicycle poset whose underlying cycle poset is the cycle poset in Figure \ref{fig:cycleC1}.
See Figure \ref{fig:generalunicycle} for an example of a unicycle poset whose underlying cycle poset is $\CC_n$ with $n\geq 2$.
Using Proposition \ref{prop:unicycleCrowns}, a unicycle poset $\P$ can be describe using a pair $(\CC_n,\TT(\CC_n))$.

The graftings of trees on extremal elements of $\CC_n$ have a different impact on linear extensions compared to graftings on non-extremal elements.
Thus, we treat them in two different manners.
The extremal graftings (on crown posets) are treated in \cref{sectionTreesonVertices} and the other ones in \cref{sectionTreesonCovers}.
We wrap everything together in \cref{sectionFinale}.

\begin{figure}
    \begin{subfigure}{.49\textwidth}
        \centering
        \begin{tikzpicture}
            \begin{scope}[scale=2.2]
                \draw (0,0)--(1,1)--(0,2)--(-1,1)--(0,0); 
                \draw (0.1,0) node{$x$};
                \draw (0,0) node{$\bullet$};
                \draw (-1.1,1) node{$a$};
                \draw (-1,1) node{$\bullet$};
                \draw (0.1,0) node{$x$};
                \draw (0,0) node{$\bullet$};
                \draw (1.1,1) node{$b$};
                \draw (1,1) node{$\bullet$};
                \draw (0.1,0) node{$x$};
                \draw (0,0) node{$\bullet$};
                \draw (0.1,2) node{$z$};
                \draw (0,2) node{$\bullet$};
                
                \begin{scope}[scale=0.1]
                    \draw (-1,-3)--(1,3)--(-1,3)--(1,-3)--(-1,-3);
                    \draw (1,3)--(1.5,2)--(0.67,2);
                    \draw (-1,-3)--(-1.5,-2)--(-0.67,-2);
                \end{scope}
                
                \begin{scope}[xshift=1cm,yshift=1cm,scale=0.1]
                    \draw (-1,-3)--(1,3)--(-1,3)--(1,-3)--(-1,-3);
                    \draw (1,3)--(1.5,2)--(0.67,2);
                    \draw (-1,-3)--(-1.5,-2)--(-0.67,-2);
                \end{scope}
                
                \begin{scope}[yshift=2cm,scale=0.1]
                    \draw (-1,-3)--(1,3)--(-1,3)--(1,-3)--(-1,-3);
                    \draw (1,3)--(1.5,2)--(0.67,2);
                    \draw (-1,-3)--(-1.5,-2)--(-0.67,-2);
                \end{scope}
                
                \begin{scope}[xshift=-1cm,yshift=1cm,scale=0.1]
                    \draw (-1,-3)--(1,3)--(-1,3)--(1,-3)--(-1,-3);
                    \draw (1,3)--(1.5,2)--(0.67,2);
                    \draw (-1,-3)--(-1.5,-2)--(-0.67,-2);
                \end{scope}
                
                \begin{scope}[xshift=0.4cm,yshift=0.4cm,scale=0.07,rotate=40]
                    \draw (-1,-3)--(1,3)--(-1,3)--(1,-3)--(-1,-3);
                    \draw (1,3)--(1.5,2)--(0.67,2);
                    \draw (-1,-3)--(-1.5,-2)--(-0.67,-2);
                \end{scope}
                
                \begin{scope}[xshift=0.6cm,yshift=0.6cm,scale=0.07,rotate=40]
                    \draw (-1,-3)--(1,3)--(-1,3)--(1,-3)--(-1,-3);
                    \draw (1,3)--(1.5,2)--(0.67,2);
                    \draw (-1,-3)--(-1.5,-2)--(-0.67,-2);
                \end{scope}
                
                \begin{scope}[xshift=0.4cm,yshift=1.6cm,scale=0.07,rotate=-40]
                    \draw (-1,-3)--(1,3)--(-1,3)--(1,-3)--(-1,-3);
                    \draw (1,3)--(1.5,2)--(0.67,2);
                    \draw (-1,-3)--(-1.5,-2)--(-0.67,-2);
                \end{scope}
                
                \begin{scope}[xshift=0.6cm,yshift=1.4cm,scale=0.07,rotate=-40]
                    \draw (-1,-3)--(1,3)--(-1,3)--(1,-3)--(-1,-3);
                    \draw (1,3)--(1.5,2)--(0.67,2);
                    \draw (-1,-3)--(-1.5,-2)--(-0.67,-2);
                \end{scope}
                
                \begin{scope}[xshift=-0.4cm,yshift=0.4cm,scale=0.07,rotate=-40]
                    \draw (-1,-3)--(1,3)--(-1,3)--(1,-3)--(-1,-3);
                    \draw (1,3)--(1.5,2)--(0.67,2);
                    \draw (-1,-3)--(-1.5,-2)--(-0.67,-2);
                \end{scope}
                
                \begin{scope}[xshift=-0.6cm,yshift=0.6cm,scale=0.07,rotate=-40]
                    \draw (-1,-3)--(1,3)--(-1,3)--(1,-3)--(-1,-3);
                    \draw (1,3)--(1.5,2)--(0.67,2);
                    \draw (-1,-3)--(-1.5,-2)--(-0.67,-2);
                \end{scope}
                
                \begin{scope}[xshift=-0.4cm,yshift=1.6cm,scale=0.07,rotate=40]
                    \draw (-1,-3)--(1,3)--(-1,3)--(1,-3)--(-1,-3);
                    \draw (1,3)--(1.5,2)--(0.67,2);
                    \draw (-1,-3)--(-1.5,-2)--(-0.67,-2);
                \end{scope}
                
                \begin{scope}[xshift=-0.6cm,yshift=1.4cm,scale=0.07,rotate=40]
                    \draw (-1,-3)--(1,3)--(-1,3)--(1,-3)--(-1,-3);
                    \draw (1,3)--(1.5,2)--(0.67,2);
                    \draw (-1,-3)--(-1.5,-2)--(-0.67,-2);
                \end{scope}
            \end{scope}
        \end{tikzpicture}
        \caption{A unicycle poset whose underlying cycle poset is the poset in Figure \ref{fig:cycleC1}.}
        \label{fig:unicycleC1}
    \end{subfigure}
    ~
    \begin{subfigure}{.49\textwidth}
        \centering
        \begin{tikzpicture}
            \begin{scope}[scale=2.3]
                \draw (0,0)--(1,1)--(0,2)--(-1,1)--(0,0); 
                \draw (0.1,0) node{$x$};
                \draw (0,0) node{$\bullet$};
                \draw (-1.1,1) node{$a$};
                \draw (-1,1) node{$\bullet$};
                \draw (0.1,0) node{$x$};
                \draw (0,0) node{$\bullet$};
                \draw (1.1,1) node{$b$};
                \draw (1,1) node{$\bullet$};
                \draw (0.1,0) node{$x$};
                \draw (0,0) node{$\bullet$};
                \draw (0.1,2) node{$z$};
                \draw (0,2) node{$\bullet$};
                
                \begin{scope}[xshift=1cm,yshift=1cm,scale=0.1]
                    \draw (0,0) node{$\bullet$};
                \end{scope}
                
                \begin{scope}[xshift=-1cm,yshift=1cm,scale=0.1]
                   \draw (0,0) node{$\bullet$};
                \end{scope}
                
                \begin{scope}[xshift=0.4cm,yshift=0.4cm,scale=0.07]
             \draw (0,0) node{$\bullet$};
                \end{scope}
                
                \begin{scope}[xshift=0.6cm,yshift=0.6cm,scale=0.07]
                   \draw (0,0) node{$\bullet$};
                \end{scope}
                
                \begin{scope}[xshift=0.4cm,yshift=1.6cm,scale=0.07]
                    \draw (0,0) node{$\bullet$};
                \end{scope}
                
                \begin{scope}[xshift=0.6cm,yshift=1.4cm,scale=0.07]
                    \draw (0,0) node{$\bullet$};
                \end{scope}
                
                \begin{scope}[xshift=-0.4cm,yshift=0.4cm,scale=0.07]
                   \draw (0,0) node{$\bullet$};
                \end{scope}
                
                \begin{scope}[xshift=-0.6cm,yshift=0.6cm,scale=0.07]
                    \draw (0,0) node{$\bullet$};
                \end{scope}
                
                \begin{scope}[xshift=-0.4cm,yshift=1.6cm,scale=0.07]
                    \draw (0,0) node{$\bullet$};
                \end{scope}
                
                \begin{scope}[xshift=-0.6cm,yshift=1.4cm,scale=0.07]
                   \draw (0,0) node{$\bullet$};
                \end{scope}
            \end{scope}
        \end{tikzpicture}
        \caption{A cycle poset with underlying crown $\C_1$.}
        \label{fig:cycleC1}
    \end{subfigure}
    \caption{\label{fig:cyclesC1}}
\end{figure}

\section{Crowns}
\label{sectionCrowns}

The fact that the crowns $\C_1$ and $\C_2$ have dimension $2$ and the crown $\C_n$ for $n\geq 3$ has dimension $3$ is a well-known result \cite[Theorem 4.3]{TROTTERcrowns}.
But we need to work with a specific realizer for later results. After a thorough study of the realizers of crowns, we found that the following realizer is well suited for our construction.

\begin{lemma}
\label{lem:ExtLinCouronne}
For all $n\geq 2$, the following three linear orders of $\C_n$ form a realizer:
    \begin{gather}
        x_1 \;  x_2 \; z_1 \prod_{i=2}^{n-1} \big(x_{i+1} \; z_i\big) \; z_n,\label{exten1}\\
        x_2 \; \prod_{i=2}^{n-1} \big(x_{i+1} \; z_i\big) \;x_1 \; z_n \; z_1,\label{exten2}\\
        x_1 \; \prod_{i=n}^{2} \big(x_{i} \; z_i\big) \; z_1\label{exten3}.
    \end{gather}
\end{lemma}

\begin{proof}
    A linear order is a linear extension of $\C_n$ if and only if for any $i$, $z_i$ is to the right of $x_i$ and $x_{i+1}$.
    It follows that the linear orders \eqref{exten1}, \eqref{exten2} and \eqref{exten3} of $\C_n$ are linear extensions of $\C_n$.
    It remains to prove that we get all the incomparabilities by intersecting the linear extensions \eqref{exten1}, \eqref{exten2} and \eqref{exten3}; the only comparabilities being $x_i<z_{i-1},z_i$. 
    For all $i$ the $z_i$'s are mutually incomparable by looking at \eqref{exten1} and \eqref{exten3}.
    For $x_1$, with \eqref{exten1} and \eqref{exten2} we have $x_1<z_1,z_n$ and no other relations. Since we treated $x_1$, we can look at the linear extensions restricted to all elements except $x_1$.
    Let $i\neq 1$. We look at $x_i$. By \eqref{exten1} and \eqref{exten3} we have only the relations $x_i<z_{i-1},z_i$.
    This concludes the proof.
\end{proof}

\begin{lemma}
    A realizer of the crown $\C_1$ is given by the two linear extensions $x\,a\,b\,z$ and $x\,b\,a\,z$.    
\end{lemma}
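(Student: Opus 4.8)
The plan is to verify directly, from the definition of $\C_1$ in \cref{def:crown}, that each of the two given words is a linear extension, and then that their intersection recovers exactly the order of $\C_1$. Recall that $\C_1$ has minimum $x$, maximum $z$, and a single incomparable pair $\{a,b\}$; its relations are $x<a$, $x<b$, $a<z$, $b<z$ (and $x<z$ by transitivity), with $a\not\sim b$.

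First I would check that $x\,a\,b\,z$ and $x\,b\,a\,z$ are both linear extensions: in either word, $x$ is the leftmost letter and $z$ is the rightmost letter, so every cover relation of $\C_1$ (each of which has $x$ as its bottom or $z$ as its top) is respected, and hence both words are linear orders compatible with $\leq$.

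Next I would compute the intersection. Every pair of elements of $\C_1$ other than $\{a,b\}$ is comparable, and in both words these pairs appear in the order dictated by $\leq$ (since $x$ is first and $z$ is last in each); so the intersection contains all the relations of $\C_1$. It remains to see that $a$ and $b$ stay incomparable in the intersection: in $x\,a\,b\,z$ the letter $a$ is to the left of $b$, while in $x\,b\,a\,z$ it is to the right, so the intersection imposes neither $a<b$ nor $b<a$. Therefore the intersection of the two words is precisely $\C_1$, i.e. they form a realizer. There is no real obstacle here: $\C_1$ has a single incomparable pair, and exhibiting two linear extensions that disagree on its orientation is immediate; combined with \cref{lem:inducedlinext}-style reasoning (or simply the fact that $\C_1$ is not a chain) this also shows the dimension of $\C_1$ is exactly $2$.
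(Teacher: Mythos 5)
Your verification is correct and is exactly the routine check the paper has in mind (the paper states this lemma without proof, treating it as immediate). Nothing is missing: both words respect the four relations of $\C_1$, and the opposite placements of $a$ and $b$ leave that unique pair incomparable in the intersection.
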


Even if $\C_2$ is of dimension $2$, we still use the realizer of size three given by Lemma \ref{lem:ExtLinCouronne}.
Similarly, for $\C_1$ we consider the realizer of size 3 obtained by repeating its first linear extension $x\,a\,b\,z$.

\section{Tree posets}
\label{sectionTrees}

We know that the tree posets are of dimension at most $3$ \cite[Theorem 2]{TROTTERplanarTrees} but, as for the crowns, an important step in the proof of our main result is to obtain a specific realizer of the tree posets (see Corollary \ref{cor:extlinarbre}).
Recall the notations from Section \ref{notation} related to the tree posets.
If a tree poset $\T$ has a minimum $a$, choose a planar representation of $\T$, which means choosing an order on the covers of any vertex. 
The \emph{prefix left-to-right reading word} of $\T$ is the word obtained by the following procedure; starting at $a$, each time one visits a vertex $v$, one then has to recursively visit the cover elements of $v$ starting from the leftmost one.
The \emph{prefix right-to-left reading word} of $\T$ is exactly the same procedure but visiting the cover elements from right to left instead.
See Figure \ref{fig:rootedtree} for an example of the following result:

\begin{figure}
    \begin{subfigure}{.49\textwidth}
        \centering  
\begin{tikzpicture}
\draw (-1,1)--(0,0)--(2,2);
\draw (1,1)--(0,2);
\draw (0,0) node{$\bullet$};
\draw (-1,1) node{$\bullet$};
\draw (1,1) node{$\bullet$};
\draw (0,2) node{$\bullet$};
\draw (2,2) node{$\bullet$};
\draw (0,0) node[below]{$a$};
\draw (-1,1) node[left]{$e_1$};
\draw (1,1) node[right]{$e_2$};
\draw (0,2) node[left]{$e_3$};
\draw (2,2) node[right]{$e_4$};  
\end{tikzpicture}
        \caption{A tree poset $\P$ with a minimum $a$. The left cover of $a$ is $e_1$ and its right cover is $e_2$. A realizer of size $2$ is given by $a\,e_1\,e_2\,e_3\,e_4$ and $a\,e_2\,e_4\,e_3\,e_1$.}
        \label{fig:rootedtree}
    \end{subfigure}
    ~
    \begin{subfigure}{.49\textwidth}
        \centering
\begin{tikzpicture}
\begin{scope}[scale=1.2]
 \draw (-1,1)--(0,0)--(2,2);
\draw (1,1)--(0,2);
\draw (0,0) node{$\bullet$};
\draw (-1,1) node{$\bullet$};
\draw (1,1) node{$\bullet$};
\draw (0,2) node{$\bullet$};
\draw (2,2) node{$\bullet$};
\draw (0,0) node[below]{$a$};
\draw (-1,1) node[left]{$e_1$};
\draw (1,1) node[right]{$e_2$};
\draw (0,2) node[left]{$e_3$};
\draw (2,2) node[right]{$e_4$};   
\begin{scope}[rotate around={-10:(-1,1)}]
    \draw (-1,1)--(-0.5,0)--(-1.5,0)--(-1,1);
\draw (-1,0.3) node{$\T_1$};
\end{scope}

\begin{scope}[xshift=3cm, yshift=1cm, rotate around={20:(-1,1)}]
    \draw (-1,1)--(-0.5,0)--(-1.5,0)--(-1,1);
\draw (-1,0.3) node{$\T_4$};
\end{scope}

\begin{scope}[xshift=2cm, rotate around={10:(-1,1)}]
    \draw (-1,1)--(-0.5,0)--(-1.5,0)--(-1,1);
\draw (-1,0.3) node{$\T_2$};
\end{scope}

\begin{scope}[xshift=1cm, yshift=1cm]
    \draw (-1,1)--(-0.5,0)--(-1.5,0)--(-1,1);
\draw (-1,0.3) node{$\T_3$};
\end{scope}
\end{scope}
\end{tikzpicture}      
        \caption{Illustration for the proof of Proposition \ref{prop:Hardcore}. \label{fig:proofPropTree}}
    \end{subfigure}
    \caption{\label{fig:PropTrees}}
\end{figure}

\begin{lemma}\label{lemma1}
    Let $\P$ be a tree poset.
    If $\P$ has a minimum or a maximum, then its dimension is at most $2$.
\end{lemma}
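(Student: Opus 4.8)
The plan is to reduce to the case where $\P$ has a minimum and then to verify that the two reading words introduced just above form a realizer of size $2$, as illustrated by \cref{fig:rootedtree}. If $\P$ has a maximum rather than a minimum, then $\P^{op}$ is a tree poset with a minimum, and by \cref{lem:realizerPop} a realizer of size $2$ of $\P^{op}$ gives one of $\P$; so I will assume from now on that $\P$ has a minimum $a$, fix a planar representation of $\P$, and let $L$ be its prefix left-right reading word and $R$ its prefix right-left reading word. The claim to prove is that $\{L,R\}$ is a realizer of $\P$.

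First I would record the structure of $\P$: since $a$ is the minimum, the Hasse diagram of $\P$ is a tree rooted at $a$ with every cover oriented away from $a$, so for $u,v\in\P$ one has $u<v$ if and only if $u$ lies strictly below $v$ on the unique path from $a$ to $v$, i.e. $u$ is a proper ancestor of $v$ in this rooted tree. Both $L$ and $R$ are depth-first preorder traversals of this rooted tree, hence each of them lists every element exactly once and lists every vertex before all of its descendants. Consequently $L$ and $R$ are linear orders of $\P$ that respect all relations $u<v$, that is, they are linear extensions of $\P$.

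It then remains to check that $L\cap R$ creates no spurious relation, i.e. that any two incomparable elements $u\not\sim v$ occur in opposite orders in $L$ and $R$. Given $u\not\sim v$, let $w$ be their lowest common ancestor in the rooted tree; then $u$ and $v$ lie in the subtrees hanging from two \emph{distinct} cover elements $c,c'$ of $w$, say with $c$ to the left of $c'$ in the chosen planar representation. In the left-right traversal the entire subtree rooted at $c$ is visited before the entire subtree rooted at $c'$, so $u$ is to the left of $v$ in $L$; in the right-left traversal the relative order of those two subtrees is reversed, so $v$ is to the left of $u$ in $R$. Hence $u$ and $v$ are incomparable in $L\cap R$, which gives $L\cap R=\P$ and $\dim\P\le 2$. (If $\P$ happens to be a chain this discussion is vacuous and one even has $\dim\P\le 1$.)

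I do not expect a genuine obstacle here; the only points needing care are the two standard facts about preorder traversals that drive both halves of the argument: that a depth-first preorder lists each vertex before all its descendants, and that the subtrees hanging off a fixed vertex are traversed as contiguous blocks whose relative order is exactly reversed when one switches from the left-right to the right-left convention. Once these are stated cleanly, the "linear extension" part and the "reversing incomparabilities" part are both immediate.
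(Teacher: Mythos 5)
Your proposal is correct and follows exactly the paper's approach: reduce to the minimum case via \cref{lem:realizerPop}, then take the prefix left-right and prefix right-left reading words as a realizer of size $2$. You simply spell out the verification (preorder lists ancestors first; the subtrees below a common vertex appear as contiguous blocks in reversed relative order) that the paper dismisses as ``easy to see.''
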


\begin{proof}
    First suppose that $\P$ has a minimum.
    It is easy to see that a realizer of size $2$ of $\P$ is given by the linear extensions obtained from the prefix left-to-right and prefix right-to-left reading words of the tree.
    If $\P$ has a maximum, the result follows by Lemma \ref{lem:realizerPop} as $\P^{op}$ has a minimum. 
\end{proof}

\begin{proposition}\label{prop:Hardcore}
    Let $\P$ be a tree poset. If $a$ is a minimal element of $\P$, then there exist linear extensions $U_a^-$ and $U_a^+$ of respectively $\U_a^-$ and $\U_a^+$, and linear extensions $U_{a,1}$ and $U_{a,2}$ of $\U_a$ such that the three following linear orders form a realizer of $\P$:
    \begin{gather*}
        U_a^{-} \; a \; U_a^{+}, \quad a \; U_{a,1}, \quad\text{and}\quad a \; U_{a,2}.
    \end{gather*}
    
    If $a$ is a maximal element of $\P$, then there exist linear extensions $D_a^-$ and $D_a^+$ of respectively $\D_a^-$ and $\D_a^+$, and linear extensions $D_{a,1}$ and $D_{a,2}$ of $\D_a$ such that the three following linear orders form a realizer of $\P$:
    \begin{gather*}
        D_a^{-} \; a \; D_a^{+}, \quad  D_{a,1} \; a, \quad\text{and}\quad D_{a,2} \; a.
    \end{gather*}
\end{proposition}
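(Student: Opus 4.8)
The plan is to induct on the size of $\P$, treating the minimal-element case (the maximal case then follows from \cref{lem:realizerPop} applied to $\P^{op}$, exactly as in \cref{lemma1}). Let $a$ be a minimal element of $\P$. The base case where $a$ is isolated or $\P=\{a\}$ is trivial. In general, let $e_1,\dots,e_k$ be the covers of $a$ in $\P$, and for each $j$ let $\T_j$ be the connected component of $\P\setminus\{a\}$ containing $e_j$; thus $a\lessdot e_j$ and $\P=\{a\}\sqcup\T_1\sqcup\dots\sqcup\T_k$ with the only added relations being $a<t$ for every $t\in\T_j$ and every $j$ (so $\U_a=\T_1\sqcup\dots\sqcup\T_k$, while $\D_a=\emptyset$ since $a$ is minimal, and hence also $\U_a^-=\emptyset$, $\U_a^+=\U_a$). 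Wait — that would make the statement nearly vacuous, so the intended reading must be that $a$ is merely \emph{a} minimal element of $\P$, not necessarily one that is minimum; that is, $a$ may have a down-neighbour only through being covered, but $\D_a$ can still be nonempty because elements of $\D_a$ need only be reachable from $a$ by a first down-step, not be below $a$. So in general $\P\setminus\{a\}=\U_a\sqcup\D_a$, and I must handle both.

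The strategy is to peel off one neighbour of $a$ at a time. Pick a cover $e$ of $a$ (if $a$ has no cover, it lies in $\D_a$-free territory and one reduces to the components; handle that separately and symmetrically). Let $\T'$ be the component of $e$ in $\P\setminus\{a\}$, let $b$ be the neighbour of $a$ in $\P$ through which the rest of the tree hangs — more carefully, pick a leaf-direction so that $\P=\P_0\cup\T_e$ where $\T_e$ is a rooted subtree hanging off $a$ via $e$ and $\P_0$ is the rest (still a tree poset with $a$ minimal in it), glued only along $a$. By induction, $\P_0$ has a realizer $U^-\,a\,U^+,\ a\,U_1,\ a\,U_2$ of the prescribed shape (here $U^\pm$ are linear extensions of $(\U_a^-)\cap\P_0$ etc.), and $\T_e$ — which has $a$ as a minimal element and $e$ as the unique cover of $a$ in it — also has such a realizer, say $V^-\,a\,V^+,\ a\,V_1,\ a\,V_2$, where the sign decorations now refer to $\T_e$. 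Since within $\T_e$ the vertex $a$ has the single cover $e$, every element of $\T_e\setminus\{a\}$ lies in $\U_a$ (relative to $\T_e$), i.e. $V^-$ is empty and $V^+=V_1=V_2$ up to reordering — so actually \cref{lemma1} already gives a size-$2$ realizer $a\,W_1,\ a\,W_2$ of $\T_e$ with $W_1,W_2$ linear extensions of $\T_e\setminus\{a\}$. Now I merge: form
\begin{gather*}
U^-\; a\; W_1\; U^+,\qquad a\; U_1\; W_2,\qquad a\; W_1\; U_2,
\end{gather*}
or some such interleaving — the precise placement of $W_1,W_2$ relative to $U^\pm,U_1,U_2$ is chosen so that (a) all three are linear extensions of $\P$ (the only new relations are $a<$ everything in $\T_e\setminus\{a\}$, which holds since $a$ precedes $W_i$ everywhere, plus the internal relations of $\T_e$, preserved because $W_i$ is a linear extension of it), and (b) every incomparability between $\T_e\setminus\{a\}$ and $\P\setminus\T_e$ is witnessed: a vertex of $\U_a^+\setminus\T_e$ is incomparable to a vertex of $\T_e\setminus\{a\}$, and I need one extension putting the former first and one putting the latter first, which the asymmetric placement of $W_1$ (right after $a$, before $U^+$, in the first extension; late in the third) against $U_1,U_2$ achieves.

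The main obstacle — and the reason the statement is phrased with the rigid $U^-\,a\,U^+$ / $a\,U_1$ / $a\,U_2$ template rather than just "dimension $\le 3$" — is bookkeeping the incomparabilities across the graft so that the template is \emph{preserved} by the induction: I must choose, once and for all, on which side of $a$ the newly grafted block sits in the first extension and whether it goes early or late in the second and third, in a way that works no matter how the inductive hypotheses for $\P_0$ and $\T_e$ came out. Concretely the delicate point is that $\U_a^-$ (elements above a descendant of $a$ but not above $a$ — impossible when $a$ is minimal, so in fact $\U_a^-=\emptyset$ here!) — so actually for $a$ minimal we always have $\U_a^-=\varnothing$, the first extension is just $a\,U_a^+$, and all three extensions begin with $a$; the real content is distributing $\U_a=\U_a^+$ among three linear extensions realizing the forest $\P\setminus\{a\}$ together with all its incomparabilities, with the extra freedom that $a$ may be placed anywhere (it is below everything). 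That is: I need a size-$3$ realizer of the disjoint-union-of-rooted-trees poset $\P\setminus\{a\}$ such that, prepending $a$ to each, it realizes $\P$ — which reduces to showing each $\T_j$ contributes compatibly, handled by induction on the number of components $k$ exactly as above, the two-component merge being the crux. I expect the merge lemma (two trees sharing the minimal element $a$) to be where all the care goes; everything else is routine verification using \cref{lem:inducedlinext} and the "left in one, right in another" criterion from the Remark.
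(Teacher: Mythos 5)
Your proposal contains a decisive error in the structural analysis: you conclude that $\U_a^-=\emptyset$ whenever $a$ is a minimal element, and from this you reduce the statement to ``find a size-$3$ realizer of the forest $\P\setminus\{a\}$ and prepend $a$ to each extension, since $a$ is below everything.'' Neither step is right. By the definitions in \cref{notation}, $\U_a^-$ consists of the elements reachable from $a$ by a path whose \emph{first} step is an up-step but which are \emph{not} above $a$; such elements exist as soon as some element of $\U_a^+$ covers, or lies above, something other than $a$. The smallest example is $\P=\{a,e,f\}$ with $a\lessdot e$ and $f\lessdot e$: here $a$ is minimal, $\U_a^+=\{e\}$, $\U_a^-=\{f\}$, and $f\not\sim a$. (What is true for $a$ minimal is $\D_a=\emptyset$ --- at one point you assert the opposite --- so the elements incomparable to $a$ are exactly those of $\U_a^-$, which is precisely why the template places $U_a^-$ \emph{before} $a$ in the first extension.) If all three of your extensions begin with $a$, their intersection makes $a$ the minimum of the realized poset, so your construction can only work when $a$ actually is the minimum of $\P$ --- the case already settled by \cref{lemma1} with two extensions. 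The entire content of the proposition is the treatment of $\U_a^-$.

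The paper's induction is organized around exactly this point, and you would need something like it: list $\U_a^+=\{e_1,\dots,e_r\}$ in prefix left-right order, so that $\U_a^+\cup\{a\}$ is a tree with minimum $a$ realized by two extensions (\cref{lemma1}); for each $e_i$ let $\T_i$ be the maximal subtree meeting $\U_a^+$ only in $e_i$, so that $\U_a^-=\bigsqcup_{i}\bigl(\T_i\setminus\{e_i\}\bigr)$ and each $\T_i$ has $e_i$ as a \emph{maximal} element; apply the \emph{dual} half of the induction hypothesis to each $\T_i$ to obtain realizers of the form $D_i^-\,e_i\,D_i^+$, $D_{i,1}\,e_i$, $D_{i,2}\,e_i$; and interleave these so that the sets $\T_i\setminus\{e_i\}$ land before $a$ in the first extension and after $a$ in the other two. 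This is why the two halves of the statement must be carried along simultaneously in the induction. Your ``peel off one neighbour at a time'' merge, even after correcting the $\U_a^-$ issue, does not by itself supply the dual-form realizers of the subtrees hanging below the elements of $\U_a^+$ that this interleaving requires, so the proposal as written does not establish the proposition.
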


\begin{proof}
    We make a proof by induction on $n$ the number of elements of $\P$, our induction hypothesis containing both cases together.
    For $n=1$ it is true for both cases ($a$ is a minimal and maximal element of $\P$) using the three linear extensions $a$, $a$, and $a$.

    Assume that the results are true for any tree posets with $n$ or fewer elements. Suppose that $\P$ is a tree poset having $n+1$ elements.
    Without loss of generality we can assume that $a$ is a minimal element (otherwise we consider the dual poset and reverse the linear extensions, see Lemma \ref{lem:realizerPop}).
    Denote by $e_1,e_2,\dots , e_r$ the elements of $\U_a^{+}$ taken in prefix left-to-right order, and by $e_{s_1},e_{s_2},\dots , e_{s_r}$ these elements taken in prefix right-to-left order (looking at the tree $\U_a^+\cup \{a\}$ but without writing the root $a$).
    By Lemma \ref{lemma1}, a realizer of $\U_a^{+}$ is given by $e_1 e_2\cdots  e_r$ and $e_{s_1} e_{s_2}\cdots e_{s_r}$.
    See Figure \ref{fig:proofPropTree} for an illustration of the following. 
    
    For all $e_i\in \U_a^{+}$, let $\T_i$ be the biggest tree such that $\T_i\cap \U_a^{+}=\{e_i\}$.
    Then $\U_a^-=\bigsqcup_{i=1}^r \big( \T_i\setminus \{e_i\} \big)$.
    The trees $\T_i$ are all disjoint tree posets and contain $e_i$ as a maximal element. The notations $\D_{e_i},\,\D_{e_i}^-,\,\D_{e_i}^+$ will be used for the tree $\T_i$. In particular, $\D_{e_i}=\T_i \setminus \{e_i\}$.
    Since $|\T_i|\leq n$ because $a\not\in \T_i$, by the induction hypothesis we have a realizer of $\T_i$ for all $i\in [r]$ of the form
    \begin{gather*}
        D_i^{-} \; e_i \; D_i^{+}, \quad  D_{i,1} \; e_i, \quad D_{i,2} \; e_i,
    \end{gather*}
    where $D_i^{-}$ is a linear extension of $\D_{e_i}^{-}$, $D_i^{+}$ is a linear extension of $\D_{e_i}^{+}$, and $D_{i,1}$ and $D_{i,2}$ are linear extensions of $\D_{e_i}=\T_i \setminus \{e_i\}$.
    Let us consider the three following linear orders of $\P$:
    \begin{align}
       \prod_{i=r}^{1} \big(D_{i,1}\big)  \; a \; \prod_{i=1}^{r} e_i ,\label{eq:1}\\
        a \; \prod_{i=1}^{r} \big(D_{i,2} \; e_i \big),\label{eq:2}\\
        a \; \prod_{i=1}^{r} \big(D_{s_i}^{-} \; e_{s_i} \big) \; \prod_{i=r}^{1} D_{i}^{+}.\label{eq:3}
    \end{align}
    
    Let us prove that \eqref{eq:1}, \eqref{eq:2} and \eqref{eq:3} form a realizer of $\P$. The fact that these are three linear extensions is a straightforward verification.
    We recover the trees $\T_i$ for all $i\in [r]$ since $D_i^{-} \; e_i \; D_i^{+}$ is a subword of \eqref{eq:3}, we have $D_{i,1} \; e_i$ is a subword of \eqref{eq:1} and $D_{i,2} \; e_i$ is a subword of \eqref{eq:2}.
    Moreover, we recover the poset $\U_a^+$ since $e_1\cdots e_r$ is a subword of \eqref{eq:1} and \eqref{eq:2} and $e_{s_1}\cdots e_{s_r}$ is a subword of \eqref{eq:3}. 
    In order to be a realizer of $\P$, we need the following incompatibilities:
    $$\T_i\setminus \{e_i\}\not\sim a, \;\forall i,\qquad \quad \T_i\setminus \{e_i\}\not\sim \T_j\setminus\{e_j\},\;\forall i\neq j,$$
    $$\D_{e_i}^+\not\sim e_j, \;\forall i\neq j, \qquad \quad \D_{e_i}^- \not\sim \T_j,\; \text{ for } e_i\not< e_j.$$\\
    By looking at \eqref{eq:1} and \eqref{eq:2} we obtain the incomparabilities $\T_i\setminus \{e_i\}\not\sim a$ for all $i\in [r]$, and also $\T_i\setminus \{e_i\}\not\sim\T_j\setminus \{e_j\}$ for all $i\neq j$.
    By looking at \eqref{eq:1} and \eqref{eq:3}, we obtain $\D_{e_i}^{+} \not\sim e_j$ for all $i\neq j$.
    The remaining incomparabilities $\D_{e_i}^- \not\sim \T_j$ for $e_j\not < e_j$ follow from \eqref{eq:2} and \eqref{eq:3}.

    Finally, these three linear extensions are of the form we wanted; $\prod_{i=r}^{1} D_{i,1}$ is a linear extension of $\U_a^{-}$, $e_1 \cdots e_r$ is a linear extension of $\U_a^{+}$, and $\prod_{i=1}^{r} D_{i,2} \,e_i $ and $\prod_{i=1}^{r} \big(D_{s_i}^{-} \, e_{s_i} \big) \; \prod_{i=r}^{1} D_{i}^{+}$ are linear extensions of $\U_a$.
\end{proof}

The following result implies the well-known result that all tree posets are of dimension at most $3$.
This is needed in order to extract a particular realizer of a tree poset.

\begin{corollary}\label{cor:extlinarbre}
    Let $\P$ be a tree poset and $a\in \P$.
    The three following linear orders form a realizer of $\P$:
        \begin{gather}
            U_a^{-} \; D_{a,1} \; a \; U_a^{+}, \label{treea1} \\
            D_a^{-} \; a \; U_{a,1} \; D_a^{+}, \label{treea2} \\
            D_{a,2} \; a \; U_{a,2} \label{treea3},
        \end{gather}
    where $U_a^-$, $U_a^+$, $U_{a,1}$, and $U_{a,2}$ are the linear extensions obtained in Proposition \ref{prop:Hardcore} for the tree $\U_a \cup \{a\}$, and $D_a^-$, $D_a^+$, $D_{a,1}$, and $D_{a,2}$ are the linear extensions obtained in Proposition \ref{prop:Hardcore} for the tree $\D_a \cup \{a\}$.
\end{corollary}

\begin{proof}
    The fact that these three linear orders are linear extensions is straightforward (see Figure \ref{fig:simpletree}).
    Since these three linear extensions contain respectively the subwords $U_a^-\,a\,U_a^+$, $a\,U_{a,1}$ and $a\,U_{a,2}$, we know by Proposition \ref{prop:Hardcore} that their restrictions to the elements of $\U_a \cup \{a\}$ form a realizer of $\U_a \cup \{a\}$.
    The same goes for $\D_a \cup \{a\}$.
    It only remains to verify that $\D_a^+ \not\sim \U_a \cup \{a\}$, which follows from \eqref{treea2} and \eqref{treea3}, and $\U_a^- \not\sim \D_a\cup \{a\}$, which follows from \eqref{treea1} and \eqref{treea3}.
\end{proof}

\hspace{-.1cm}
\begin{minipage}{0.68\textwidth}
\begin{example} \label{ex:treeslinear}
For the tree on the right, the root is $a$ and, 
$\D_a^+$ is coloured orange, $\D_a^-$ red, $\U_a^-$ blue and $\U_a^+$ green.
Using Corollary \ref{cor:extlinarbre}, we obtain the following realizer:
\begin{align*}
\textcolor{blue}{f\ \ 
e\ \ g}\ \ \textcolor{red}{w}\ \ \textcolor{orange}{s}\ \ \textcolor{red}{v \ \ u}\ \ \textcolor{orange}{r}\ \ a\ \ \textcolor{olive}{i\ \ j},\\
\textcolor{red}{w\ \ v\ \ u}\ \ a\ \ \textcolor{olive}{i}\ \ \textcolor{blue}{e\ \ f\ \ 
g}\ \ \textcolor{olive}{j}\ \ \textcolor{orange}{r\ \ s}, \\
\textcolor{red}{v\ \ w}\ \ \textcolor{orange}{s}\ \ \textcolor{red}{u}\ \ \textcolor{orange}{r}\ \ a\ \ \textcolor{blue}{e}\ \ \textcolor{olive}{j\ \ i}\ \ \textcolor{blue}{f\ \ g
}.
\end{align*}
\end{example}
\end{minipage}
\begin{minipage}{0.25\textwidth}
\begin{tikzpicture}[scale=0.8]
\filldraw[fill=green!20] (-0.3,0.3)--(-1.5,3)--(1.5,3)--(0.3,0.3)--(-0.3,0.3);
\filldraw[fill=blue!20] (0.4,0.5)--(3,0.5)--(1.5,3)--(0.4,0.5);
\filldraw[fill=red!20] (-0.3,-0.3)--(-1.75,-3.5)--(1.75,-3.5)--(0.3,-0.3)--(-0.3,-0.3);
\filldraw[fill=orange!20] (-0.395,-0.5)--(-3,-0.5)--(-1.75,-3.5)--(-0.395,-0.5);

\draw[thick] (.75,-3) -- (0,-2);
\draw[thick] (-.75,-3) -- (0,-2);
\draw[thick] (-.75,-3) -- (-1.85,-1.85);
\draw[thick] (0,-2) -- (-1.15,-1);
\draw[thick] (0,-2) -- (0,0);
\draw[thick] (0,0) -- (-0.6,2.5);
\draw[thick] (0,0) -- (0.65,2.5);
\draw[thick] (0.65,2.5) -- (1.15,0.95);
\draw[thick] (1.15,0.95) -- (1.45,1.95);
\draw[thick] (1.45,1.95) -- (1.95,0.95);

\draw (0,0) node(0) {$\bullet$};
\draw (0.15,0) node[right] {$a$};

\draw (.75,-3) node(-3) [red] {$\bullet$};
\draw (.75,-3) node[right] [red] {$v$};
\draw (-.75,-3) node(-2) [red] {$\bullet$};
\draw (-.75,-3) node[right] [red] {$w$};
\draw (0,-2) node(-1) [red] {$\bullet$};
\draw (0,-2) node[right] [red] {$u$};
\draw (-1.85,-1.85) node(-12) [orange] {$\bullet$};
\draw (-1.85,-1.85) node[left] [orange] {$s$};
\draw (-1.15,-1) node(-11) [orange] {$\bullet$};
\draw (-1.15,-1) node[left] [orange] {$r$};
\draw (1.15,0.95) node(11) [blue] {$\bullet$};
\draw (1.15,0.95) node[right] [blue] {$e$};
\draw (1.95,0.95) node(12) [blue] {$\bullet$};
\draw (1.95,0.95) node[right] [blue] {$f$};
\draw (-0.6,2.5) node(1) [olive] {$\bullet$};
\draw (-0.6,2.5) node[left] [olive] {$i$};
\draw (0.65,2.5) node(2) [olive] {$\bullet$};
\draw (0.65,2.5) node[left] [olive] {$j$};
\draw (1.45,1.95) node(13) [blue] {$\bullet$};
\draw (1.45,1.95) node[right] [blue] {$g$};
\end{tikzpicture}
\end{minipage}

\vspace{.5cm}

Let $\mathbf{Q}$ be a tree poset with $x,z\in \mathbf{Q}$ such that $[x,z]=\{x\lessdot y_1\lessdot \cdots \lessdot y_k\lessdot z\}$ has at least two elements and such that $\D_x^+=\emptyset$, $x$ is only covered by $y_1$, $\U_z^-=\emptyset$ and $z$ covers only $y_k$ (see Figure \ref{fig:chaintree}). We now define some subtrees of $\mathbf{Q}$. Denote by $\T_x$ the tree formed by $x$ and the elements below it, and by $\T_z$ the tree formed by $z$ and the elements above it. 
Let $\T:=\bigsqcup_{j=1}^k \T_{y_j}$ where $\T_{y_j}$ is the tree grafted to $y_i$ as in Figure \ref{fig:chaintree}. Let $\U:= \bigsqcup_{j=1}^k \U_{y_j}$ where $\U_{y_j}$ is defined as before in the tree $\T_{y_j}$, and similarly for $\U^+,\U^-,\D,\D^-,\D^+, \I^+,\I^-$.

\begin{figure}
    \begin{subfigure}{.49\textwidth}
    \centering
    \begin{tikzpicture}
        \begin{scope}[scale=0.3]
        \draw (0,0) node{$\bullet$};
        \draw (0,15) node{$\bullet$};
        \draw (0.2,0) node[right]{$x$};
        \draw (-0.2,15) node[left]{$z$};
        \draw (0,0) -- (1,-2) -- (-1,-2) -- (0,0) -- (0,5);
        \draw[dashed] (0,5)--(0,10);
        \draw (0,10)--(0,15)--(-1,17)--(1,17)--(0,15); 
        \begin{scope}[scale=0.4, xscale=1.5, yshift=6cm, rotate=-45]
        \draw (0,0) node[below right]{$y_1$};
        \draw (0,0) node{$\bullet$};
        \draw (-3,-5) -- (3,5) -- (-3,5) -- (3,-5) -- (-3,-5);
        \draw (1.8,3) -- (4.5,3) -- (3,5);
        \draw (-1.8,-3) -- (-4.5,-3) -- (-3,-5);     
        \end{scope}
        \begin{scope}[scale=0.4, xscale=1.5, yshift=18cm, rotate=-45]
        \draw (0,0) node[below right]{$y_j$};
        \draw (0,0) node{$\bullet$};
        \draw (-3,-5) -- (3,5) -- (-3,5) -- (3,-5) -- (-3,-5);
        \draw (1.8,3) -- (4.5,3) -- (3,5);
        \draw (-1.8,-3) -- (-4.5,-3) -- (-3,-5);     
        \end{scope}
         \begin{scope}[scale=0.4, xscale=1.5, yshift=30cm, rotate=-45]
        \draw (0,0) node[below right]{$y_k$};
        \draw (0,0) node{$\bullet$};
        \draw (-3,-5) -- (3,5) -- (-3,5) -- (3,-5) -- (-3,-5);
        \draw (1.8,3) -- (4.5,3) -- (3,5);
        \draw (-1.8,-3) -- (-4.5,-3) -- (-3,-5);     
        \end{scope}
        \end{scope}  
    \end{tikzpicture}
    \caption{The tree poset $\mathbf{Q}$ of Corollary \ref{cor:extlinarbrexy}.\label{fig:chaintree}}
    \end{subfigure}
    ~
    \begin{subfigure}{.49\textwidth}
    \centering
    \begin{tikzpicture}
        \begin{scope}[scale=0.4]
        \draw (0,2.2) -- (0,5);
        \draw[dashed] (0,5)--(0,10);
        \draw (0,10)--(0,11.8);
        \begin{scope}[scale=0.4, xscale=1.5, yshift=6cm, rotate=-45]
        \draw (0,0) node[below right]{$y^p_1$};
        \draw (0,0) node{$\bullet$};
        \draw (-3,-5) -- (3,5) -- (-3,5) -- (3,-5) -- (-3,-5);
        \draw (1.8,3) -- (4.5,3) -- (3,5);
        \draw (-1.8,-3) -- (-4.5,-3) -- (-3,-5);     
        \end{scope}
        \begin{scope}[scale=0.4, xscale=1.5, yshift=18cm, rotate=-45]
        \draw (0,0) node[below right]{$y^p_j$};
        \draw (0,0) node{$\bullet$};
        \draw (-3,-5) -- (3,5) -- (-3,5) -- (3,-5) -- (-3,-5);
        \draw (1.8,3) -- (4.5,3) -- (3,5);
        \draw (-1.8,-3) -- (-4.5,-3) -- (-3,-5);     
        \end{scope}
         \begin{scope}[scale=0.4, xscale=1.5, yshift=30cm, rotate=-45]
        \draw (0,0) node[below right]{$y^p_{k_p}$};
        \draw (0,0) node{$\bullet$};
        \draw (-3,-5) -- (3,5) -- (-3,5) -- (3,-5) -- (-3,-5);
        \draw (1.8,3) -- (4.5,3) -- (3,5);
        \draw (-1.8,-3) -- (-4.5,-3) -- (-3,-5);     
        \end{scope}
        \end{scope}  
    \end{tikzpicture}
    \caption{The tree $\T_p$.}\label{Tp}
    \end{subfigure}
    \caption{}
\end{figure}

\begin{corollary}\label{cor:extlinarbrexy}
    Let $\mathbf{Q}$ be as above and keep the notations from Corollary \ref{cor:extlinarbre}. The three following linear orders form a realizer of $\mathbf{Q}$:
    \begin{gather}
        \prod_{j=k}^{1} \left( U_{y_j}^{-} \; D_{y_j,1} \right)  \; T_{x,1} \; x \; \prod_{j=1}^{k} \left(y_j \; U_{y_j}^{+}\right) \; z \; T_{z,1}, \label{extlinarbre1xy}\\
        T_{x,2} \; x \; \prod_{j=1}^{k} \left(D_{y_j}^{-} \; y_j \right) \; z \; T_{z,2} \; \prod_{j=k}^{1} \left(U_{y_j,1} \; D_{y_j}^{+}\right) ,\label{extlinarbre2xy}\\
        T_{x,3} \; x \; \prod_{j=1}^k \left( D_{y_j,2} \; y_j\; U_{y_j,2}\right) \; z \; T_{z,3},\label{extlinarbre3xy}
    \end{gather}
    where $T_{x,1}\,x$, $T_{x,2}\,x$, $T_{x,3}\,x$ is any realizer of $\T_x$, and $z\,T_{z,1}$, $z\,T_{z,2}$, $z\,T_{z,3}$ is any realizer of $\T_z$.
\end{corollary}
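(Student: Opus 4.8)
The plan is to follow the template announced in the Remark after \cref{lem:inducedlinext}, exactly as in the proof of \cref{cor:extlinarbre}. Throughout, write $\mathbf{U}_{y_j}^{\pm}$, $\mathbf{D}_{y_j}^{\pm}$, $\mathbf{I}_{y_j}^{\pm}$ for the sets of \cref{notation} computed inside the tree $\mathbf{T}_{y_j}$ rooted at $y_j$. First I would check that \eqref{extlinarbre1xy}, \eqref{extlinarbre2xy} and \eqref{extlinarbre3xy} are linear extensions of $\mathbf{Q}$. The cover relations of $\mathbf{Q}$ are precisely those internal to $\mathbf{T}_x$, to $\mathbf{T}_z$, and to each $\mathbf{T}_{y_j}$, together with the spine covers $x\lessdot y_1\lessdot\cdots\lessdot y_k\lessdot z$. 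In each of the three words, the letters of $\mathbf{T}_{y_j}$ occur, in order, as \eqref{treea1}, \eqref{treea2}, respectively \eqref{treea3} of \cref{cor:extlinarbre} applied to $(\mathbf{T}_{y_j},y_j)$; the letters of $\mathbf{T}_x$ occur as $T_{x,1}\,x,\ T_{x,2}\,x,\ T_{x,3}\,x$; the letters of $\mathbf{T}_z$ as $z\,T_{z,1},\ z\,T_{z,2},\ z\,T_{z,3}$; and $x,y_1,\dots,y_k,z$ appear in that order in all three. Hence every cover is respected (see \cref{fig:chaintree}).

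Applying \cref{lem:inducedlinext} to the restrictions just described, and using \cref{cor:extlinarbre} together with the hypotheses on $\mathbf{T}_x$ and $\mathbf{T}_z$, I recover the subposets $\mathbf{T}_x$, $\mathbf{T}_z$ and $\mathbf{T}_{y_j}$ for all $j$. It then remains to exhibit, for each incomparable pair of $\mathbf{Q}$ joining two of these recovered pieces or involving a spine element, two of the three extensions that order the pair oppositely. A short inspection of the tree $\mathbf{Q}$ shows that these pairs are exactly: $x\not\sim\mathbf{I}_{y_j}^{-}$ and $z\not\sim\mathbf{I}_{y_j}^{+}$ for all $j$; $y_i\not\sim\mathbf{I}_{y_j}^{-}$ for $i<j$ and $y_i\not\sim\mathbf{I}_{y_j}^{+}$ for $i>j$; $\mathbf{D}_x^{-}\not\sim\mathbf{I}_{y_j}^{-}$ and $\mathbf{U}_z^{+}\not\sim\mathbf{I}_{y_j}^{+}$ for all $j$; and, for $i<j$, $\mathbf{I}_{y_i}^{+}\not\sim(\mathbf{T}_{y_j}\setminus\{y_j\})$ together with $\mathbf{D}_{y_i}^{-}\not\sim\mathbf{I}_{y_j}^{-}$ (the unique comparability between $\mathbf{T}_{y_i}\setminus\{y_i\}$ and $\mathbf{T}_{y_j}\setminus\{y_j\}$ being $\mathbf{D}_{y_i}^{-}<\mathbf{U}_{y_j}^{+}$). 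Every such pair is handled in the same way: \eqref{extlinarbre3xy} lists the pieces in ``spine order'' $\mathbf{D}_x^{-},x,(\mathbf{T}_{y_1}\text{-block}),y_1,\dots,y_k,(\mathbf{T}_{y_k}\text{-block}),z,\mathbf{U}_z^{+}$; \eqref{extlinarbre1xy} gathers all ``$\mathbf{I}^{-}$-content'' (that is, $\mathbf{D}_x^{-}$ and $\mathbf{D}_{y_j}\sqcup\mathbf{U}_{y_j}^{-}$ for each $j$) to the left of $x$ and of every $y_i$, in reverse spine order; and \eqref{extlinarbre2xy} gathers all ``$\mathbf{I}^{+}$-content'' (that is, $\mathbf{U}_z^{+}$ and $\mathbf{U}_{y_j}\sqcup\mathbf{D}_{y_j}^{+}$ for each $j$) to the right of $z$ and of every $y_i$, in reverse spine order. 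Reading off the appropriate two sides then witnesses each listed incomparability.

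The routine but slightly delicate point is this last step: one must make sure the enumeration of straddling incomparable pairs is complete — in particular the ``long range'' ones between distinct $\mathbf{T}_{y_i}$ and $\mathbf{T}_{y_j}$, which are all incomparable save for $\mathbf{D}_{y_i}^{-}<\mathbf{U}_{y_j}^{+}$ — and that for each of them at least one of \eqref{extlinarbre1xy}, \eqref{extlinarbre2xy} genuinely reverses the spine order that \eqref{extlinarbre3xy} imposes on the two relevant blocks (the pairs touching $\mathbf{U}^{+}$-content are precisely the ones that force the use of \eqref{extlinarbre2xy} rather than \eqref{extlinarbre1xy}). The verification that the three words are linear extensions, and the recovering step, are entirely straightforward.
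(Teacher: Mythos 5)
Your proposal is correct and takes essentially the same route as the paper's proof: check that the three words are linear extensions, recover $\T_x$, $\T_z$ and each $\T_{y_j}$ as subwords via \cref{cor:extlinarbre} and \cref{lem:inducedlinext}, and then verify the remaining cross-tree incomparabilities by pairs of extensions. Your enumeration of those straddling incomparable pairs is complete and equivalent to the paper's list \eqref{treeincomp1}--\eqref{treeincomp5}, and the structural description of \eqref{extlinarbre1xy} and \eqref{extlinarbre2xy} as gathering the $\I^-$- and $\I^+$-content in reverse spine order does witness each of them against \eqref{extlinarbre3xy}.
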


\begin{proof}
    It is straightforward to check that these three linear orders are linear extensions. 
    For all $j\in [k]$, the realizer formed by $U_{y_{j}}^{-} \, D_{y_{j},1} \, {y_{j}} \, U_{y_{j}}^{+}$, and $D_{y_{j}}^{-} \, {y_{j}} \, U_{y_{j},1} \, D_{y_{j}}^{+}$ and $D_{y_{j},2} \, {y_{j}} \, U_{y_{j},2}$ of Corollary \ref{cor:extlinarbre} for $\T_{y_{j}}$ is obtained as subwords of \eqref{extlinarbre1xy}, \eqref{extlinarbre2xy} and \eqref{extlinarbre3xy}.
    Thus we know that for all $j\in [k]$ we recover the subposet $\T_{y_j}$.
    It is also clear that we recover the subposets $\T_x$ and $\T_z$.
    It remains to prove the incomparabilities:
    \begin{align}
        &\T_x \not\sim \D \sqcup \U^-,\qquad
        \T_z\not\sim \U \sqcup \D^+,\qquad \D^+_{y_i}, \U^-_{y_i}\not\sim y_j \text{ for all } i\neq j,\label{treeincomp1}\\
        &\U_{y_i}\not\sim y_j  \text{ for all }  i<j, \qquad \U_{y_i}\not\sim \U_{y_j} \sqcup \D^+_{y_j} \text{ for all } i\neq j,\label{treeincomp2}\\
        &\D^-_{y_i}\not\sim \U^+_{y_j}\text{ and } \D_{y_i}\not\sim y_j \text{ for all } i> j,\qquad   \D_{y_i}\not\sim \D_{y_j} \sqcup \U^-_{y_j} \text{ for all } i\neq j .\label{treeincomp3}
    \end{align}
    To verify the incomparabilities of \eqref{treeincomp1} it suffices to look at \eqref{extlinarbre1xy} and \eqref{extlinarbre2xy}. For \eqref{treeincomp2}, it suffices to look at \eqref{extlinarbre2xy} and \eqref{extlinarbre3xy}. Finally, for the incomparabilities of \eqref{treeincomp3} it suffices to look at \eqref{extlinarbre1xy} and \eqref{extlinarbre3xy}.
\end{proof}

\section{Non-extremal grafting on cycle posets}
\label{sectionTreesonCovers}

By non-extremal we mean the vertices that are neither minimal nor maximal elements.
Assume $n\geq 2$ and consider a cycle poset $\CC_n$ such that $x_{\lfloor \frac{p}{2}\rfloor +1}\lessdot y^p_1\lessdot \cdots \lessdot y^p_{k_p} \lessdot z_{\lceil \frac{p}{2}\rceil}$ is a saturated chain, for every $p\in [2n]$. These $p$ correspond to the edge labelling defined in Definition \ref{def:crown} and represented in Figure \ref{fig:crown} (see also Figure \ref{fig:runningexunicycle}). We denote by $ \TT(\CC_n)$ a family of rooted trees indexed by the elements of $\CC_n$ such that
$\T_{x_i}=\{r_{x_i}\}$ and $\T_{z_i}=\{r_{z_{i}}\}$, for every $i\in [n]$.
Thus, the grafting of $ \TT(\CC_n)$ on $\CC_n$, which defines a unicycle poset $\P_n$, only grafts nontrivial trees on the non-extremal elements of $\CC_n$ (see Figure \ref{fig:covercrown}). We denote by $\T_p:=\bigsqcup_{j=1}^{k_p} \T_{y^p_j}$ the tree obtained by grafting $\{(\T_{y^p_j},\,y^p_j)\mid~1\leq j\leq k_p\}$ on the saturated chain $y^p_1\lessdot \cdots \lessdot y^p_{k_p}$ of $\CC_n$ for every $p\in [2n]$ (see Figure \ref{Tp}). See the paragraph before Corollary \ref{cor:extlinarbrexy}, where we defined the $\U,\D,\I$ related to this tree.
For $n=1$, we only have two such trees: $\T_1$ on the maximal chain containing $a$, and $\T_2$ on the one containing $b$. This gives us a unicycle poset $\P_1$ (see Figure \ref{fig:coversunicycleC1}).
Using these notations, for any $p\in [2n]$ and any word $X$ denote:

\begin{align*}
    A_p(X) &:=\; \prod_{j=k_p}^{1} \left(U_{y_{j}^p}^{-} \; D_{{y_{j}^p},1} \right) \; X \; \prod_{j=1}^{k_p} \left(y^p_j \; U_{y^p_j}^+\right),\\
    B_p(Z) &:=\; \prod_{j=1}^{k_p} \left(D_{y^p_j}^- \; y^p_{j}\right) \; Z \; \prod_{j=k_p}^{1} \left( U_{y^p_j,1} \; D_{y^p_j}^+ \right) ,\\
    C_p &:=\; \prod_{j=1}^{k_p} \left(D_{y^p_j,2} \; y^p_j\; U_{y^p_j,2}\right) .
\end{align*}

\begin{remark} \label{remarkpartsAB}
In the last section of this paper, we will need the following observation. We have that $A_p(X)$ is composed of three parts; it starts with a linear extension of $\I^-_p$, then $X$, and then a linear extension of $\{y_1^p,\dots,y_{k_p}^p\} \sqcup \U_p^+$. Similarly for $B_p(Z)$, the first part is a linear extension of $\{y_1^p,\dots,y_{k_p}^p\} \sqcup \D_p^-$, then $Z$, and then a linear extension of $\I^+_p$.
\end{remark}

Recall that we always take the indices related to the crowns cyclically. We can rewrite the equations of Corollary \ref{cor:extlinarbrexy} where $x=x_{\lfloor \frac{p}{2}\rfloor +1}$, $z= z_{\lceil \frac{p}{2}\rceil}$ and the $y_j$'s replaced by the $y_j^{p}$'s, respectively for $p=2i-1$ and $p=2i$, in the following way where the realizer on the left if one for $\T_{2i-1}\sqcup \{x_i,z_i\}$, and on the right one for $\T_{2i}\sqcup \{x_{i+1},z_i\}$:
\begin{align*}
&A_{2i-1}(x_i)\, z_i, && A_{2i}(x_{i+1})\, z_i,\\
&x_i \; B_{2i-1}(z_i), && x_{i+1} \; B_{2i}(z_i),\\
&x_i \; C_{2i-1} \; z_i, && x_{i+1} \; C_{2i} \; z_i.
\end{align*}

Let us set $A_j^i(X):= A_i(A_j(X))$ and $B_j^i(X):= B_i(B_j(X))$.

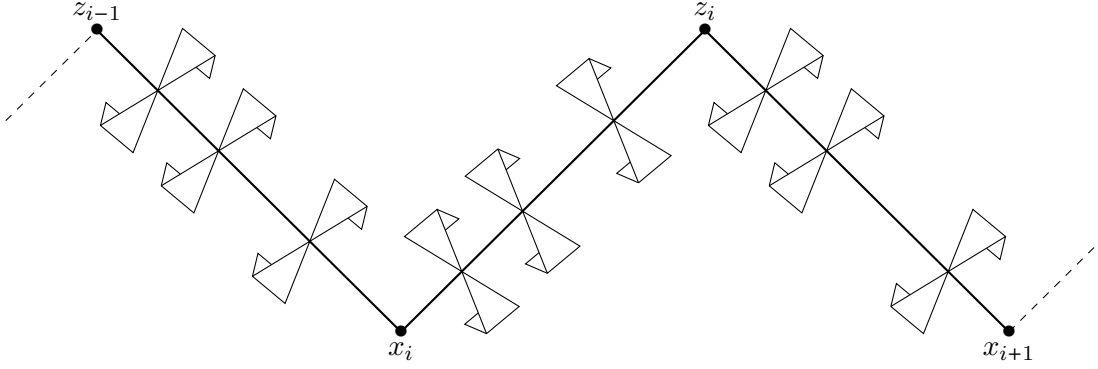
\begin{figure}
    \centering
    \begin{tikzpicture}
        \begin{scope}[scale=3]
            \draw[thick] (0,1)--(1,0)--(2,1)--(3,0);
            \draw[dashed] (-0.3,0.7)--(0,1);
            \draw[dashed] (3,0)--(3.3,0.3);
            \draw (0,1) node[above]{$z_{i-1}$};
            \draw (1,0) node[below]{$x_i$};
            \draw (2,1) node[above]{$z_{i}$};
            \draw (3,0) node[below]{$x_{i+1}$};
            \draw (0,1) node{$\bullet$};
            \draw (1,0) node{$\bullet$};
            \draw (2,1) node{$\bullet$};
            \draw (3,0) node{$\bullet$};
            \begin{scope}[xshift=0.2cm, yshift=0.8cm, scale=0.07, rotate=-40]
                \draw (-1,-3)--(1,3)--(-1,3)--(1,-3)--(-1,-3);
                \draw (1,3)--(1.5,2)--(0.67,2);
                \draw (-1,-3)--(-1.5,-2)--(-0.67,-2);
            \end{scope}
            
            \begin{scope}[xshift=0.4cm, yshift=0.6cm, scale=0.07, rotate=-40]
                \draw (-1,-3)--(1,3)--(-1,3)--(1,-3)--(-1,-3);
                \draw (1,3)--(1.5,2)--(0.67,2);
                \draw (-1,-3)--(-1.5,-2)--(-0.67,-2);
            \end{scope}
            
            \begin{scope}[xshift=0.7cm, yshift=0.3cm, scale=0.07, rotate=-40]
                \draw (-1,-3)--(1,3)--(-1,3)--(1,-3)--(-1,-3);
                \draw (1,3)--(1.5,2)--(0.67,2);
                \draw (-1,-3)--(-1.5,-2)--(-0.67,-2);
            \end{scope}
            
            \begin{scope}[xshift=1.2cm, yshift=0.2cm, scale=0.07, rotate=40]
                \draw (-1,-3)--(1,3)--(-1,3)--(1,-3)--(-1,-3);
                \draw (1,3)--(1.5,2)--(0.67,2);
                \draw (-1,-3)--(-1.5,-2)--(-0.67,-2);
            \end{scope}
            
            \begin{scope}[xshift=1.4cm, yshift=0.4cm, scale=0.07, rotate=40]
                \draw (-1,-3)--(1,3)--(-1,3)--(1,-3)--(-1,-3);
                \draw (1,3)--(1.5,2)--(0.67,2);
                \draw (-1,-3)--(-1.5,-2)--(-0.67,-2);
            \end{scope}
            
            \begin{scope}[xshift=1.7cm, yshift=0.7cm, scale=0.07, rotate=40]
                \draw (-1,-3)--(1,3)--(-1,3)--(1,-3)--(-1,-3);
                \draw (1,3)--(1.5,2)--(0.67,2);
                \draw (-1,-3)--(-1.5,-2)--(-0.67,-2);
            \end{scope}
            
            \begin{scope}[xshift=2.2cm, yshift=0.8cm, scale=0.07, rotate=-40]
                \draw (-1,-3)--(1,3)--(-1,3)--(1,-3)--(-1,-3);
                \draw (1,3)--(1.5,2)--(0.67,2);
                \draw (-1,-3)--(-1.5,-2)--(-0.67,-2);
            \end{scope}
            
            \begin{scope}[xshift=2.4cm, yshift=0.6cm, scale=0.07, rotate=-40]
                \draw (-1,-3)--(1,3)--(-1,3)--(1,-3)--(-1,-3);
                \draw (1,3)--(1.5,2)--(0.67,2);
                \draw (-1,-3)--(-1.5,-2)--(-0.67,-2);
            \end{scope}
            
            \begin{scope}[xshift=2.8cm, yshift=0.2cm, scale=0.07, rotate=-40]
                \draw (-1,-3)--(1,3)--(-1,3)--(1,-3)--(-1,-3);
                \draw (1,3)--(1.5,2)--(0.67,2);
                \draw (-1,-3)--(-1.5,-2)--(-0.67,-2);
            \end{scope}
            
        \end{scope}    
    \end{tikzpicture}
    \caption{A part of the crown illustrating Proposition \ref{horizCn} with added trees on its covers from left to right $\T_{2i-2}$, $\T_{2i-1}$ and $\T_{2i}$.}
    \label{fig:covercrown}
\end{figure}

\begin{proposition}\label{horizCn}
Let $n\geq 4$. The following three linear orders form a realizer of $\P_n$:
    \begin{gather}
        A_1(x_1) \; x_2\; B_2(z_1)\; C_{2n}\;\prod_{i=2}^{n-1}\Big(C_{2i-1}\; A_{2i}(x_{i+1})\; z_i \Big)\;B_{2n-1}(z_n), \label{linextABC1}\\
        A_3(x_2)\; \big(x_{3}\; B_4(z_2)\big)\; C_2\;\prod_{i=3}^{n-1}\Big(x_{i+1}\; B_{2i}^{2i-1}(z_i) \Big)\; C_{2n-1}\; A_{2n}(x_1)\; z_n\; B_1(z_1), \label{linextABC2}\\
        x_1\; A_{2n-1}(x_n) \; B_{2n}(z_n)\; C_1 \; \prod_{i=n-1}^3\Big(C_{2i}\; A_{2i-1}(x_i)\; z_i \Big)\; C_4\; A_2(x_2)\; B_3(z_2) \; z_1. \label{linextABC3}
    \end{gather}
\end{proposition}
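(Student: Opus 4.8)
The plan is to follow the scheme laid out in the Remark. The key preliminary observations are two. First, every cover relation of $\P$ lies inside one of the subposets $\mathbf{Q}_p$, $p\in[2n]$, where $\mathbf{Q}_p$ consists of the saturated chain $x\lessdot y^p_1\lessdot\dots\lessdot y^p_{k_p}\lessdot z$ of $\CC_n$ labelled $p$ (its two endpoints $x,z$ being among the $x_i$'s and $z_i$'s) together with the grafted trees $\T_{y^p_j}$; thus each $\mathbf{Q}_p$ is exactly of the kind treated in \cref{cor:extlinarbrexy}, the trees attached to $x$ and to $z$ being trivial, and the realizer $A_p(x)\,z$, $x\,B_p(z)$, $x\,C_p\,z$ of \cref{cor:extlinarbrexy} is at our disposal. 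Second, two elements of two distinct forests $\T_p$ and $\T_q$ are incomparable in $\P$, and an element of $\T_p$ is incomparable to every vertex of $\C_n$ other than the two endpoints of the cover $p$; this is because two distinct covers of $\C_n$ have their interior chains pairwise $\CC_n$-incomparable, and $\CC_n$-incomparable to all crown vertices but their own endpoints.

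Using the first observation, to check that \eqref{linextABC1}, \eqref{linextABC2}, \eqref{linextABC3} are linear extensions of $\P$ it suffices to check that the restriction of each of the three words to the letters of $\mathbf{Q}_p$ is a linear extension of $\mathbf{Q}_p$, for every $p$. A direct reading of the three displayed words (see \cref{fig:covercrown}) shows that, for each fixed $p$, these restrictions are precisely $A_p(x)\,z$, $x\,B_p(z)$ and $x\,C_p\,z$, one per word; so they are linear extensions of $\mathbf{Q}_p$, and by \cref{cor:extlinarbrexy} a realizer of it — hence all the $\mathbf{Q}_p$ are recovered. Furthermore, erasing from \eqref{linextABC1}, \eqref{linextABC2}, \eqref{linextABC3} every letter outside $\{x_1,\dots,x_n,z_1,\dots,z_n\}$ leaves exactly the three words \eqref{exten1}, \eqref{exten2}, \eqref{exten3}, so the underlying crown $\C_n$ is recovered via \cref{Lem:ExtLinCouronne}. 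At this stage every relation of $\P$, and every incomparability internal to a single $\mathbf{Q}_p$ or to $\C_n$, has been accounted for.

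By the second observation, the outstanding incomparabilities are exactly $\T_p\not\sim\T_q$ for $p\ne q$ and $\T_p\not\sim w$ for $w$ a vertex of $\C_n$ not an endpoint of the cover $p$. Each of these follows from the left-right argument of the Remark once one records, for each of the three words, the order in which the forests $\T_r$ appear along the crown — the crown vertices being interleaved as in \eqref{exten1}, \eqref{exten2}, \eqref{exten3}: \eqref{linextABC1} lists the forests essentially in the increasing order $1,2,\dots,2n$, \eqref{linextABC3} essentially in the reverse order, and \eqref{linextABC2} in a third, shifted order. Consequently any two distinct forests, and any forest together with a non-incident crown vertex, occur in opposite relative orders in at least two of the three words.

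The one place where some care is needed is a single wrinkle in this last argument: in \eqref{linextABC2} the forest $\T_{2i-1}$ is split around the forest $\T_{2i}$ inside $B_{2i}^{2i-1}(z_i)$, so the pair $\{\T_{2i-1},\T_{2i}\}$ — the two covers of $\C_n$ meeting at $z_i$ — is not cleanly separated by \eqref{linextABC2}; for this pair one uses \eqref{linextABC1} and \eqref{linextABC3}, in which $\T_{2i-1}$ entirely precedes $\T_{2i}$, respectively entirely follows it. Carrying this out, along with the handful of boundary indices $p\in\{1,2,3,2n-1,2n\}$ that sit at the seams of the three orders, is a routine inspection of the three displayed words, and is the most delicate part of the proof.
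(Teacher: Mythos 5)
Your proposal is correct and follows essentially the same route as the paper: recover each $\T_p\sqcup\{x,z\}$ via \cref{cor:extlinarbrexy} and the crown via \cref{Lem:ExtLinCouronne}, then settle the remaining incomparabilities ($\T_p\not\sim\T_q$ and $\T_p\not\sim w$ for non-incident crown vertices $w$) by comparing relative positions in the three words, treating the merged pair inside $B_{2i}^{2i-1}$ separately via \eqref{linextABC1} and \eqref{linextABC3}. The only difference is presentational: where you defer the forest-versus-vertex checks to direct inspection of the seams, the paper reduces most of them to the already-established crown incomparabilities (a vertex incomparable to both endpoints of cover $p$ precedes $x$ in one crown extension and follows $z$ in another, hence straddles $\T_p$) and then disposes of the two remaining vertices using the word containing $C_p$.
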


\begin{proof}
    It is straightforward to check that these are three linear extensions of $\P_n$ (see Figure \ref{fig:covercrown}).
    By forgetting about the $A$, $B$ and $C$, meaning looking at the subwords on the elements $x_i$ and $z_i$, we recover using Lemma \ref{lem:ExtLinCouronne} the crown $\C_n$.
    Looking at the subwords of these linear extensions on the elements of $\T_{2i-1} \sqcup \{x_i,z_i\}$, we recover the realizer $A_{2i-1}(x_i)\, z_i$, $x_i \, B_{2i-1}(z_i)$ and $x_i \; C_{2i-1} \, z_i$ of this subposet, the same being true for $\T_{2i} \sqcup \{x_{i+1},z_i\}$ (see above Proposition \ref{horizCn}).
    The remaining incomparabilities to prove are the following:
    \begin{align}
        &\T_i \not\sim \T_j, \,\forall i\neq j,\label{incompABC1}\\
        &\T_{2i-1} \not\sim x_j,z_j, \,\forall j\neq i,\label{incompABC2}\\
        &\T_{2i} \not\sim x_j,z_k, \,\forall j\neq i+1,\; \forall k\neq i.\label{incompABC3}
    \end{align}
    
    Let $i\neq j$.
    All the elements of $\T_i$ are in $A_i(X)$, but also in $B_i(Z)$ and in $C_i$, regardless of $X$ and $Z$.
    Thus proving that $\T_i \not\sim \T_j$ is equivalent to showing that in a linear extension one of the symbol $A$, $B$ or $C$ with a subscript $i$ appears before one of those symbols with subscript $j$, which gives $\T_i<\T_j$, and, in another linear extension they appear in the opposite order, which gives $\T_j<\T_i$.
    Thus we just look at the sequences of subscripts of the symbols $A$, $B$ and $C$ obtained from \eqref{linextABC1}, \eqref{linextABC2} and \eqref{linextABC3}:
    \begin{gather*}
        1,2,2n,\prod_{i=2}^{n-1} (2i-1,2i),2n-1,\\
        3,4,2,\prod_{i=3}^{n-1} \substack{2i-1 \\ 2i},2n-1,2n,1,\\
        2n-1,2n,1,\prod_{i=n-1}^3 (2i,2i-1) ,4,2,3,
    \end{gather*}
    where, in the second linear extension, because of $B_{2i}^{2i-1}(z_i)$, we can conclude neither $\T_{2i-1}<\T_{2i}$ nor $\T_{2i}<\T_{2i-1}$.
    It is thus straightforward to verify the incomparabilities \eqref{incompABC1} for any pair $i\neq j$.
    
    For \eqref{incompABC2} and \eqref{incompABC3}, let $i\in[n]$.
    First remark that, any letter that comes before $x_i$ in \eqref{exten1}, comes before $\T_{2i-1}$ in \eqref{linextABC1}.
    Similarly, any letter that comes after $z_i$ in \eqref{exten1}, comes after $\T_{2i-1}$ in \eqref{linextABC1}.
    The same goes for the pairs \eqref{exten2}, \eqref{linextABC2} and, \eqref{exten3}, \eqref{linextABC3}.
    Because $x_i<z_i$, an element incomparable to both $x_i$ and $z_i$ appears before $x_i$ in one of the linear extensions and after $z_i$ in another.
    Therefore, for any $j,k\in[n]$, having both $x_i\not\sim x_j,z_k$ and $z_i\not\sim x_j,z_k$ imply $\T_{2i-1}\not\sim x_j,z_k$.
    We proceed similarly for $\T_{2i}$.
    Thus we have (see Figure \ref{fig:covercrown}):
    \begin{align}
        &\T_{2i-1} \not\sim x_j,z_k,\quad \text{ for all } j\neq i,i+1\text{ and }\, k\neq i,i-1,\\
        &\T_{2i} \not\sim x_j,z_k,\quad \text{ for all } j\neq i,i+1\text{ and }\, k\neq i,i+1.
    \end{align}
    
    To finish the proof, we need to look at the linear extension containing $C_{2i-1}$.
    In this linear extension we have $z_{i-1}<\T_{2i-1}<x_{i+1}$.
    Looking at the two others, one can easily verify that, at least one of the two has $x_{i+1}<\T_{2i-1}<z_{i-1}$ and \eqref{incompABC2} is satisfied.
    Similarly, in the linear extension containing $C_{2i}$, we have $z_{i+1}<\T_{2i}<x_{i-1}$.
    In at least one of the others, we have $x_{i-1}<\T_{2i}<z_{i+1}$ and \eqref{incompABC3} is satisfied. This finishes the proof.
\end{proof}

For $n=3$, it is exactly the same formula as in Proposition \ref{horizCn} but without the middle products of \eqref{linextABC2} and \eqref{linextABC3}. The proof is exactly the same but we state this particular case separately as we will need to treat $\P_3$ separately in the last section.

\begin{proposition}\label{horizC3}
The following three linear orders form a realizer of $\P_3$:
    \begin{gather}
        A_1(x_1) \; x_2 \; B_2(z_1) \; C_{6} \; C_{3}\; A_{4}(x_3)\; z_2 \; B_{5}(z_3), \label{C3linextABC1}\\
        A_3(x_2)\; x_{3}\; B_4(z_2) \; C_2\; C_{5}\; A_{6}(x_1)\; z_3\; B_1(z_1),\label{C3linextABC2}\\
        x_1\; A_{5}(x_3) \; B_{6}(z_3)\; C_1 \; C_4\; A_2(x_2)\; B_3(z_2) \; z_1. \label{C3linextABC3}
    \end{gather}
\end{proposition}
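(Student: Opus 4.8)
The plan is to imitate the proof of \cref{horizCn} essentially verbatim, noting that \eqref{C3linextABC1}, \eqref{C3linextABC2} and \eqref{C3linextABC3} are just \eqref{linextABC1}, \eqref{linextABC2} and \eqref{linextABC3} with the (now empty) middle products of \eqref{linextABC2} and \eqref{linextABC3} deleted. As in \cref{horizCn}, one first checks that the three words are linear extensions of $\P$; this is straightforward from the shape of the blocks $A_p$, $B_p$, $C_p$ given by \cref{cor:extlinarbrexy} together with the fact that the subwords on the cycle elements $\{x_1,x_2,x_3,z_1,z_2,z_3\}$ coincide with \eqref{exten1}, \eqref{exten2}, \eqref{exten3} for $n=3$ (see \cref{fig:covercrown}).

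Next I would recover subposets. Forgetting the $A,B,C$ blocks, that is, restricting the three words to $\{x_i,z_i:i\in[3]\}$, recovers the realizer of $\C_3$ of \cref{Lem:ExtLinCouronne}. Restricting to $\T_{2i-1}\sqcup\{x_i,z_i\}$ recovers, for each $i$, the realizer $A_{2i-1}(x_i)\,z_i$, $x_i\,B_{2i-1}(z_i)$, $x_i\,C_{2i-1}\,z_i$ of that subposet coming from \cref{cor:extlinarbrexy}, and analogously for $\T_{2i}\sqcup\{x_{i+1},z_i\}$. By \cref{lem:inducedlinext} every relation internal to these subposets is then realized, so only the incomparabilities \eqref{incompABC1}, \eqref{incompABC2} and \eqref{incompABC3}, specialised to $n=3$, remain.

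For \eqref{incompABC1} I would read off the sequences of subscripts of the symbols $A,B,C$ in the three words, namely $1,2,6,3,4,5$, then $3,4,2,5,6,1$, then $5,6,1,4,2,3$; since every element of $\T_i$ lies inside whichever of $A_i(X)$, $B_i(Z)$, $C_i$ occurs, it suffices to check that for each pair $i\neq j$ the subscript $i$ precedes $j$ in one of these sequences and follows it in another, a finite verification over the $\binom{6}{2}=15$ pairs. For \eqref{incompABC2} and \eqref{incompABC3} I would first reuse the argument of \cref{horizCn}: a cycle letter lying left of $x_i$ in \eqref{exten1} lies left of $\T_{2i-1}$ in \eqref{C3linextABC1}, one lying right of $z_i$ lies right of $\T_{2i-1}$, and similarly for the pairs \eqref{exten2}/\eqref{C3linextABC2} and \eqref{exten3}/\eqref{C3linextABC3}; since $x_i<z_i$, this already yields $\T_{2i-1}\not\sim x_j$ for $j\neq i,i+1$ and $\T_{2i-1}\not\sim z_k$ for $k\neq i,i-1$, and symmetrically for $\T_{2i}$. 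The leftover pairs $(\T_{2i-1},x_{i+1})$, $(\T_{2i-1},z_{i-1})$, $(\T_{2i},x_{i-1})$ and $(\T_{2i},z_{i+1})$ I would settle using that the word carrying $C_{2i-1}$ forces $z_{i-1}<\T_{2i-1}<x_{i+1}$ in it and the one carrying $C_{2i}$ forces $z_{i+1}<\T_{2i}<x_{i-1}$.

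The step I expect to be the main obstacle is precisely this last one. For $n=3$ the cyclic indices $i-1,i,i+1$ exhaust $\{1,2,3\}$ and the extreme letters of \eqref{exten3} are exactly $x_1$ and $z_1$, so here one must be a little more careful than in \cref{horizCn}: the clean ``reverse word'' dichotomy there (that at least one of the two remaining words has $x_{i+1}<\T_{2i-1}<z_{i-1}$) does not literally apply, and must be replaced by a direct chain-by-chain check that, for each of the six chains $p\in[6]$, both opposite orderings of a tree against the relevant cycle element really do occur among \eqref{C3linextABC1}, \eqref{C3linextABC2}, \eqref{C3linextABC3} --- possibly in different pairs of words (for instance $\T_1\not\sim x_2$ is witnessed by \eqref{C3linextABC1} and \eqref{C3linextABC2}, whereas $\T_1\not\sim z_3$ is witnessed by \eqref{C3linextABC1} and \eqref{C3linextABC3}). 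This is a short explicit bookkeeping of block positions rather than a conceptual difficulty; once it is done, \cref{lem:inducedlinext} together with the recovered subposets completes the proof.
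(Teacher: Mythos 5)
Your proposal is correct and takes essentially the same route as the paper, which simply declares that the proof of \cref{horizCn} carries over verbatim to $n=3$ (with the middle products removed). Your additional observation --- that for $n=3$ the two witnesses for $\T_{2i-1}\not\sim x_{i+1}$ and $\T_{2i-1}\not\sim z_{i-1}$ may come from different pairs of linear extensions, so the last step is best done by a direct chain-by-chain check rather than by the single dichotomy phrased in \cref{horizCn} --- is a legitimate refinement of the same argument, not a different approach.
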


\begin{proposition}\label{horizC2}
The following three linear orders form a realizer of $\P_2$:
    \begin{align}
        A_1(x_1)\; x_2\; B_{2}(z_1)\; B_4^3 (z_2),\label{C_2BEV1}\\
        A_3^2 (x_2)\; A_4(x_1)\; z_2\; B_1(z_1),\label{C_2BEV2}\\
        x_1\; C_4\; x_2\; C_1\; C_3\; z_2\; C_2\; z_1.\label{C_2BEV3}
    \end{align}
\end{proposition}

\begin{proof}
    It is straightforward to check that these are three linear extensions of $\P_2$.
    By forgetting about the $A$, $B$ and $C$, meaning looking at the subwords on the elements $x_i$ and $z_i$, we recover the crown $\C_2$.
    Looking at the subwords of these linear extensions on the elements of $\T_{2i-1} \sqcup \{x_i,z_i\}$, we recover the realizer $A_{2i-1}(x_i)\, z_i$, $x_i \, B_{2i-1}(z_i)$ and $x_i \; C_{2i-1} \, z_i$ of this subposet, the same being true for $\T_{2i} \sqcup \{x_{i+1},z_i\}$ (see above Proposition \ref{horizCn}).
    The remaining incomparabilities to prove are the following:
    \begin{align*}
         &\T_i \not\sim \T_j ,\, \forall i\neq j,\qquad \T_1\not\sim x_2,z_2,\qquad \T_3 \not\sim x_1,z_1,\\
        &\T_2\not\sim x_1,z_2,\qquad \T_4\not\sim x_2,z_1.\\
    \end{align*}
Let $i\neq j$. In a similar manner as in Proposition \ref{horizCn}, to prove that $\T_i\not\sim \T_j$, we look at the sequences of subscripts of the symbols $A$, $B$ and $C$ in \eqref{C_2BEV1}, \eqref{C_2BEV2} and \eqref{C_2BEV3}.
    This gives $1,2,\,\substack{3\\4}$, and $\substack{2\\3}\,,4,1$ and $4,1,3,2$.
    Then $\T_i\not\sim \T_j$ follows by the fact that in one of the sequence $i$ appears before $j$ and in another one $j$ appears before $i$.
    We have that $\T_1\not\sim x_2,z_2$ and $\T_3 \not\sim x_1,z_1$ follow from \eqref{C_2BEV1} and \eqref{C_2BEV2} just by looking at $A_1,B_1$ and $A_3,B_3$ respectively. For $\T_2\not\sim x_1,z_2$ we look at $A_2,C_2$ in \eqref{C_2BEV2} and \eqref{C_2BEV3}. For $\T_4\not\sim x_2,z_1$ we look at $B_4,C_4$ in \eqref{C_2BEV1} and \eqref{C_2BEV3}.
\end{proof}

\begin{figure}
    \centering
    \begin{tikzpicture}
        \begin{scope}[scale=2.5]
            \draw (0,0)--(1,1)--(0,2)--(-1,1)--(0,0); 
            \draw (0.1,0) node{$x$};
            \draw (0,0) node{$\bullet$};
            \draw (-1.1,1) node{$a$};
            \draw (-1,1) node{$\bullet$};
            \draw (0.1,0) node{$x$};
            \draw (0,0) node{$\bullet$};
            \draw (1.1,1) node{$b$};
            \draw (1,1) node{$\bullet$};
            \draw (0.1,0) node{$x$};
            \draw (0,0) node{$\bullet$};
            \draw (0.1,2) node{$z$};
            \draw (0,2) node{$\bullet$};

            \draw [decorate,decoration={brace,amplitude=7pt,mirror,raise=2ex}]
  (1.2,0.2) -- (1.2,1.8) node[midway,xshift=3em,yshift=0em]{$\T_2$};
  \draw [decorate,decoration={brace,amplitude=7pt,raise=2ex}]
  (-1.2,0.2) -- (-1.2,1.8) node[midway,xshift=-3em,yshift=0em]{$\T_1$};

            \begin{scope}[xshift=1cm,yshift=1cm,scale=0.1]
                \draw (-1,-3)--(1,3)--(-1,3)--(1,-3)--(-1,-3);
                \draw (1,3)--(1.5,2)--(0.67,2);
                \draw (-1,-3)--(-1.5,-2)--(-0.67,-2);
            \end{scope}

            \begin{scope}[xshift=-1cm,yshift=1cm,scale=0.1]
                \draw (-1,-3)--(1,3)--(-1,3)--(1,-3)--(-1,-3);
                \draw (1,3)--(1.5,2)--(0.67,2);
                \draw (-1,-3)--(-1.5,-2)--(-0.67,-2);
            \end{scope}
            
            \begin{scope}[xshift=0.4cm,yshift=0.4cm,scale=0.07,rotate=40]
                \draw (-1,-3)--(1,3)--(-1,3)--(1,-3)--(-1,-3);
                \draw (1,3)--(1.5,2)--(0.67,2);
                \draw (-1,-3)--(-1.5,-2)--(-0.67,-2);
            \end{scope}
            
            \begin{scope}[xshift=0.6cm,yshift=0.6cm,scale=0.07,rotate=40]
                \draw (-1,-3)--(1,3)--(-1,3)--(1,-3)--(-1,-3);
                \draw (1,3)--(1.5,2)--(0.67,2);
                \draw (-1,-3)--(-1.5,-2)--(-0.67,-2);
            \end{scope}
            
            \begin{scope}[xshift=0.4cm,yshift=1.6cm,scale=0.07,rotate=-40]
                \draw (-1,-3)--(1,3)--(-1,3)--(1,-3)--(-1,-3);
                \draw (1,3)--(1.5,2)--(0.67,2);
                \draw (-1,-3)--(-1.5,-2)--(-0.67,-2);
            \end{scope}
            
            \begin{scope}[xshift=0.6cm,yshift=1.4cm,scale=0.07,rotate=-40]
                \draw (-1,-3)--(1,3)--(-1,3)--(1,-3)--(-1,-3);
                \draw (1,3)--(1.5,2)--(0.67,2);
                \draw (-1,-3)--(-1.5,-2)--(-0.67,-2);
            \end{scope}
            
            \begin{scope}[xshift=-0.4cm,yshift=0.4cm,scale=0.07,rotate=-40]
                \draw (-1,-3)--(1,3)--(-1,3)--(1,-3)--(-1,-3);
                \draw (1,3)--(1.5,2)--(0.67,2);
                \draw (-1,-3)--(-1.5,-2)--(-0.67,-2);
            \end{scope}
            
            \begin{scope}[xshift=-0.6cm,yshift=0.6cm,scale=0.07,rotate=-40]
                \draw (-1,-3)--(1,3)--(-1,3)--(1,-3)--(-1,-3);
                \draw (1,3)--(1.5,2)--(0.67,2);
                \draw (-1,-3)--(-1.5,-2)--(-0.67,-2);
            \end{scope}
            
            \begin{scope}[xshift=-0.4cm,yshift=1.6cm,scale=0.07,rotate=40]
                \draw (-1,-3)--(1,3)--(-1,3)--(1,-3)--(-1,-3);
                \draw (1,3)--(1.5,2)--(0.67,2);
                \draw (-1,-3)--(-1.5,-2)--(-0.67,-2);
            \end{scope}
            
            \begin{scope}[xshift=-0.6cm,yshift=1.4cm,scale=0.07,rotate=40]
                \draw (-1,-3)--(1,3)--(-1,3)--(1,-3)--(-1,-3);
                \draw (1,3)--(1.5,2)--(0.67,2);
                \draw (-1,-3)--(-1.5,-2)--(-0.67,-2);
            \end{scope}
        \end{scope}
    \end{tikzpicture}
    \caption{A cycle poset $\CC_1$ with grafted trees from Proposition \ref{horizC1}.}
    \label{fig:coversunicycleC1}
\end{figure}
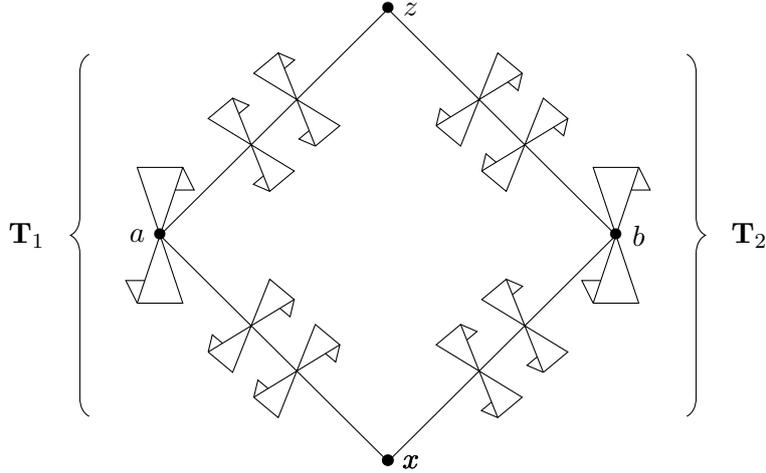

\begin{proposition}\label{horizC1}
The following three linear orders form a realizer of $\P_1$:
    \begin{align}
        A_1(x)\; B_{2}(z),\label{C_1BEV1}\\
        A_2(x)\; B_1(z),\label{C_1BEV2}\\
        x\; C_1\; C_2\;z.\label{C_1BEV3}
    \end{align}
\end{proposition}

\begin{proof}
    It is straightforward to check that these are three linear extensions of $\P_1$ (see Figure \ref{fig:coversunicycleC1}). Also we recover the crown $\C_1$ as a subposet since we have respectively in these linear extensions the subwords $x\,a\,b\,z$, $x\,b\,a\,z$ and $x\,a\,b\,z$.
    For $i=1,2$, looking at the subwords of these linear extensions on the elements of $\T_{i} \sqcup \{x,z\}$, we recover the realizer $A_{i}(x)\, z$, $x \, B_{i}(z)$ and $x \; C_{i} \, z$ of this subposet.
    The only incompatibility to check is $\T_1\not\sim \T_2$, which follows from $A_1 < B_2$ in \eqref{C_1BEV1} and $A_2<B_1$ in \eqref{C_1BEV2}.
\end{proof}

\section{Grafting on the crowns}
\label{sectionTreesonVertices}

\begin{figure}
    \centering
    \begin{tikzpicture}
        \begin{scope}[scale=1.5]
            \draw[thick] (0,0)--(1,1)--(2,0)--(3,1)--(4,0)--(5,1)--(6,0);   
            \draw (6,0) node[right]{$x_1$};
            \begin{scope}[scale=0.1, xscale=0.7, yscale=1.5]
                \draw (0,0) node[left]{$x_1$};
                \draw (-3,-5) -- (3,5) -- (-3,5) -- (3,-5) -- (-3,-5);
                \draw (1.8,3) -- (4.5,3) -- (3,5);
                \draw (-1.8,-3) -- (-4.5,-3) -- (-3,-5);    
            \end{scope}
            \begin{scope}[xshift=2cm]
                \begin{scope}[scale=0.1, xscale=0.7, yscale=1.5]
                \draw (0,0) node[left]{$x_2$};
                    \draw (-3,-5) -- (3,5) -- (-3,5) -- (3,-5) -- (-3,-5);
                   \draw (1.8,3) -- (4.5,3) -- (3,5);
                \draw (-1.8,-3) -- (-4.5,-3) -- (-3,-5);      
                \end{scope}
            \end{scope}
            \begin{scope}[xshift=4cm]
                \begin{scope}[scale=0.1, xscale=0.7, yscale=1.5]
                \draw (0,0) node[left]{$x_3$};
                    \draw (-3,-5) -- (3,5) -- (-3,5) -- (3,-5) -- (-3,-5);
                   \draw (1.8,3) -- (4.5,3) -- (3,5);
                \draw (-1.8,-3) -- (-4.5,-3) -- (-3,-5);     
                \end{scope}
            \end{scope}
            \begin{scope}[xshift=1cm, yshift=1cm]
                \begin{scope}[scale=0.1, xscale=0.7, yscale=1.5]
                \draw (0,0) node[left]{$z_1$};
                    \draw (-3,-5) -- (3,5) -- (-3,5) -- (3,-5) -- (-3,-5);
                    \draw (1.8,3) -- (4.5,3) -- (3,5);
                \draw (-1.8,-3) -- (-4.5,-3) -- (-3,-5);      
                \end{scope}
            \end{scope}
            \begin{scope}[xshift=3cm, yshift=1cm]
                \begin{scope}[scale=0.1, xscale=0.7, yscale=1.5]
                \draw (0,0) node[left]{$z_2$};
                    \draw (-3,-5) -- (3,5) -- (-3,5) -- (3,-5) -- (-3,-5);
                    \draw (1.8,3) -- (4.5,3) -- (3,5);
                \draw (-1.8,-3) -- (-4.5,-3) -- (-3,-5);     
                \end{scope}
            \end{scope}
            \begin{scope}[xshift=5cm, yshift=1cm]
                \begin{scope}[scale=0.1, xscale=0.7, yscale=1.5]
                \draw (0,0) node[left]{$z_3$};
                    \draw (-3,-5) -- (3,5) -- (-3,5) -- (3,-5) -- (-3,-5);
                    \draw (1.8,3) -- (4.5,3) -- (3,5);
                \draw (-1.8,-3) -- (-4.5,-3) -- (-3,-5);      
                \end{scope}
            \end{scope}
        \end{scope} 
    \end{tikzpicture}
    \caption{The crown $\C_3$ with grafted trees to each of its vertices. The last edge on the right is supposed to go back to the first vertex $x_1$.\label{fig:extremalcrown}}
\end{figure}
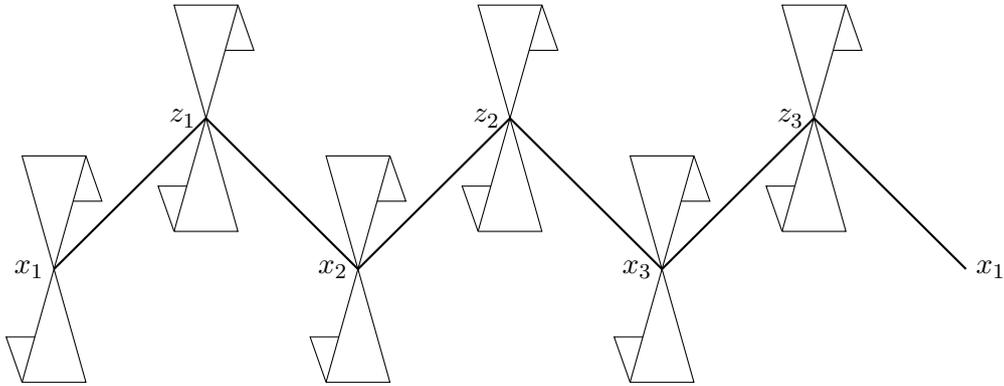

We will describe realizers for posets obtained by grafting trees on the crowns.
Recall that in Corollary \ref{cor:extlinarbre} we proved that the following is a realizer of $\T_w$:
$$(U_{w}^{-} \; D_{w,1}) \; (w \; U_{w}^{+}), \qquad
     (D_{w}^{-} \; w) \; (U_{w,1} \; D_{w}^{+}), \qquad
    D_{w,2} \; w \; U_{w,2}.$$
We denote $I_{w}^-:= U_{w}^{-} \, D_{w,1}$, $I_{w}^+:= U_{w,1} \, D_{w}^{+}$, $W^+:=w \, U_{w}^{+}$, $W^-:=D_{w}^{-} \, w$, and $W^{\bullet}~=~D_{w,2} \; w \; U_{w,2}$. Thus a realizer of $\T_w$ is given by:
\begin{gather}
    I_{w}^-\; W^+, \label{tree1} \\
    W^-\; I_{w}^+, \label{tree2} \\
    W^\bullet . \label{tree3}
\end{gather}

We will replace $w$ by $x_i$ or $z_i$, thus we just defined $I_{x_i}^-,I_{x_i}^+,\,X_i^-,\,X_i^+,\,X_i^{\bullet}$ and $I_{z_i}^-,I_{z_i}^+,\,Z_i^-,\,Z_i^+,\,Z_i^{\bullet}$.

For the remainder of this section, for any $n> 1$, let $\TT(\C_n)$ be a family of rooted tree posets indexed by the elements of $\C_n$, and let $\P_n$ be the unicycle poset obtained from the grafting of $\TT(\C_n)$ on $\C_n$ (see Figure \ref{fig:extremalcrown}).

\begin{proposition}\label{vertCn}
  Let $n\geq 4$. The following three linear orders form a realizer of $\P_n$:
    \begin{gather}
        I_{z_1}^-\; I_{x_2}^-\; X_1^-\; X_2^+\; Z_1^+\;
        (X_3^-\; Z_2^- \; I_{z_2}^+) \prod_{i=3}^{n-1}\Bigl(X_{i+1}^-\; Z_i^-\; I_{z_i}^+\; I_{x_i}^+ \Bigr)\;
        Z_n^\bullet \; I_{x_n}^+\; I_{x_1}^+,  \label{ext1}\\
        I_{z_2}^-\; X_2^\bullet\;
        \prod_{i=2}^{n-2}\Bigl(I_{z_{i+1}}^-\; I_{x_{i+1}}^-\; X_{i+1}^+\;Z_i^+ \Bigr)\; (I_{x_n}^-\; X_n^+\; Z_{n-1}^+)\;
        X_1^\bullet\; Z_n^-\;Z_1^\bullet\; I_{z_n}^+,   \label{ext2}\\
        I_{z_n}^-\; I_{x_1}^-\; X_1^+\;X_n^\bullet\;Z_n^+\; \prod_{i=n-1}^3\Bigl(X_i^\bullet\; Z_i^\bullet \Bigr)\; X_2^-\; Z_2^\bullet\; Z_1^-\; I_{z_1}^+\; I_{x_2}^+.   \label{ext3}
    \end{gather}
\end{proposition}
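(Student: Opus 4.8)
The strategy follows the template announced in the remark after \cref{lem:inducedlinext}: we exhibit the three explicit linear orders, check that they are genuinely linear extensions, then identify enough subposets via \cref{lem:inducedlinext} coming from earlier results, and finally verify the handful of remaining incomparabilities by the ``appears-left-in-one, appears-right-in-another'' argument. First I would verify that \eqref{ext1}, \eqref{ext2} and \eqref{ext3} are linear extensions of $\P$. By the grafting description (\cref{grafting}) and the fact that each $W^\pm$, $I_w^\pm$, $W^\bullet$ is already a linear extension of the corresponding piece of $\T_w$ (from \cref{cor:extlinarbre}), it suffices to check the cover relations coming from the crown, i.e. that each $X_{i+1}$-block and $X_i$-block lies below the $Z_i$-block in the appropriate sense, which is a finite inspection aided by \cref{fig:extremalcrown}.

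\textbf{Recovering subposets.} Next I would recover the relevant subposets. Forgetting the $\T_w$-decorations (taking subwords on the generic symbols $x_i,z_i$), the three words collapse to linear extensions of $\C_n$; one should check they reduce exactly to the realizer \eqref{exten1}--\eqref{exten3} of \cref{Lem:ExtLinCouronne}, possibly after a harmless relabelling. Then, for each vertex $w$ of $\C_n$, taking the subword on $\T_w$ one recovers the realizer $I_w^-\,W^+$, $W^-\,I_w^+$, $W^\bullet$ of \cref{cor:extlinarbre} (the three words each contain one of these three pieces). Combined, this already gives all comparabilities and incomparabilities internal to each $\T_w$ and along the crown. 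What remains are the ``cross'' incomparabilities: $\T_w\not\sim\T_{w'}$ for distinct non-adjacent vertices $w,w'$ of $\C_n$, and the mixed relations between a $\T_w$-block and the bare elements $x_i$ or $z_i$ of a non-adjacent vertex. As in \cref{horizCn}, these reduce to reading the sequence of subscripts (here: vertices) of the blocks in the three words and checking each non-comparable pair appears in both orders; for the mixed ones one uses that $\T_w$ sits between the images of $x_j$ and $z_j$ whenever $w$ is adjacent to neither, exploiting $x_j<z_j$.

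\textbf{Main obstacle.} The hard part will be the bookkeeping of which vertex-blocks are \emph{adjacent} in $\C_n$ (and hence whose mutual relations are dictated by the crown structure and must \emph{not} be broken) versus which are non-adjacent (and hence must come out incomparable). In particular the blocks $X_i^\bullet$, $Z_i^\bullet$, etc., appear in slightly different positions in the three words depending on parity and on whether $i$ is near the ``seam'' of the cycle (indices $1,2,n-1,n$), so the three special factors at the ends of \eqref{ext1}--\eqref{ext3} (the ones outside the big $\prod$) need separate inspection, exactly as the terms involving $C_{2i-1}$, $C_{2i}$ required a separate argument in \cref{horizCn}. I expect the proof to run: (1) linear-extension check; (2) recover $\C_n$ and each $\T_w$; (3) list the remaining incomparabilities $\T_w\not\sim\T_{w'}$ and $\T_w\not\sim x_j, z_k$; (4) dispatch (3) by reading subscript sequences from the three words, treating the seam blocks by hand. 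No single step is deep, but step (4) for the boundary indices is where an error could hide, so I would be careful to tabulate the position of every block in all three words before concluding.
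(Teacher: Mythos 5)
Your overall strategy coincides with the paper's (check the three words are linear extensions, recover the crown realizer of \cref{Lem:ExtLinCouronne} and the tree realizers of \cref{cor:extlinarbre} as subwords, then finish off the remaining incomparabilities by left/right position arguments), but your enumeration of the remaining incomparabilities has a genuine gap. You claim that after recovering each $\T_w$ and the crown, what remains are only cross-relations between trees at \emph{non-adjacent} vertices of $\C_n$. This misses the incomparabilities between \emph{adjacent} vertices: for each $i$, only the down-set part $\X_i^-=\D_{x_i}^-\sqcup\{x_i\}$ of $\T_{x_i}$ is actually below $z_{i-1}$ and $z_i$ in the grafted poset, so one must separately verify $\I^+_{x_i}\not\sim \T_{z_{i-1}},\T_{z_i}$ and, dually, $\T_{x_i}\not\sim \I^-_{z_{i-1}},\I^-_{z_i}$. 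These are not implied by the realizer of any single $\T_w$, nor by the realizer of the crown (even of the crown with $\X_i^-$ and $\Z_i^+$ grafted on), and they are precisely the constraints that force the peculiar placement of the blocks $I^+_{x_i}$ at the far right of \eqref{ext1} and $I^-_{z_i}$ at the far left of \eqref{ext2}; the paper lists them explicitly as two of its five families of incomparabilities and devotes a dedicated argument to them in each of the cases $i=1,2,n$ and $3\le i\le n-1$.

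A second, smaller point: your step (4) proposes to ``read the sequence of subscripts of the blocks'' as in the proof for non-extremal grafting. That worked there because each $\T_p$ occupied a single contiguous block ($A_p$, $B_p$ or $C_p$) in every linear extension. Here each $\T_{x_i}$ is split into two non-contiguous pieces inside a single word (e.g.\ $X_1^-$ near the start and $I^+_{x_1}$ at the very end of \eqref{ext1}), so a bare subscript-sequence reading does not apply; one has to argue with the two pieces separately, which is exactly why the paper proceeds by a progressive case analysis (treating $i=1$, then $i=2$, then $i=n$, then the generic $i$, each time restricting to the complement of the trees already handled). Your plan is salvageable, but as written it would not close these adjacent-vertex cases.
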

\begin{proof}
    It is straightforward to check that these three linear orders are linear extensions of $\P_n$ (see Figure \ref{fig:extremalcrown}).
    Moreover, the restrictions of the three linear extensions to the set $\T_w$ form a realizer of $\T_w$ for any $i\in [n]$ and $w\in \{x_i,z_i\}$ (see above Proposition \ref{vertCn}). The same goes for the crown $\C_n$, thus for $\C_n$ with $\X_i^-$ grafted to each $x_i$ and $\Z_i^+$ grafted to each $z_i$.
    We need to check the following incomparabilities for all $i\in[n]$:
    \begin{align}
    &\T_{x_i}\not\sim\T_{x_j}, \,\forall j\neq i ,  \label{(a)} \\
    &\T_{x_i}\not\sim\T_{z_j}, \,\forall j\neq i,i-1 , \label{(b)} \\
    &\I^+_{x_i}\not\sim\T_{z_j},\, \text{ for } j=i,i-1 , \label{(c)} \\
    &\T_{x_i}\not\sim\I^-_{z_j},\,\text{ for } j=i,i-1, \label{(d)} \\
    &\T_{z_i}\not\sim\T_{z_j},\,\forall j\neq i . \label{(e)}
    \end{align}

Note that for example in \eqref{ext1} the elements of $\T_{x_1}$ are in $X_1^-$ and $I^+_{x_1}$.
Keep in mind \eqref{tree1}, \eqref{tree2} and \eqref{tree3} to understand the following.
We first verify the incomparabilities \eqref{(a)} to \eqref{(d)} for each $i$ and finish with the incomparabilities of \eqref{(e)}.
We begin with $i=1$.
In \eqref{ext3}, $\T_{x_1}$ is smaller than everything except $\I_{z_n}^-$ and in \eqref{ext2}, it is bigger than everything except $\T_{z_1}$ and $\T_{z_n}$. 
This verifies that we have the incompatibilities of \eqref{(a)} and \eqref{(b)} and $\T_{x_i}\not\sim\I^-_{z_n}$.
In \eqref{ext1}, $\I^-_{z_1}<\T_{x_1}$, which completes the verification of \eqref{(d)}, and $\T_{z_1}, \T_{z_n}<\I^+_{x_1}$, which completes the verification of \eqref{(c)}.
    
For $i=2$, we already have all incomparabilities involving  $\T_{x_1}$, thus we restrict the linear extensions to the subset $\P\setminus\T_{x_1}$.
In \eqref{ext1}, $\T_{x_2}$ is smaller than every element except $\I_{z_1}^-$.
In \eqref{ext2}, $\T_{x_2}$ is smaller than every element except $\I_{z_2}^-$. In particular we have $\T_{x_2} \not\sim \I^-_{z_1},\I^-_{z_2}$, which gives \eqref{(d)}.
In \eqref{ext3}, $\T_{x_2}$ is bigger than everything except $\T_{z_1}$ and $\T_{z_2}$, thus verifying the incomparabilities \eqref{(a)} and \eqref{(b)}.
We also have $\T_{z_1}, \T_{z_2}<\I_{x_2}^+$, which proves \eqref{(c)}.

    For $i=n$, similarly, we can restrict to the subset $\P\setminus (\T_{x_1}\cup\T_{x_2})$.
    In \eqref{ext2}, $\T_{x_n}$ is bigger than everything except $\Z_{n-1}^+$ and $\T_{z_n}$.
    In \eqref{ext3}, $\T_{x_n}$ is smaller than everything except $\I_{z_n}^-$.
    Thus we have the incompatibilities of \eqref{(a)}, \eqref{(b)} and \eqref{(d)}.
    For \eqref{(c)}, we need to remark that $\I_{x_n}^+$ is bigger than $\T_{z_{n-1}},\T_{z_n}$ in \eqref{ext2}, and we have $\T_{x_n}<\T_{z_{n-1}},\T_{z_n}$ in \eqref{ext1}.

    For $3\leq i\leq n-1$, we can restrict to the subset $\P\setminus (\T_{x_1}\cup\T_{x_2}\cup\T_{x_n})$. We have in \eqref{ext2} and \eqref{ext3} respectively 
    \begin{align}
    &\T_{z_j},\T_{x_k}<\T_{x_i}<\T_{z_l},\T_{x_m},\quad \text{for }j<i-1,\, k<i,\, i<l,\,i< m, \\
    &\T_{z_j},\T_{x_k}<\T_{x_i}<\T_{z_l},\T_{x_m},\quad \text{for }i<j,\, i<k,\, l\leq i,\, m<i, \label{la2}
    \end{align}
    verifying the incompatibilities \eqref{(a)} and \eqref{(b)}.
    We finally need to remark that, in \eqref{ext1}, we have $\T_{z_i},\T_{z_{i-1}}<\I^+_{x_i}$ and, in \eqref{ext2}, we have $\I^-_{z_i},\I^-_{z_{i-1}}<\T_{x_i}$.
    Both paired with \eqref{la2} verify, respectively, \eqref{(c)} and \eqref{(d)}.
    
    The only missing incomparabilities are the ones of the type \eqref{(e)}, which are satisfied because for all $i$, in \eqref{ext1} we have $\T_{z_i}<\T_{z_{i+1}}$ and in \eqref{ext3} we have $\T_{z_{i+1}}<\T_{z_i}$.
\end{proof}

For the crown $\C_3$, it is the same formula stated in Proposition \ref{vertCn} but without the middle products of \eqref{ext1}, \eqref{ext2} and \eqref{ext3}. The proof is exactly the same but we state this particular case separately as we will need to treat $\C_3$ separately in the last section.

\begin{proposition}\label{vertC3}
The following three linear orders form a realizer of $\P_3$:
    \begin{gather}
        I_{z_1}^-\; I_{x_2}^-\; X_1^-\; X_2^+\; Z_1^+\; (X_3^-\; Z_2^- \; I_{z_2}^+) \; Z_3^\bullet \; I_{x_3}^+\; I_{x_1}^+,  \label{C3ext1}\\
        I_{z_2}^-\; X_2^\bullet\; (I_{x_3}^-\; X_3^+\; Z_{2}^+)\; X_1^\bullet\; Z_3^-\;Z_1^\bullet\; I_{z_3}^+,   \label{C3ext2}\\
        I_{z_3}^-\; I_{x_1}^-\; X_1^+ \; X_3^\bullet \; Z_3^+ \; X_2^-\; Z_2^\bullet\; Z_1^-\; I_{z_1}^+\; I_{x_2}^+.   \label{C3ext3}
    \end{gather}
\end{proposition}

\begin{proposition}\label{vertC2}
The following three linear orders form a realizer of $\P_2$:
    \begin{gather}
        X_1^-\; X_2^-\; Z_1^\bullet\; Z_2^\bullet\; I_{x_1}^+\; I_{x_2}^+, \label{C2ext1}\\
        I_{z_2}^-\; I_{z_1}^-\; X_2^\bullet\; X_1^\bullet\; Z_2^+\; Z_1^+, \label{C2ext2}\\
        I_{x_1}^-\; X_1^+\;I_{x_2}^- \; X_2^+\; Z_2^-\; I_{z_2}^+\; Z_1^-\; I_{z_1}^+. \label{C2ext3}
    \end{gather}
\end{proposition}
\begin{proof}
    It is straightforward to check that each linear order is a linear extension of $\P_2$.
    Moreover, the restrictions of the three linear extensions to the set $\T_w$ form a realizer of $\T_w$ for any $i\in [n]$ and $w\in \{x_i,z_i\}$ (see above Proposition \ref{vertCn}).
    The same goes for the crown $\C_2$, thus for $\C_2$ with $\X_i^-$ grafted to each $x_i$ and $\Z_i^+$ grafted to each $z_i$.
    We only need to check the following incomparabilities
    \begin{align*}
    &\T_{x_1}\not\sim \T_{x_2},\qquad \T_{z_1}\not\sim \T_{z_2},\qquad \I^+_{x_i}\not\sim \P\setminus \T_{x_i}\text{ for } i=1,2,\qquad \I^-_{z_i}\not\sim \P\setminus \T_{z_i}\text{ for } i=1,2.\\
    \end{align*}
    We have $\T_{x_1}<\T_{x_2}$ in \eqref{C2ext3} and $\T_{x_2}<\T_{x_1}$ in \eqref{C2ext2}. Similarly, $\T_{z_1}<\T_{z_2}$ in \eqref{C2ext1} and $\T_{z_2}<\T_{z_1}$ in \eqref{C2ext3}.
    Finally, we finish the proof of the proposition with:
    \begin{align*}
        &\T_{z_1}, \T_{z_2}<\I_{x_1}^+,\I_{x_2}^+ \quad \text{in \eqref{C2ext1}},\\
        &\I_{z_1}^-,\I_{z_2}^-<\T_{x_1}, \T_{x_2} \quad  \text{in \eqref{C2ext2}},\\
        &\T_{x_1}, \T_{x_2}<\T_{z_1},\T_{z_2} \quad  \text{in \eqref{C2ext3}}.
    \end{align*}
\end{proof}

\begin{proposition}\label{vertC1}
    Let $\TT=\{(\T_x,r_x),(\T_z,r_z),(\{r_a\},r_a),(\{r_b\},r_b)\}$ be a family of rooted tree posets indexed by the elements of $\C_1$, and $\P$ be the unicycle poset obtained from the vertex-grafting of $\TT$ on $\C_1$.
    Then the following three linear orders form a realizer of $\P$:
    \begin{align}
        X^-\; a\; b\; Z^\bullet\; I_x^+, \label{C1ext1}\\
        I_z^-\; X^\bullet\;b\; a\; Z^+, \label{C1ext2}\\
        I_x^-\; X^+\; a\; b\; Z^-\; I_z^+. \label{C1ext3}
    \end{align}
\end{proposition}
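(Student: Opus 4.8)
The plan is to follow the template shared by \cref{vertCn,vertC3,vertC2}: check that the three displayed orders are linear extensions of $\P$, recover the relevant subposets from them via \cref{lem:inducedlinext}, and then verify by inspection the handful of incomparabilities not internal to a recovered subposet.

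For the first two points, I would restrict each of \eqref{C1ext1}, \eqref{C1ext2}, \eqref{C1ext3} to the three sets $\T_x$, $\T_z$ and $\{x,a,b,z\}$ (reading $r_x$ as $x$ and $r_z$ as $z$). On $\T_x$ the three restrictions are $X^-\,I_x^+$, $X^\bullet$ and $I_x^-\,X^+$, which are precisely \eqref{tree2}, \eqref{tree3} and \eqref{tree1} for $w=x$, so by \cref{cor:extlinarbre} we recover $\T_x$; symmetrically we recover $\T_z$; and on $\{x,a,b,z\}$ the restrictions are $x\,a\,b\,z$, $x\,b\,a\,z$, $x\,a\,b\,z$, so we recover the crown $\C_1$. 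Because in the grafting $X^-=D_x^-\,x$ is exactly the part of $\T_x$ lying below $\{a,b\}$ and $Z^+=z\,U_z^+$ is exactly the part of $\T_z$ lying above $\{a,b\}$, every relation of $\P$ is obtained, by transitivity within a single total order, from a relation internal to $\T_x$, to $\T_z$, or to the crown; hence the three orders are indeed linear extensions of $\P$.

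It then remains to witness the incomparabilities of $\P$ that are not internal to $\T_x$, to $\T_z$, or to the crown. Reading off the grafting, the only cross-block comparabilities are $X^-<a$, $X^-<b$, $X^-<Z^+$, $a<Z^+$, $b<Z^+$, so the missing incomparabilities are exactly $a\not\sim b$, $\I_x^+\not\sim\{a,b\}$, $\I_x^+\not\sim\T_z$, $\I_z^-\not\sim\{a,b\}$ and $\T_x\not\sim\I_z^-$. For the two involving $\I_x^+$, I would compare \eqref{C1ext1}, in which $I_x^+$ lies to the right of $a$, $b$ and of $Z^\bullet$, with \eqref{C1ext3}, in which $I_x^-$ and $X^+$ — together spanning $\T_x\supseteq\I_x^+$ — lie to the left of $a$, $b$, $Z^-$ and $I_z^+$. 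Symmetrically, for the two involving $\I_z^-$, I would compare \eqref{C1ext2}, in which $I_z^-$ lies to the left of $X^\bullet$, $b$ and $a$, with \eqref{C1ext3}, in which $Z^-$ and $I_z^+$ — together spanning $\T_z\supseteq\I_z^-$ — lie to the right of all of $\T_x$ and of $a$, $b$. Finally $a\not\sim b$ is witnessed by $a\,b$ in \eqref{C1ext1} against $b\,a$ in \eqref{C1ext2} (equivalently, by the recovered $\C_1$).

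I expect no genuine obstacle here: as in the companion propositions of this section, the whole argument is bookkeeping. The single point that must be pinned down carefully is the combinatorics of grafting onto $\C_1$ — namely that $X^-$ and $Z^+$ are exactly the parts of $\T_x$ and $\T_z$ comparable to $a$ and $b$, so that the incomparable parts are precisely $\I_x^+$ and $\I_z^-$ — after which both the linear-extension check and the incomparability check are immediate from reading the three words.
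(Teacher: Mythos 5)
Your proposal is correct and follows essentially the same route as the paper: recover $\T_x$, $\T_z$ and the crown $\C_1$ as subwords, then check the remaining incomparabilities $\I_x^+\not\sim\P\setminus\T_x$ and $\I_z^-\not\sim\P\setminus\T_z$ by position in the three words. The only cosmetic difference is that the paper additionally invokes \cref{cor:extlinarbrexy} to recover the subposets $\X^-\sqcup\Z^+\sqcup\{a,x,z\}$ and $\X^-\sqcup\Z^+\sqcup\{b,x,z\}$, whereas you verify those cross-block comparabilities directly from the grafting definition; both are fine.
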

\begin{proof}
    It is straightforward to check that these linear orders are linear extensions of $\P$ with $\C_1$ as a subposet.
    The restrictions of the three linear extensions to the set $\T_w$ form a realizer of $\T_w$ for any $w=x,z$ (see above Proposition \ref{vertCn}).
    Moreover, by Corollary \ref{cor:extlinarbrexy}, we know that the restrictions of these linear extensions form a realizer for both $\X^-\sqcup\Z^+\sqcup \{a,x,z\}$ and $\X^-\sqcup\Z^+\sqcup \{b,x,z\}$.
    We only need to check that $\I^+_{x}\not\sim \P\setminus \T_{x}$ and $\I^-_{z}\not\sim \P\setminus \T_{z}$, which is easy to see.
\end{proof}

\section{Grafting on cycle posets}
\label{sectionFinale}

Joining the work done in the previous sections, we now build, for any unicycle poset $\P$, a realizer of $\P$ of size 3.
As proved in Section \ref{sectionUnicycle}, a unicycle poset $\P$ can be obtained by grafting rooted trees $\TT(\CC_n)$ on a cycle poset $\CC_n$.
Moreover, $\CC_n$ can be itself built by adding elements between two elements that form a cover of its underlying crown $\C_n$ (Lemma \ref{lemma:cycleiscrown}).
We merge the realizers described in Section \ref{sectionTreesonCovers} with the ones described in Section \ref{sectionTreesonVertices}. Note that in the next result, the first two linear orders are split in two lines. 

\begin{figure}
    \centering
     \begin{tikzpicture}
        \begin{scope}[scale=3.5]
            \draw[thick] (0,1)--(1,0)--(2,1)--(3,0);
            \draw[dashed] (-0.3,0.7)--(0,1);
            \draw[dashed] (3,0)--(3.3,0.3);
            \draw (-0.05,1) node[left]{$z_{i-1}$};
            \draw (1.05,0) node[right]{$x_i$};
            \draw (1.95,1) node[left]{$z_{i}$};
            \draw (3.05,0) node[right]{$x_{i+1}$};
            \draw (0,1) node{$\bullet$};
            \draw (1,0) node{$\bullet$};
            \draw (2,1) node{$\bullet$};
            \draw (3,0) node{$\bullet$};
            
            \begin{scope}[yshift=1cm, scale=0.07]
                \draw (-1,-3)--(1,3)--(-1,3)--(1,-3)--(-1,-3);
                \draw (1,3)--(1.5,2)--(0.67,2);
                \draw (-1,-3)--(-1.5,-2)--(-0.67,-2);
            \end{scope}
            
            \begin{scope}[xshift=2cm, yshift=1cm, scale=0.07]
                \draw (-1,-3)--(1,3)--(-1,3)--(1,-3)--(-1,-3);
                \draw (1,3)--(1.5,2)--(0.67,2);
                \draw (-1,-3)--(-1.5,-2)--(-0.67,-2);
            \end{scope}
            
            \begin{scope}[xshift=1cm, scale=0.07]
                \draw (-1,-3)--(1,3)--(-1,3)--(1,-3)--(-1,-3);
                \draw (1,3)--(1.5,2)--(0.67,2);
                \draw (-1,-3)--(-1.5,-2)--(-0.67,-2);
            \end{scope}
            
            \begin{scope}[xshift=3cm, scale=0.07]
                \draw (-1,-3)--(1,3)--(-1,3)--(1,-3)--(-1,-3);
                \draw (1,3)--(1.5,2)--(0.67,2);
                \draw (-1,-3)--(-1.5,-2)--(-0.67,-2);
            \end{scope}
            
            \begin{scope}[xshift=0.3cm, yshift=0.7cm, scale=0.07, rotate=-40]
                \draw (-1,-3)--(1,3)--(-1,3)--(1,-3)--(-1,-3);
                \draw (1,3)--(1.5,2)--(0.67,2);
                \draw (-1,-3)--(-1.5,-2)--(-0.67,-2);
            \end{scope}
            
            \begin{scope}[xshift=0.5cm, yshift=0.5cm, scale=0.07, rotate=-40]
                \draw (-1,-3)--(1,3)--(-1,3)--(1,-3)--(-1,-3);
                \draw (1,3)--(1.5,2)--(0.67,2);
                \draw (-1,-3)--(-1.5,-2)--(-0.67,-2);
            \end{scope}
            
            \begin{scope}[xshift=0.7cm, yshift=0.3cm, scale=0.07, rotate=-40]
                \draw (-1,-3)--(1,3)--(-1,3)--(1,-3)--(-1,-3);
                \draw (1,3)--(1.5,2)--(0.67,2);
                \draw (-1,-3)--(-1.5,-2)--(-0.67,-2);
            \end{scope}
            
            \begin{scope}[xshift=1.3cm, yshift=0.3cm, scale=0.07, rotate=40]
                \draw (-1,-3)--(1,3)--(-1,3)--(1,-3)--(-1,-3);
                \draw (1,3)--(1.5,2)--(0.67,2);
                \draw (-1,-3)--(-1.5,-2)--(-0.67,-2);
            \end{scope}
            
            \begin{scope}[xshift=1.5cm, yshift=0.5cm, scale=0.07, rotate=40]
                \draw (-1,-3)--(1,3)--(-1,3)--(1,-3)--(-1,-3);
                \draw (1,3)--(1.5,2)--(0.67,2);
                \draw (-1,-3)--(-1.5,-2)--(-0.67,-2);
            \end{scope}
            
            \begin{scope}[xshift=1.7cm, yshift=0.7cm, scale=0.07, rotate=40]
                \draw (-1,-3)--(1,3)--(-1,3)--(1,-3)--(-1,-3);
                \draw (1,3)--(1.5,2)--(0.67,2);
                \draw (-1,-3)--(-1.5,-2)--(-0.67,-2);
            \end{scope}
            
            \begin{scope}[xshift=2.3cm, yshift=0.7cm, scale=0.07, rotate=-40]
                \draw (-1,-3)--(1,3)--(-1,3)--(1,-3)--(-1,-3);
                \draw (1,3)--(1.5,2)--(0.67,2);
                \draw (-1,-3)--(-1.5,-2)--(-0.67,-2);
            \end{scope}
            
            \begin{scope}[xshift=2.5cm, yshift=0.5cm, scale=0.07, rotate=-40]
                \draw (-1,-3)--(1,3)--(-1,3)--(1,-3)--(-1,-3);
                \draw (1,3)--(1.5,2)--(0.67,2);
                \draw (-1,-3)--(-1.5,-2)--(-0.67,-2);
            \end{scope}
            
            \begin{scope}[xshift=2.7cm, yshift=0.3cm, scale=0.07, rotate=-40]
                \draw (-1,-3)--(1,3)--(-1,3)--(1,-3)--(-1,-3);
                \draw (1,3)--(1.5,2)--(0.67,2);
                \draw (-1,-3)--(-1.5,-2)--(-0.67,-2);
            \end{scope}
            
        \end{scope}    
    \end{tikzpicture}
    \caption{A part of a general unicycle poset whose underlying crown is $\C_n$ for $n\geq 2$.}
    \label{fig:generalunicycle}
\end{figure}
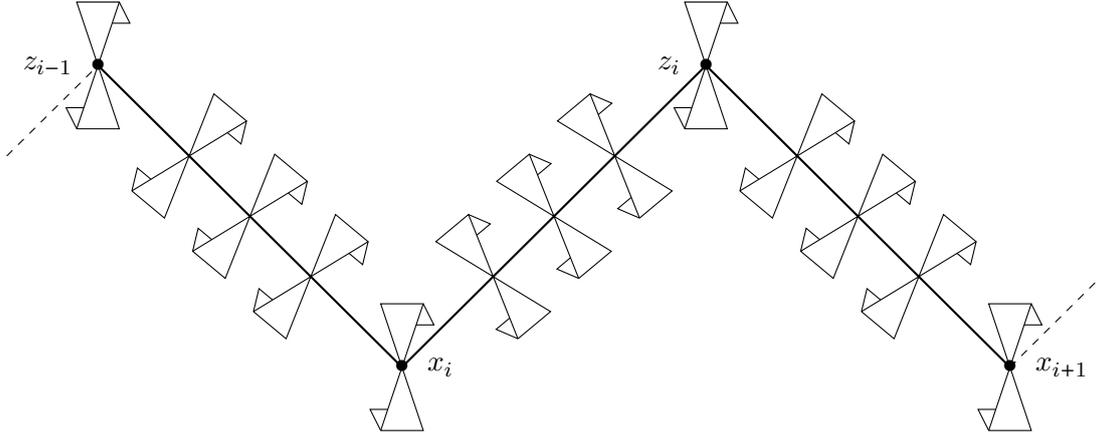

\begin{proposition}\label{conjFinal}
    Let $n\geq 4$. Let $\CC_n$ be a cycle poset, $\TT(\CC_n)$ be a family of rooted trees indexed by $\CC_n$, and $\P$ be the unicycle poset obtained from the grafting of $\TT(\CC_n)$ on $\CC_n$.
    The following three linear orders form a realizer of $\P$:
    {\small
    \begin{align}
        &I_{z_1}^-\; I_{x_2}^-\; A_1(X_1^-) \; X_2^+\; B_2(Z_1^+)\; C_{2n}\;
        \big(C_{3}\; A_4(X_3^-)\; Z_2^- \; I_{z_2}^+\big) \prod_{i=3}^{n-1}\Bigl(C_{2i-1}\; A_{2i}(X_{i+1}^-)\;Z_i^-\; I_{z_i}^+\; I_{x_i}^+ \Bigr)\label{compCnext1}\\
        \notag & \quad \quad \quad \quad \quad \quad \quad \quad \quad \quad \quad \quad \quad \quad \quad \quad \quad \quad \quad \quad\quad \quad \quad \quad \quad \quad \quad \quad \quad \quad \quad \quad \quad \quad \quad
        B_{2n-1}(Z_n^\bullet) \; I_{x_n}^+\; I_{x_1}^+,\\
        &I_{z_2}^-\; A_3(X_2^\bullet)\; \big(I_{z_3}^-\; I_{x_3}^-\; X_{3}^+\; B_4(Z_2^+)\big)\; C_2\;
        \prod_{i=3}^{n-2}\Bigl(I_{z_{i+1}}^-\; I_{x_{i+1}}^-\; X_{i+1}^+\; B_{2i}^{2i-1}(Z_i^+) \Bigr)\label{compCnext2}\\
        \notag & \quad \quad \quad \quad \quad \quad \quad \quad \quad \quad \quad \quad \quad \quad \quad \quad \quad \quad \quad \quad
        \big(I_{x_n}^-\; X_n^+\; B_{2n-2}^{2n-3}(Z_{n-1}^+)\big)\;
        C_{2n-1}\; A_{2n}(X_1^\bullet)\; Z_n^-\; B_1(Z_1^\bullet)\; I_{z_n}^+,\\
        &I_{z_n}^-\; I_{x_1}^-\; X_1^+\; A_{2n-1}(X_n^\bullet) \; B_{2n}(Z_n^+)\; C_1 \; \prod_{i=n-1}^3\Bigl(C_{2i}\; A_{2i-1}(X_i^\bullet)\; Z_i^\bullet \Bigr)\; C_4\; A_2(X_2^-)\; B_3(Z_2^\bullet) \; Z_1^-\; I_{z_1}^+\; I_{x_2}^+.\label{compCnext4}
    \end{align}
    }
\end{proposition}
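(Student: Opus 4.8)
The plan is to follow the scheme announced in the remark after \cref{lem:inducedlinext}: check that the three words are linear extensions of $\P$, recover the subposets we need as subwords of realizers already built, and kill the remaining incomparabilities by the left/right argument. The structural observation that organizes everything is that \eqref{compCnext1}--\eqref{compCnext4} are obtained from the three linear extensions of \cref{horizCn} by replacing, in the pattern dictated by \cref{vertCn}, each occurrence of a single letter $x_i$ (resp. $z_i$) by the tree-blocks $X_i^\pm, X_i^\bullet$ (resp. $Z_i^\pm, Z_i^\bullet$) together with the detached pieces $I_{x_i}^\pm$ (resp. $I_{z_i}^\pm$). That the three words are linear extensions of $\P$ is then routine and is most easily read off \cref{fig:generalunicycle}: inside each $\T_w$ the order agrees with the realizer of \cref{cor:extlinarbre}, each $x_i$ precedes $z_{i-1}$ and $z_i$, and every chain $x \lessdot y_1^p \lessdot \cdots \lessdot z$ is respected; all of this is inherited from the corresponding properties of \cref{horizCn} and \cref{vertCn}.

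I would then recover three families of subposets. First, deleting from the three words every element of $\T_{x_i} \setminus \{x_i\}$ and $\T_{z_i} \setminus \{z_i\}$ for every $i$: each of $X_i^-, X_i^+, X_i^\bullet$ collapses to $x_i$, each of $Z_i^-, Z_i^+, Z_i^\bullet$ to $z_i$, and every $I_{x_i}^\pm, I_{z_i}^\pm$ becomes empty, so what remains is exactly the realizer of \cref{horizCn}; this recovers the grafting of $\widetilde\TT(\CC_n)$ on $\CC_n$ and thereby settles all relations among the $x_i$, the $z_i$ and the trees $\T_p$. Second, deleting every $\T_p$: each $A_p(\cdot)$ and $B_p(\cdot)$ collapses to its argument and each $C_p$ becomes empty, so what remains is exactly the realizer of \cref{vertCn}; this recovers the vertex-grafting of $\TT(\C_n)$ on $\C_n$ and settles all relations among the $\T_{x_i}$ and $\T_{z_i}$. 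Third, for each chain $p$ with bottom endpoint $x$ and top endpoint $z$, restricting to $(\D_x^- \sqcup \{x\}) \sqcup \T_p \sqcup (\U_z^+ \sqcup \{z\})$: this is a poset of the form $\mathbf{Q}$ of \cref{cor:extlinarbrexy}, with end-trees $\D_x^- \sqcup \{x\}$ and $\U_z^+ \sqcup \{z\}$, and unwinding the definitions of $A_p, B_p, C_p$ (keeping \cref{remarkpartsAB} in mind) one checks that the three restricted words are, for a suitable choice of realizers of the two end-trees, exactly the three linear extensions of \cref{cor:extlinarbrexy}; this recovers that subposet and settles $\D_x^- < \T_p$, $\D_x^- < \U_z^+ \sqcup \{z\}$ and $\{x\} \sqcup \T_p < \U_z^+$. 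The degenerate case $k_p = 0$ (a cover of $\CC_n$, so $\T_p = \emptyset$, $A_p(X) = X$, $B_p(Z) = Z$ and $C_p$ empty) needs no special care.

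After these three recoveries, the only relations of $\P$ not yet accounted for are those between the interior of a tree grafted on a crown vertex and a tree $\T_p$ grafted on a chain; one checks from the grafting rules that these are all incomparabilities, and that it is enough to verify $\I_{x_i}^+ \not\sim \T_p$ and $\I_{z_i}^- \not\sim \T_p$ for every $i$ and every chain $p$, together with $\D_{x_i}^- \not\sim \T_p$ (resp. $\U_{z_i}^+ \not\sim \T_p$) whenever $p$ is not incident to $x_i$ (resp. $z_i$). Each of these follows from the left/right argument: in one of \eqref{compCnext1}--\eqref{compCnext4} the block to the left of $\not\sim$ lies entirely to the left of the block to its right, while in another it lies entirely to the right, the positions being visible on \cref{fig:generalunicycle}.

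The step I expect to be the main obstacle is the bookkeeping in this last part: there are of the order of $n$ crown vertices and $n$ chains, so many pairs must be inspected, and for each one must locate an entire tree-block, rather than a single letter, inside three long words; carrying this out uniformly is the delicate point, and it is precisely what the placement of the symbols $C_p$, $I_{x_i}^\pm$ and $I_{z_i}^\pm$ in \eqref{compCnext1}--\eqref{compCnext4} is arranged to allow. A more clerical but still non-trivial point is checking, in the three recoveries, that the restrictions literally reproduce the earlier realizers, which forces one to carefully unwind the nested definitions of $A_p(\cdot), B_p(\cdot), C_p$ and of $X_i^\pm, X_i^\bullet, Z_i^\pm, Z_i^\bullet, I_{x_i}^\pm, I_{z_i}^\pm$.
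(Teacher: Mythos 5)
Your reduction is correct and your overall strategy is the paper's own: the three words are the realizer of \cref{horizCn} with each crown-vertex letter expanded into tree-blocks following the pattern of \cref{vertCn}, and the first two recoveries (collapsing the crown-vertex trees to recover \cref{horizCn}, deleting the chain trees to recover \cref{vertCn}) are exactly how the paper begins. Your third recovery --- restricting to $(\D_x^-\sqcup\{x\})\sqcup\T_p\sqcup(\U_z^+\sqcup\{z\})$ and matching the result with \cref{cor:extlinarbrexy} --- is a legitimate small variant: the paper does not perform this restriction, but instead derives the corresponding incomparabilities $\I_p^-\not\sim\T_{x_{\lfloor p/2\rfloor+1}}$ and $\I_p^+\not\sim\T_{z_{\lceil p/2\rceil}}$ directly from \cref{remarkpartsAB} together with the observation that the argument of $A_p(\cdot)$ is never an $X^+$ and that of $B_p(\cdot)$ is never a $Z^-$. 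Your inventory of what then remains to be checked is complete and equivalent to the paper's list \eqref{fin1}--\eqref{fin4}.

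The gap is that you stop precisely where the proof begins: the assertion that ``each of these follows from the left/right argument'' is the entire content of the paper's proof, and it is not a formality. Three things make it nontrivial. First, the blocks of a crown-vertex tree are scattered inside each word ($I_{x_i}^-$, $X_i^{\pm}$ or $X_i^\bullet$, and $I_{x_i}^+$ sit in distant positions), so ``$\I_{x_i}^+$ lies entirely to the left of $\T_p$'' must be certified piece by piece in each of the three words. Second, for many pairs no fixed pair of the three linear extensions separates them, and one must select the right two for each case and sometimes fall back on the third: e.g.\ for $\T_{2i}\not\sim\T_{z_k}$, $k\neq i$, the word \eqref{compCnext2} gives no usable information and \eqref{compCnext4} separates only in one direction, so \eqref{compCnext1} must be invoked; likewise the pairs $\T_1\not\sim\T_{x_n},\T_{z_n}$, $\:\T_2\not\sim\T_{x_3},\T_{z_3}$, $\:\T_{2n}\not\sim\T_{x_2}$ and $\T_{2n-1}\not\sim\T_{z_1}$ each survive the ``obvious'' two words and need a separate check. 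Third, the boundary chains $p\in\{1,2,2n-1,2n\}$ and boundary vertices $i\in\{1,2,n\}$ break the generic pattern of the products and must be treated individually, which is why the paper's argument splits into six cases for \eqref{fin1} alone. Since the proposition is true, your plan would succeed if executed, but as written it is a correct reduction followed by an unverified claim, not a proof.
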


\begin{proof}
    It is straightforward to check that these linear orders are linear extensions of $\P$ (see Figure \ref{fig:generalunicycle}). If we restrict these linear extensions respectively to the subposets considered in Propositions \ref{horizCn} and \ref{vertCn}, we recover the realizers of these subposets. Let $p\in [2n]$.
    It suffices to prove the following incomparabilities 
    \begin{align}
        \T_{p} &\not\sim \T_{x_j}, \T_{z_k}, \text{ for all } j\neq \lfloor p/2 \rfloor +1, k\neq \lceil p/2 \rceil, \label{fin1}\\
        \T_{p} &\not\sim \I^+_{x_{\lfloor p/2 \rfloor +1}},\I^-_{z_{\lceil p/2 \rceil}}, \label{fin2}\\
        \I_{p}^-&\not\sim \T_{x_{\lfloor p/2 \rfloor +1}},\label{fin3}\\
        \I_{p}^+&\not\sim \T_{z_{\lceil p/2 \rceil}}. \label{fin4}
    \end{align}
  
    Recall that $A_p$, $B_p$ and $C_p$ are linear extensions of $\T_p$. They appear in the linear extensions \eqref{compCnext1}, \eqref{compCnext2} and \eqref{compCnext4}, all in different ones.
    
    We designed our realizers such that $*$ in any $A_p(*)$ is either a $X_{\lfloor p/2\rfloor +1}^\bullet$ or $X_{\lfloor p/2\rfloor +1}^-$ but never $X_{\lfloor p/2\rfloor +1}^+$, and $*$ in any $B_p(*)$ is either a $Z_{\lceil p/2\rceil}^\bullet$ or $Z_{\lceil p/2\rceil}^+$ but never $Z_{\lceil p/2\rceil}^-$.
    It follows, by Remark \ref{remarkpartsAB}, that in the linear extension where $A_{p}$ appears, $\I^-_{p} <\T_{x_{\lfloor p/2 \rfloor +1}}$, and
    in the one where $B_{p}$ appears, $\T_{z_{\lceil p/2 \rceil}} <\I^+_{p}$. One can also note that in the linear extension where $C_{p}$ appears, $\T_{x_{\lfloor p/2 \rfloor +1}} <\T_{p}<\T_{z_{\lceil p/2 \rceil}}$.
    This proves \eqref{fin3} and \eqref{fin4}.

    We now prove \eqref{fin2}.
    In the linear extension where $X_{\lfloor p/2 \rfloor +1}^-$ appears, $\T_{p}<\I^+_{x_{\lfloor p/2 \rfloor +1}}$.
    Indeed, for $i\neq 2$, $X_i^-$ is in \eqref{compCnext1}, and either $i=1,n$ for which the assertion is trivial, or we have $\T_{2i-2}<\T_{2i-1}<\I_{x_i}^+$.
    Also $X_2^-$ is in \eqref{compCnext4} for which the assertion is trivial.
    In the one where $X_{\lfloor p/2 \rfloor +1}^+$ appears, $\T_{x_{\lfloor p/2 \rfloor +1}}<\T_{p}$.
    Indeed, for $i\neq 1,2$, $X_i^+$ is in \eqref{compCnext2} and $X_i^+<\T_{2i-2}<\T_{2i-1}$.
    We also have $X_1^+$ in \eqref{compCnext4} in which $\T_{x_1}<\T_{2n}<\T_1$, and $\X_2^+$ in \eqref{compCnext1} in which $\T_{x_2}<\T_2<\T_3$.
    This proves the part $\T_p\not\sim \I^+_{x_{\lceil p/2 \rceil}+1}$ of \eqref{fin2}.  
    Similarly, in the linear extension where $Z_{\lceil p/2 \rceil}^+$ appears, $\I^-_{z_{\lceil p/2 \rceil}}<\T_{p}$.
    Indeed, for $i\neq 1,n$, $Z_i^+$ is in \eqref{compCnext2}, and either $i=2$ for which the assertion is trivial, or we have $\I_{z_i}^-<\T_{2i},\T_{2i-1}$.
    We have that $Z_1^+$ and $Z_n^+$ appear respectively in \eqref{compCnext1} and \eqref{compCnext4} for which the assertion is trivial for both.
    In the one where $Z_{\lceil p/2 \rceil}^-$ appears, $\T_{p}<\T_{z_{\lceil p/2 \rceil}}$.
    Indeed, for $i\neq 1,n$, we have $Z_i^-$ is in \eqref{compCnext1} in which $\T_{2i-1}<\T_{2i}<Z_i^-$.
    We also have $Z_1^-$ in \eqref{compCnext4} in which $\T_{2n}<\T_{1}<\T_{z_1}$ and $Z_n^-$ in \eqref{compCnext1} in which $\T_{2n}<\T_{2n-1}<\T_{z_n}$.
    Thus proving the part $\T_p\not\sim \I^-_{z_{\lceil p/2 \rceil}}$ of \eqref{fin2}.

    It remains to prove \eqref{fin1}.
    
    For $p=1$, we want $\T_1\not\sim\T_{x_j},\T_{z_k}$ for $j,k\neq 1$.
    In \eqref{compCnext2} and \eqref{compCnext4}, we have respectively
    \begin{align*}
        &\T_{x_j},\T_{z_k}<\T_1, \quad &j\neq 1\text{ and } k\neq 1,n.\\      
        &\T_{x_n},\T_{z_n}<\T_1<\T_{x_j},\T_{z_k}, \quad &j\neq 1,n\text{ and } k\neq 1,n.
    \end{align*}
    The only incomparabilities that we still need to prove are  $\T_1\not\sim\T_{x_n},\T_{z_n}$, which follow from \eqref{compCnext1} where $\T_1<\T_{x_n},\T_{z_n}$.
    
    For $p=2$, we want $\T_2\not\sim\T_{x_j},\T_{z_k}$ for $j\neq 2$ and $k\neq 1$.
    In \eqref{compCnext2} and \eqref{compCnext4}, we have respectively
    \begin{align*}
        &\T_{x_3},\T_{z_2}<\T_2<\T_{x_j},\T_{z_k}, \quad &j\neq 2,3\text{ and } k\neq 1,3.  \\
        &\T_{x_j},\T_{z_k}<\T_2<\T_{z_2}, \quad &j\neq 2\text{ and } k\neq 1,2.
    \end{align*}
    The only incomparabilities that we still need to prove are $\T_2\not\sim\T_{x_3},\T_{z_3}$, which follow from \eqref{compCnext1} where $\T_2<\T_{x_3},\T_{z_3}$.

    For $p=2n$, we want $\T_{2n}\not\sim\T_{x_j},\T_{z_k}$ for $j\neq 1$ and $k\neq n$.
    In \eqref{compCnext1} and \eqref{compCnext2}, we have respectively
    \begin{align*}
        &\T_{x_2},\T_{z_1}<\T_{2n}<\T_{x_j},\T_{z_k},\quad &j\neq 1,2\text{ and } k\neq 1. \\
        &\T_{x_j},\T_{z_k}<\T_{2n}<\T_{z_1},\quad &j\neq 1\text{ and } k\neq 1,n.
    \end{align*}
    The only incomparability that we still need to prove is $\T_{2n}\not\sim\T_{x_2}$, which follows from \eqref{compCnext4} where $\T_{2n}<\T_{x_2}$.

    For $p=2n-1$, we want $\T_{2n-1}\not\sim\T_{x_j},\T_{z_k}$ for $j\neq n$ and $k\neq n$. In \eqref{compCnext2} and \eqref{compCnext4}, we have respectively
    \begin{align*}
        &\T_{x_j},\T_{z_k}<\T_{2n-1}<\T_{x_1},\T_{z_1},\quad &j\neq 1\text{ and } k\neq 1,n. \\
        &\T_{x_1}<\T_{2n-1}<\T_{x_j},\T_{z_k},\quad &j\neq 1,n\text{ and } k\neq n.
    \end{align*}
    The only incomparability that we still need to prove is $\T_{2n-1}\not\sim\T_{z_1}$, which follows from \eqref{compCnext1} where $\T_{z_1}<\T_{2n-1}$.
    
    For $p=2i$ with $2\leq i\leq n-1$, we want $\T_{2i} \not\sim \T_{x_j}, \T_{z_k}$ for $j\neq i+1$ and $k\neq i$. In \eqref{compCnext2} and \eqref{compCnext4}, we have respectively
    \begin{align*}
        &\T_{x_j}<\T_{2i}<\T_{x_1},\T_{x_l},\quad &1<j\leq i+1\text{ and } i+1<l. \\  
        &\T_{x_1},\T_{x_j},\T_{z_k}<\T_{2i}<\T_{x_l},\T_{z_m},\quad &i<j;\: i<k;\: l\leq i;\: m<i+1.
    \end{align*}
    The only incomparabilities that we still need to prove are $\T_{2i}\not\sim \T_{z_k}$ for $k\neq i$, which follow from \eqref{compCnext1} where
    $\T_{z_k}~<~\T_{2i}~<~\T_{z_m}$ for  $k<i$ and $i\leq m$.
    
    For $p=2i-1$ with $2\leq i\leq n-1$, we want $\T_{2i-1} \not\sim \T_{x_j}, \T_{z_k}$ for $j,k\neq i$. In \eqref{compCnext1}, \eqref{compCnext2} and \eqref{compCnext4}, we have respectively
    \begin{align*}
        &\T_{x_j},\T_{z_k}<\T_{2i-1}<\T_{x_l},\T_{z_m},\quad &j<i;\: k<i;\: i<l;\: i\leq m, \\
        &\T_{x_j}<\T_{2i-1}<\T_{x_1},\T_{x_l},\quad &1<j\leq i+1\text{ and } i+1<l, \\
        &\T_{z_k}~<~\T_{2i-1}~<~\T_{z_m},\quad &1<k<i\text{ and }m< i.
    \end{align*}

    Thus, we have $\T_{2i-1} \not\sim \T_{x_j}$, for all $j\neq i$, and
    $\T_{2i-1} \not\sim \T_{z_k}$, for all $k\neq i$.

    This proves \eqref{fin1}, which finishes the proof.
\end{proof}

The realizer for a unicycle poset that has $\CC_3$ as an underlying cycle poset is very similar. We will have the same linear extensions \eqref{compCnext1} and \eqref{compCnext4} but without the middle product, whereas \eqref{compCnext2} is slightly different; we are missing the middle product, as well as $I^-_{z_3}$ from the first parenthesis and we do not have the parenthesis after the product. But the proof is very similar to that of Proposition \ref{conjFinal}, thus we do not rewrite it.

\begin{proposition}
    Let $\CC_3$ be a cycle poset, $\TT(\CC_3)$ be a family of rooted trees indexed by $\CC_3$, and $\P$ be the unicycle poset obtained from the grafting of $\TT(\CC_3)$ on $\CC_3$.
    The following three linear orders form a realizer of $\P$:
    {\small
    \begin{align}
        &I_{z_1}^-\; I_{x_2}^-\; A_1(X_1^-) \; X_2^+\; B_2(Z_1^+)\; C_{6}\; C_{3}\; A_4(X_3^-)\; Z_2^- \; I_{z_2}^+ \; B_{5}(Z_3^\bullet) \; I_{x_3}^+\; I_{x_1}^+,\label{compC3ext1}\\
        &I_{z_2}^-\; A_3(X_2^\bullet)\; I_{x_3}^-\; X_{3}^+\; B_4(Z_2^+)\; C_2\;
        C_{5}\; A_{6}(X_1^\bullet)\; Z_3^-\; B_1(Z_1^\bullet)\; I_{z_3}^+,\label{compC3ext2}\\
        &I_{z_3}^-\; I_{x_1}^-\; X_1^+\; A_{5}(X_3^\bullet) \; B_{6}(Z_3^+)\; C_1 \; C_4\; A_2(X_2^-)\; B_3(Z_2^\bullet) \; Z_1^-\; I_{z_1}^+\; I_{x_2}^+.\label{compCn3ext3}
    \end{align}
    }
\end{proposition}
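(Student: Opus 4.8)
The plan is to reuse, essentially verbatim, the proof of \cref{conjFinal}, specialised to $n=3$. First I would check that the three words \eqref{compC3ext1}, \eqref{compC3ext2} and \eqref{compCn3ext3} are linear extensions of $\P$; this is a direct reading from the Hasse diagram (compare \cref{fig:generalunicycle} in the case $n=3$), using that each block $A_p$, $B_p$, $C_p$ is a linear extension of $\T_p$ as built in \cref{sectionTreesonCovers}, and each $W^{-},W^{+},W^{\bullet},I_w^{-},I_w^{+}$ is a piece of a linear extension of $\T_w$ coming from \cref{cor:extlinarbre}. Next, restricting the three words to the subposet considered in \cref{horizC3} (keep $\CC_3$ with its chains, forget the trees grafted on the crown vertices) recovers, by \cref{lem:inducedlinext}, the realizer of \cref{horizC3}; and restricting them to the subposet considered in \cref{vertC3} (keep the crown $\C_3$ with its vertex-trees, collapse each chain $\T_p$ to a single cover) recovers the realizer of \cref{vertC3}. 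Hence every comparability, and every incomparability internal to those two subposets, is already realized, and it only remains to prove the mixed incomparabilities between a chain-tree $\T_p$ and a vertex-tree $\T_{x_j}$ or $\T_{z_k}$; these are exactly the statements \eqref{fin1}--\eqref{fin4} of \cref{conjFinal} with $n=3$ and $p\in[6]$.

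Then I would reproduce the four arguments of that proof. For \eqref{fin3} and \eqref{fin4}: by \cref{remarkpartsAB} and the design rule that the argument of an $A_p(\cdot)$ is always an $X^{\bullet}$ or an $X^{-}$ (never an $X^{+}$) while the argument of a $B_p(\cdot)$ is always a $Z^{\bullet}$ or a $Z^{+}$ (never a $Z^{-}$), one reads directly from the word containing $A_p$ that $\I_p^{-}<\T_{x_{\lfloor p/2\rfloor+1}}$, from the word containing $B_p$ that $\T_{z_{\lceil p/2\rceil}}<\I_p^{+}$, and from the word containing $C_p$ that $\T_{x_{\lfloor p/2\rfloor+1}}<\T_p<\T_{z_{\lceil p/2\rceil}}$; this gives \eqref{fin3} and \eqref{fin4} at once. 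For \eqref{fin2} one makes the same short case distinction according to whether the vertex block attached to $x_{\lfloor p/2\rfloor+1}$ (respectively $z_{\lceil p/2\rceil}$) occurs in its $-$, $+$ or $\bullet$ flavour in each of the three words. Finally, for \eqref{fin1} one treats $p=1,2,3,4,5,6$ one at a time, producing for every forbidden pair $(\T_p,\T_{x_j})$ with $j\neq\lfloor p/2\rfloor+1$ and $(\T_p,\T_{z_k})$ with $k\neq\lceil p/2\rceil$ one word in which $\T_p$ precedes the vertex-tree and another in which $\T_p$ follows it.

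The only genuine difference from \cref{conjFinal} is bookkeeping: for $n=3$ the middle products $\prod_{i=3}^{n-1}$ and $\prod_{i=n-1}^{3}$ are empty, and \eqref{compC3ext2} is additionally trimmed, with no $I_{z_3}^{-}$ in its first parenthesis and no parenthesis after the vanished product. So the one thing I would check carefully is that, after these deletions, every mixed pair still has both orderings witnessed somewhere among \eqref{compC3ext1}--\eqref{compCn3ext3}; concretely I would tabulate, for each of the six values of $p$, the relative positions of $A_p,B_p,C_p$ and of each $X_j^{\ast},Z_k^{\ast}$, and verify that no coincidence forces a spurious comparability. I expect this finite check on a crown with six covers to be the only real content, and to go through exactly as in \cref{conjFinal}: the subcases "$p=2i,\ 2\le i\le n-1$" and "$p=2i-1,\ 2\le i\le n-1$" collapse to the single instances $p=4$ and $p=3$, while $p=1,2,5,6$ are handled as before. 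The main obstacle is thus not conceptual but ensuring that the trimming of \eqref{compC3ext2} did not destroy any needed opposite ordering.
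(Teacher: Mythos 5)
Your proposal is correct and follows exactly the route the paper intends: the paper itself omits this proof, stating only that it is "very similar to that of \cref{conjFinal}", and your plan — check the linear extensions, recover the subposets of \cref{horizC3} and \cref{vertC3} by restriction, then verify the mixed incomparabilities \eqref{fin1}--\eqref{fin4} for $p\in[6]$, paying attention to the trimming of \eqref{compC3ext2} — is precisely that adaptation. Your explicit flagging of the deleted $I_{z_3}^-$ and the vanished parenthesis as the only places where a needed opposite ordering could be lost is the right point to check.
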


\begin{proposition}
    Let $\CC_2$ be a cycle poset, $\TT(\CC_2)$ be a family of rooted trees indexed by $\CC_2$, and $\P$ the unicycle poset obtained from the grafting of $\TT(\CC_2)$ on $\CC_2$.
    The following three linear orders form a realizer of $\P$:
    \begin{align}
        A_1(X_1^-)\; X_2^-\; B_{2}(Z_1^\bullet)\; B_4^3(Z_2^\bullet)\; I_{x_1}^+\; I_{x_2}^+,\label{compC2ext1}\\
        I_{z_2}^-\; I_{z_1}^-\; A_3^2 (X_2^\bullet)\; A_4(X_1^\bullet)\; Z_2^+\; B_1(Z_1^+),\label{compC2ext2}\\
        I_{x_1}^-\; X_1^+\; C_4\; I_{x_2}^-\; X_2^+\; C_1\; C_3\; Z_2^-\; I_{z_2}^+\; C_2\; Z_1^-\; I_{z_1}^+.\label{compC2ext3}
    \end{align}
\end{proposition}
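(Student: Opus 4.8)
The plan is to run the proof of \cref{conjFinal} essentially verbatim, this time merging the realizer of \cref{horizC2} (trees grafted on the covers of $\CC_2$) with the realizer of \cref{vertC2} (trees grafted on the four vertices of $\C_2$). The first step, checking that \eqref{compC2ext1}, \eqref{compC2ext2} and \eqref{compC2ext3} are linear extensions of $\P$, is routine: reading off the picture, inside each cover $p$ the block appears in the order dictated by $A_p$, $B_p$, $C_p$ (so the $\D^-_{y^p_j}$ come before the $y^p_j$ come before the $\U^+_{y^p_j}$, and so on), and at each vertex the blocks obey \eqref{tree1}--\eqref{tree3}; hence no relation of $\P$ is reversed.

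The second step is to recover the relevant subposets via \cref{lem:inducedlinext}. Restricting the three words to the complement of the non-root vertex-grafted elements --- that is, replacing each $X_i^-, X_i^+, X_i^\bullet$ by $x_i$, each $Z_i^-, Z_i^+, Z_i^\bullet$ by $z_i$, and deleting the $\I^\pm_{x_i}, \I^\pm_{z_i}$ --- turns \eqref{compC2ext1}--\eqref{compC2ext3} verbatim into \eqref{C_2BEV1}--\eqref{C_2BEV3}, so by \cref{horizC2} we recover $\CC_2$, every cover chain $\T_p$, and all incomparabilities among the $\T_p$'s and between the $\T_p$'s and the $x_i, z_i$. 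Dually, restricting to the complement of all cover-chain elements, which amounts to stripping the wrappers $A_p, B_p, C_p$, turns the three words into \eqref{C2ext1}--\eqref{C2ext3}, so by \cref{vertC2} we recover $\C_2$ with a tree grafted on each vertex, hence all incomparabilities among $\T_{x_1}, \T_{x_2}, \T_{z_1}, \T_{z_2}$ and between these and the four roots.

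There then remain the ``cross'' incomparabilities between a cover chain $\T_p$ (with endpoints $x_{\lfloor p/2\rfloor+1} \lessdot z_{\lceil p/2\rceil}$, indices cyclic) and the vertex trees:
\begin{align*}
  &\T_p \not\sim \T_{x_j}, \T_{z_k} \text{ for } j \neq \lfloor p/2\rfloor+1,\, k \neq \lceil p/2\rceil, \qquad \T_p \not\sim \I^+_{x_{\lfloor p/2\rfloor+1}}, \I^-_{z_{\lceil p/2\rceil}},\\
  &\I^-_p \not\sim \T_{x_{\lfloor p/2\rfloor+1}}, \qquad \I^+_p \not\sim \T_{z_{\lceil p/2\rceil}}.
\end{align*}
As in \cref{conjFinal}, these rest on the design feature that the argument slot of each $A_p$ carries $x_{\lfloor p/2\rfloor+1}$ decorated by $\bullet$ or $-$ (never $+$), and the argument slot of each $B_p$ carries $z_{\lceil p/2\rceil}$ decorated by $\bullet$ or $+$ (never $-$): combined with \cref{remarkpartsAB} this yields $\I^-_p < \T_{x_{\lfloor p/2\rfloor+1}}$ in the word containing $A_p$, $\T_{z_{\lceil p/2\rceil}} < \I^+_p$ in the word containing $B_p$, and $\T_{x_{\lfloor p/2\rfloor+1}} < \T_p < \T_{z_{\lceil p/2\rceil}}$ in the word containing $C_p$. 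Using the inclusions $\I^\pm_p \subseteq \T_p$, $\I^\pm_x \subseteq \T_x$ and $\I^\pm_z \subseteq \T_z$, inspecting pairs of these three words gives the second line of the display together with the two $\I$-items of the first line, exactly as for \eqref{fin2}--\eqref{fin4} in \cref{conjFinal}. The remaining items $\T_p \not\sim \T_{x_j}, \T_{z_k}$ for non-adjacent vertices come down to a direct case check over $p \in \{1,2,3,4\}$, reading positions straight off \eqref{compC2ext1}--\eqref{compC2ext3} as for \eqref{fin1} in \cref{conjFinal}. The one fiddly point --- the step I would watch most carefully --- is the bookkeeping created by the heavy vertex-sharing of $\CC_2$ (adjacent covers share a vertex; for instance $\T_1, \T_2$ share $z_1$ while $\T_2, \T_3$ share $x_2$) together with the two nested wrappers $A_3^2$ and $B_4^3$; but since $n = 2$ this is a small finite check, and in fact the whole proposition can alternatively be confirmed simply by inspecting the three displayed words.
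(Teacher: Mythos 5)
Your proposal is correct and follows essentially the same route as the paper: check the three words are linear extensions, restrict to recover the realizers of \cref{horizC2} and \cref{vertC2}, then verify the remaining cross-incomparabilities (the same list of $\I^\pm_p$ versus $\T_{x},\T_{z}$ and $\T_p$ versus $\T_{x_j},\T_{z_k},\I^+_{x},\I^-_{z}$) using the placement of $A_p$, $B_p$, $C_p$ in three different words together with the design constraint on their arguments. The finite case check you flag at the end is exactly what the paper does, pairing the word containing $C_p$ against the ones containing $A_p$ and $B_p$.
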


\begin{proof}
    It is straightforward to check that these linear orders are linear extensions of $\P$. If we restrict these linear extensions respectively to the subposets considered in Propositions \ref{horizC2} and \ref{vertC2}, we recover the realizers of these subposets.
    Let $p\in [4]$. It suffices now to prove the following incomparabilities:
    \begin{align}
        &  \I_{p}^-\not\sim \T_{x_{\lfloor p/2 \rfloor +1}}, \label{plusmoins1}  \\
        &\I_{p}^+\not\sim \T_{z_{\lceil p/2 \rceil}}, \label{plusmoins2} \\
        &\T_p\not\sim \T_{z_{\lceil p/2 \rceil+1}},\I^+_{x_{\lfloor p/2 \rfloor+1}},\label{type1}\\
        &\T_p\not\sim \T_{x_{\lfloor p/2 \rfloor}},\I^+_{z_{\lceil p/2 \rceil}}.\label{type2}
    \end{align}
    
    Recall that $A_p$, $B_p$ and $C_p$ each appears only once in \eqref{compC2ext1}, \eqref{compC2ext2} and \eqref{compC2ext3}, all in different ones.
    One can note that, in the linear extension where $A_{p}$ appears, $\I^-_{p} <\T_{x_{\lfloor p/2 \rfloor +1}}$,
    in the one where $B_{p}$ appears, $\T_{z_{\lceil p/2 \rceil}} <\I^+_{p}$, and,
    in the one where $C_{p}$ appears, $\T_{x_{\lfloor p/2 \rfloor +1}} <\T_{p}<\T_{z_{\lceil p/2 \rceil}}$. This proves \eqref{plusmoins1} and \eqref{plusmoins2}.
    
    For \eqref{type1} and \eqref{type2}, in the case $p=1$, one only has to look at \eqref{compC2ext1} and \eqref{compC2ext2}.
    For $p\neq 1$, for the incomparabilities of \eqref{type1}, one has to look at \eqref{compC2ext1} and \eqref{compC2ext3}.
    For the incomparabilities of \eqref{type2}, one has to look at \eqref{compC2ext2} and \eqref{compC2ext3}.
\end{proof}

\begin{proposition}
    Let $\CC_1$ be a cycle poset, $\TT(\CC_1)$ be a family of rooted trees indexed by $\CC_1$, and $\P$ the unicycle poset obtained from the grafting of $\TT(\CC_1)$ on $\CC_1$.
    The following three linear orders form a realizer of $\P$:
    \begin{align}
        A_1(X^-)\; B_{2}(Z^\bullet)\; I_{x}^+, \label{compC1ext1}\\
        I_{z}^-\; A_2(X^\bullet)\; B_1(Z^+), \label{compC1ext2}\\
        I_{x}^-\; X^+\; C_1\; C_2\; Z^-\; I_{z}^+. \label{compC1ext3}
    \end{align}
\end{proposition}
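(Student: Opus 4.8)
The plan is to follow the same scheme as the proof of \cref{conjFinal}, now merging the realizer of \cref{horizC1} (trees grafted on the two chains) with that of \cref{vertC1} (trees grafted on $x$ and $z$). First I would check that \eqref{compC1ext1}, \eqref{compC1ext2} and \eqref{compC1ext3} are linear extensions of $\P$; this is immediate from the description of the blocks $A_p$, $B_p$, $C_p$ in \cref{remarkpartsAB} together with $X^-=D_x^-\,x$, $X^\bullet=D_{x,2}\,x\,U_{x,2}$, $X^+=x\,U_x^+$ and the analogous shapes for $Z$, since in each of the three words the elements of $\{x\}\sqcup\D_x^-$ precede the spines $y^p_1\lessdot\cdots\lessdot y^p_{k_p}$, which precede those of $\{z\}\sqcup\U_z^+$, and each block lists its $y^p_j$ in increasing order.

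Next I would \emph{recover} three subposets. Restricting the three words to $\T_x$ yields, block by block, the linear extensions $D_x^-\,x\,U_{x,1}\,D_x^+$, $D_{x,2}\,x\,U_{x,2}$ and $U_x^-\,D_{x,1}\,x\,U_x^+$, which are exactly the realizer of $\T_x$ from \cref{cor:extlinarbre}; symmetrically for $\T_z$. Restricting instead to $\{x,z\}\sqcup\T_1\sqcup\T_2$ collapses each of $X^-,X^\bullet,X^+$ to $x$ and each of $Z^-,Z^\bullet,Z^+$ to $z$, leaves $C_1$ and $C_2$ unchanged, and deletes the remaining $I_x^\pm,I_z^\pm$, producing precisely the realizer $A_1(x)\,B_2(z)$, $A_2(x)\,B_1(z)$, $x\,C_1\,C_2\,z$ of \cref{horizC1}. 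By \cref{lem:inducedlinext} we therefore recover $\T_x$, $\T_z$ and $\{x,z\}\sqcup\T_1\sqcup\T_2$; in particular all comparabilities and incomparabilities internal to those subposets are realized, including $\T_1\not\sim\T_2$ and all relations of $\T_1$ and $\T_2$ with $x$ and $z$.

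It then only remains to realize the incomparabilities crossing between $\T_x$, $\T_z$, $\T_1$ and $\T_2$. Writing $\T_x=\I_x^+\sqcup(\{x\}\sqcup\D_x^-)$ and $\T_z=\I_z^-\sqcup(\{z\}\sqcup\U_z^+)$, a routine case analysis (of the same kind as in \cref{conjFinal} and \cref{vertC1}) shows that everything not yet realized follows from
\[
  \I_x^+\not\sim\P\setminus\T_x,\qquad \I_z^-\not\sim\P\setminus\T_z,\qquad \D_x^-\not\sim\I_p^-,\qquad \U_z^+\not\sim\I_p^+\quad(p\in\{1,2\}).
\]
Each of these is checked as in the remark of \cref{notation}, by exhibiting the left set to the left of the right set in one linear extension and to the right in another: $I_x^+$ is a suffix of \eqref{compC1ext1}, while $I_x^-\,X^+$ is a full linear extension of $\T_x$ occurring as a prefix of \eqref{compC1ext3}, which gives $\I_x^+\not\sim\P\setminus\T_x$; dually $I_z^-$ is a prefix of \eqref{compC1ext2} and $Z^-\,I_z^+$ a suffix of \eqref{compC1ext3}, which gives $\I_z^-\not\sim\P\setminus\T_z$; inside $A_1(X^-)$ the block $\I_1^-$ lies to the left of $D_x^-$ by \cref{remarkpartsAB}, whereas in \eqref{compC1ext3} the occurrence $\D_x^-\subseteq I_x^-$ lies to the left of $C_1\supseteq\I_1^-$, hence $\D_x^-\not\sim\I_1^-$; and $\D_x^-\not\sim\I_2^-$ follows from \eqref{compC1ext1} (where $\D_x^-$ precedes $B_2(Z^\bullet)\supseteq\I_2^-$) and \eqref{compC1ext2} (where $\I_2^-$ precedes $X^\bullet\supseteq\D_x^-$ inside $A_2(X^\bullet)$). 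The two relations $\U_z^+\not\sim\I_p^+$ are the exact duals, read off from the positions of $Z^+$, $Z^\bullet$ and of $C_1,C_2$. The only point requiring care — exactly as in \cref{conjFinal}, and the sole place where anything could go wrong — is that every $A_p$ here carries an $X^-$ or an $X^\bullet$ but never an $X^+$, and every $B_p$ carries a $Z^+$ or a $Z^\bullet$ but never a $Z^-$, so that \cref{remarkpartsAB} does place $\I_p^-$ entirely to the left of the placeholder and the full tree $\T_z$ (resp.\ $\T_x$) entirely to its right; with this in hand, the argument closes exactly as for \cref{conjFinal}.
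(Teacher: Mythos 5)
Your proof is correct and follows the paper's approach for this proposition: check that the three words are linear extensions, recover the subposets of \cref{horizC1} and \cref{vertC1} by restriction, and then verify the remaining cross incomparabilities between $\T_x,\T_z$ and $\T_1,\T_2$. You are in fact more careful than the paper's own proof, which lists only $\I_x^+,\I_z^-\not\sim\T_1,\T_2$ as what remains and silently omits $\D_x^-\not\sim\I_p^-$ and $\U_z^+\not\sim\I_p^+$ (pairs lying in neither of the two restricted subposets); your explicit verification of these via \cref{remarkpartsAB} and the placement of $X^-,X^\bullet,Z^+,Z^\bullet$ closes that small gap.
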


\begin{proof}
    It is straightforward to check that these linear orders are linear extensions of $\P$ (see Figure \ref{fig:unicycleC1}).
    If we restrict these linear extensions respectively to the subposets considered in Propositions \ref{horizC1} and \ref{vertC1}, we recover the realizers of these subposets.
    It suffices now to prove the following incomparabilities
    \begin{align*}
        \I_x^+,\I_z^- &\not\sim \T_1,\T_2.
    \end{align*}
    With \eqref{compC1ext1} and \eqref{compC1ext3} we obtain respectively $\T_1,\T_2<\I_x^+$ and $\T_x<\T_1,\T_2$. Thus $\I_x^+\not\sim \T_1,\T_2$. Similarly for $\I_z^- \not\sim \T_1,\T_2$ with \eqref{compC1ext2} and \eqref{compC1ext3}.
\end{proof}

\section{Beyond the unicycle posets}

In this paper, we obtained specific realizers of the tree posets, that could maybe help answering the following questions.

\begin{question}
For a poset $\P$ of dimension $k\geq 3$, does grafting tree posets onto $\P$ changes the dimension?    
\end{question}

\begin{question}
If $\P$ contains exactly two disjoint cycles, is its dimension at most 3?
\end{question}

More generally, we can ask:

\begin{question}
If the cycles of $\P$ are disjoint, is the dimension of $\P$ at most 3?    
\end{question}

\bibliographystyle{alpha}
\bibliography{sample}

\end{document}